  \newcommand{\calC}{\mathcal{C}}
  \newcommand{\calG}{\mathcal{G}}
  \newcommand{\calS}{\mathcal{S}}
  \newcommand{\CC}{\mathbb{C}}
  \newcommand{\RR}{\mathbb{R}}
  \newcommand{\ZZ}{\mathbb{Z}}
  \newtheorem{theorem}{Theorem}[section]
  \newtheorem{proposition}[theorem]{Proposition}
  \newtheorem{corollary}[theorem]{Corollary}
  \newtheorem{lemma}[theorem]{Lemma}
  \theoremstyle{definition}
  \newtheorem{claim}[theorem]{Claim}
  \newtheorem*{claim*}{Claim}
  \newtheorem*{question*}{Question}
  \newtheorem*{answer*}{Answer}
  \newtheorem*{application*}{Application}
  \theoremstyle{remark}
  \newtheorem*{remark*}{Remark}
  \DeclareMathOperator{\Mod}{Mod}
  \DeclareMathOperator{\Ext}{Ext}
  \DeclareMathOperator{\Hyp}{Hyp}
  \DeclareMathOperator{\I}{i}
  \DeclareMathOperator{\Area}{Area}
  \newcommand{\T}{\ensuremath{\mathcal{T}}\xspace} 
  \newcommand{\MF}{\ensuremath{\mathcal{MF}}\xspace} 
  \newcommand{\PMF}{\ensuremath{\mathcal{PMF}}\xspace} 
  \newcommand{\ML}{\ensuremath{\mathcal{ML}}\xspace} 
  \newcommand{\PML}{\ensuremath{\mathcal{PML}}\xspace}  
 \newcommand{\PC}{\ensuremath{\mathcal{PC}}\xspace}  
  \newcommand{\Teich}{{Teichm\"uller }} % A Nazi dude
  \newcommand{\param}{{\mathchoice{\mkern1mu\mbox{\raise2.2pt\hbox{$
  \centerdot$}}
  \mkern1mu}{\mkern1mu\mbox{\raise2.2pt\hbox{$\centerdot$}}\mkern1mu}{
  \mkern1.5mu\centerdot\mkern1.5mu}{\mkern1.5mu\centerdot\mkern1.5mu}}}
  \renewcommand{\setminus}{{\smallsetminus}}
  \newcommand{\from}{\colon\thinspace} 
  \newcommand{\dT}{{d_T}}
  \newcommand{\balpha}{{\overline \alpha}}
  \numberwithin{equation}{section}
\begin{document}

%section{Title and abstract}

  \title[ Teichm\"uller disks with small limit sets in $\mathcal PMF$]   
  {Teichm\"uller disks with small limit set in $\mathcal PMF$}
  
  %authors

  \author{Anna Lenzhen}
  \address{IRMAR, 
Universit\'e de Rennes 1,
Campus de Beaulieu,
35042 Rennes Cedex, France}
\email{anna.lenzhen@univ-rennes1.fr}

  \date{\today}
%  \subjclass[2010]{57M50 (32G15, 37D40, 37A25)}

\begin{abstract} 
 We study limit sets of  \Teich disks in the Thurston boundary of \Teich space of a closed surface $S$ of genus at least 2.  It is well known that almost every \Teich geodesic ray converges to a point on the boundary. We show that unlike \Teich rays, \Teich disks with smallest possible limit sets are extremely rare.
\end{abstract}
  
  \maketitle
\section{Introduction}  

For a closed oriented surface of genus at least two $S$, the Teichm\"uller space $T(S)$ is topologically an open ball. Thurston compactified it by adding the sphere of projective measured foliations $\PMF$. Equipped with the \Teich metric $d_T$, it  is a complete totally geodesic metric space which is not hyperbolic in the sens of Gromov, but which has many properties of a hyperbolic space. Its group of isometries is the mapping class group $MCG(S)$ \cite{royden:AT}, whose  action  extends continuously to the Thurston boundary $\PMF(S)$.

%Given a \Teich geodesic ray, the set of its accumulation points in the boundary can be more than a point \cite{lenzhen:GL}. In fact, there are rays whose limit set is a circle (see \cite{rafi:LSII}) or a simplex of dimension $d= g-1$ (see \cite{lenzhen:DDL}).  On the other hand these rays are rare, since a generic geodesic ray converges to a unique point on the boundary (see \cite{kerckhoff:EBF} and \cite{masur:CG}). 

A Teichm\"uller disk is an isometric embedding of  Poincar\'e disk to $(T(S), d_T)$. A natural question is what its limit set in $\PMF(S)$ can  look like.
Clearly,  the limit set of a disk contains that of every geodesic ray in the disk. A  generic geodesic ray converges to a unique point on the boundary (see \cite{kerckhoff:EBF} and \cite{masur:CG}), on the other hand there are \Teich rays that do not converge, whose limit set is a circle (see \cite{rafi:LSII}) or a simplex of dimension $d= g-1$ (see \cite{lenzhen:DDL}). Hence we do not expect the disk limit set to be something particularly nice unless we put extra conditions on the disk itself.  The goal is to understand what is the smallest possible limit set a \Teich disk can have, how likely it is for a \Teich disk to have this limit set and how the limit set looks in general.

Let $X\in T(S)$ and $q$ a holomorphic quadratic differential on $X$. Let $D(X,q)\subset T(S)$ be the \Teich disk defined by the couple $(X,q)$.
We will denote $\Lambda(X,q)$ the limit set of  $D(X,q)$ in  $\PMF(S)$. 

Define $C(X,q)$ to be the set of projective classes of the vertical foliation of $e^{2i\theta}q$ for $\theta\in \RR/\pi \ZZ$. Topologically $C(X,q)$ is a circle in $\PMF(S)$. 
We first observe that
 \begin{lemma} \label{contains}$\Lambda(X,q)$  contains the set $C(X,q)$. 
 \end{lemma}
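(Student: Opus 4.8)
The plan is to realize each point of $C(X,q)$ as the forward endpoint of a \Teich geodesic ray lying inside $D(X,q)$, first for a dense set of directions, and then to recover the remaining directions by passing to the closure. For $\theta\in\RR/\pi\ZZ$ write $\mathcal F_\theta$ for the vertical foliation of $e^{2i\theta}q$, so that $C(X,q)=\{[\mathcal F_\theta]:\theta\in\RR/\pi\ZZ\}$. First I would record the correspondence between directions in the disk and rotated differentials: the marked surface $(X,e^{2i\theta}q)$ is obtained from $(X,q)$ by letting the rotation $r_\theta\in \SO(2)\subset \SL(2,\RR)$ act on the flat structure, so it lies in $D(X,q)$, and applying the \Teich geodesic flow $g_t$ to it sweeps out the ray in $D(X,q)$ issuing from $X$ in direction $\theta$. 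This ray is exactly the \Teich geodesic determined by the quadratic differential $e^{2i\theta}q$, whose contracting (vertical) foliation is $\mathcal F_\theta$.

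Next I would invoke a genericity and a convergence input. By the Kerckhoff–Masur–Smillie theorem, for almost every $\theta$ the foliation $\mathcal F_\theta$ is uniquely ergodic; and for such $\theta$ Masur's convergence criterion (cf.\ \cite{masur:CG}, \cite{kerckhoff:EBF}) identifies the forward limit of the ray, namely $g_t\cdot(X,e^{2i\theta}q)\to[\mathcal F_\theta]$ in $\PMF(S)$ as $t\to\infty$. Since this ray is contained in $D(X,q)$, its limit is a limit point of the disk, so $[\mathcal F_\theta]\in\Lambda(X,q)$ for almost every $\theta$, hence for a dense set of $\theta\in\RR/\pi\ZZ$.

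To finish I would use that $\Lambda(X,q)$ is closed. The map $\theta\mapsto[\mathcal F_\theta]$ is continuous $\RR/\pi\ZZ\to\PMF(S)$ — this is implicit in the already-noted fact that $C(X,q)$ is a topological circle, and concretely follows from the continuous dependence of $e^{i\theta}\sqrt q$, and hence of the transverse measures $i(\param,\mathcal F_\theta)$ of all simple closed curves, on $\theta$, together with the fact that the topology of $\MF(S)$ is that of pointwise convergence of intersection numbers. Granting continuity, the set $\{[\mathcal F_\theta]:\mathcal F_\theta\text{ uniquely ergodic}\}$ is dense in $C(X,q)$; as it sits inside the closed set $\Lambda(X,q)$, its closure, and therefore all of $C(X,q)$, lies in $\Lambda(X,q)$.

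\textbf{Main obstacle.} The delicate point is that one cannot argue directionwise for \emph{every} $\theta$: when $\mathcal F_\theta$ fails to be uniquely ergodic, the ray in direction $\theta$ need not converge to $[\mathcal F_\theta]$ at all, and its limit set may consist of entirely different foliations (the ergodic components). Thus the non-generic points of $C(X,q)$ are recovered only indirectly, through the combination of continuity of $\theta\mapsto[\mathcal F_\theta]$, density of the uniquely ergodic directions, and closedness of $\Lambda(X,q)$. The remaining ingredients — the identification of the direction-$\theta$ ray with the geodesic of $e^{2i\theta}q$ and the continuity of the foliation map — are standard but should be stated carefully so that the closure argument is rigorous.
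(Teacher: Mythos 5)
Your proposal is correct and follows essentially the same route as the paper's proof: decompose $D(X,q)$ into the rays of $e^{2i\theta}q$, use Kerckhoff--Masur--Smillie together with Masur's convergence theorem to place $[\mathcal F_\theta]\in\Lambda(X,q)$ for the full-measure set of uniquely ergodic $\theta$, and conclude by closedness of $\Lambda(X,q)$. Your explicit verification of the continuity of $\theta\mapsto[\mathcal F_\theta]$ (needed to pass from almost-every $\theta$ to density in $C(X,q)$) is a point the paper leaves implicit, and your remark that non--uniquely-ergodic directions cannot be handled raywise correctly identifies why the closure step is essential.
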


 We would like to know if there are \Teich disks with the  limit set satisfying
 $$\Lambda(X,q)=C(X,q)$$
 and if yes, how common this property is.   
 
 Our main result is that there are very few such disks.
  \begin{theorem}\label{small} For every $g\geq 2$, up to the action of the mapping class group,  there are at most finitely many \Teich disks in $T(S_g)$  whose limit set is a circle. 
 \end{theorem}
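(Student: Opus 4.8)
The plan is to show that the equality $\Lambda(X,q)=C(X,q)$ is equivalent to a strong dynamical rigidity of the flat surface $(X,q)$ --- that its $\SL(2,\RR)$-orbit is bounded in the moduli space of quadratic differentials --- and then to deduce finiteness from the resulting classification. Parametrize the disk $D(X,q)$ by a direction $\theta\in\RR/\pi\ZZ$ (the boundary of the Poincar\'e disk) and Teichm\"uller time $t$, so that the ray in direction $\theta$ is the forward geodesic whose vertical foliation is $F_\theta$, and $C(X,q)$ is traced out by the projective classes $[F_\theta]$. By Masur's criterion, whenever $F_\theta$ is uniquely ergodic the ray converges to $[F_\theta]\in C(X,q)$; hence the radial limit set coming from uniquely ergodic directions is automatically the uniquely ergodic part of $C(X,q)$, and the whole problem reduces to understanding (i) the non--uniquely-ergodic directions and (ii) non-radial accumulation.

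First I would isolate the mechanism by which limit points escape $C(X,q)$. If $F_\theta$ is minimal but not uniquely ergodic, or if $\theta$ is completely periodic with at least two cylinders, then the cone of transverse measures on $F_\theta$ is a positive-dimensional simplex, of which at most the single flat class lies on $C(X,q)$. In the periodic case a direct computation shows that the radial limit of the geodesic is the \emph{barycentric} weighting of the core curves, which already leaves $C(X,q)$ whenever the cylinder widths differ; more generally, by perturbing $\theta$ off the special direction and letting the degenerating curves pinch at different relative rates, one forces the disk to accumulate on points of this simplex transverse to $C(X,q)$. Since a positive-dimensional simplex of measured foliations is not an arc of the circle $C(X,q)$, the hypothesis $\Lambda(X,q)=C(X,q)$ forces \emph{every} direction to be uniquely ergodic and rules out completely periodic directions with more than one cylinder.

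Next I would upgrade this to boundedness and rigidity. Combining the absence of non--uniquely-ergodic and multi-cylinder directions with a non-divergence analysis of the non-radial sequences should show that the forward $\SL(2,\RR)$-orbit of $(X,q)$ cannot leave every compact subset of moduli space, so its closure is a compact affine invariant submanifold. A positive-dimensional such submanifold would contain a completely periodic surface, hence a cylinder whose core curve is pinched by the flow in its own direction, contradicting compactness; thus the closure is a single closed orbit, i.e. $(X,q)$ is a Veech surface, and boundedness forces its Veech group to have no parabolics, that is, to be cocompact (the associated Teichm\"uller curve is compact). Conversely, for such a surface the Veech dichotomy makes every direction uniquely ergodic, every ray converges into $C(X,q)$, and boundedness excludes extra non-radial accumulation, giving $\Lambda(X,q)=C(X,q)$; this confirms the characterization. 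Finiteness then follows since a genus-$g$ surface has only finitely many strata of quadratic differentials, and in each stratum the compact Teichm\"uller curves form a finite set up to the action of $\MCG$, being rigid, algebraically defined objects governed by the available finiteness results for (primitive, compact) Teichm\"uller curves.

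The step I expect to be the main obstacle is the second one: proving that $\Lambda(X,q)=C(X,q)$ genuinely forces boundedness of the orbit. The delicate issue is the non-radial accumulation --- one must control sequences $(\theta_n,t_n)$ with $t_n\to\infty$ and $\theta_n$ approaching a special direction, whose limits could a priori still land on $C(X,q)$, and this requires quantitative estimates on how short curves emerge along the geodesic flow near completely periodic and minimal non--uniquely-ergodic directions, matched against the weighted multicurves already present in $C(X,q)$. The passage from a bounded orbit to a single closed orbit (excluding positive-dimensional compact invariant manifolds) and the final finiteness of compact Teichm\"uller curves are the remaining points where external structural results must be invoked.
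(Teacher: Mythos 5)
Your proof breaks at the step you yourself flag as central, but in a more basic way than you anticipate: the claim that $\Lambda(X,q)=C(X,q)$ ``forces every direction to be uniquely ergodic and rules out completely periodic directions with more than one cylinder'' is false, and the rest of the argument inherits the error. By Masur's theorem every flat surface has a dense set of directions containing a cylinder (in particular two transverse ones), so no surface of genus $g\geq 2$ can avoid multi-cylinder periodic directions altogether; a hypothesis-driven argument cannot legislate them away. What the barycenter computation actually yields is weaker and different: if the ray in a Strebel direction $\theta$ with vertical foliation $\sum_i h_i\gamma_i$ must converge to $[(F_\theta,\mu_\theta)]$ (which Lemma 4.1 of the paper extracts from $\Lambda(X,q)=C(X,q)$), then since the Strebel ray converges to the barycenter $[\sum_i\gamma_i]$, the conclusion is that the \emph{heights} $h_i$ are all equal --- not that $k=1$. (Note also you say ``widths''; the relevant weights are the cylinder heights.) The paper's Corollary 4.2 then shows the surface is tiled by parallelograms, i.e.\ the $\SL(2,\RR)$-orbit contains a square-tiled surface with this balanced-heights property in every periodic direction; multi-cylinder directions are entirely compatible with a circle limit set.

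Because of this, your proposed rigidity characterization --- bounded $\SL(2,\RR)$-orbit, hence a Veech group with no parabolics and a compact Teichm\"uller curve --- is incompatible with the actual examples: the paper's last section exhibits normal origamis (e.g.\ the Herrlich example) with $\Lambda(X,q)=C(X,q)$, and these are square-tiled, so their Veech groups are commensurable to $\SL(2,\ZZ)$, contain parabolics, and their orbits are unbounded (pinching any cylinder direction leaves every compact set in moduli space). Your route would therefore conclude that the class of such disks is empty (compact Teichm\"uller curves are not even known to exist), contradicting the theorem's nonvacuousness, and your final finiteness step invokes finiteness results for the wrong class of objects. The paper's finiteness mechanism is instead combinatorial: among square-tiled surfaces with balanced heights, normalizing to the minimal number $N$ of unit squares forces every periodic direction to have cylinder height at most $1$, while Vorobets' theorem produces a cylinder of height at least $\sqrt{N}/(m2^{2^{4m}})$ with $m$ bounded by the genus; hence $N\leq m^2 2^{2^{4m+1}}$ and there are finitely many orbits per genus. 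If you want to salvage your outline, replace ``all directions uniquely ergodic'' by ``balanced heights in every periodic direction'' and replace the orbit-boundedness/compact-curve dichotomy by a quantitative bound of this Vorobets type.
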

 We should mention that the set of these special \Teich disks is not empty, we show it by  giving some examples in the last section of the paper.

So, unless the flat surface $(X,q)$ is very special, the  set $\Lambda(X,q)$ properly contains $C(X,q)$. In particular, for the \Teich disk of $(X,q)$ to have the smallest possible limit set, it is not enough that $(X,q)$ is a Veech surface. This fact was pointed out by C. Leininger and A. Kent already in  \cite{leininger:GV}. On the other hand, we will see later that for $\Lambda(X,q)$ to be a circle, $(X,q)$ has to be tiled by parallegrams, i.e. the $SL(2,\RR)$--orbit  of $(X,q)$ has to contain a square-tiled surface, plus have  some additional property.

 We can nevertheless say something about limit sets of \Teich disks of Veech surfaces in general. Recall that any measured foliation $(F,\mu)$ defines a simplex in $\PMF$ whose interior consists of all projective measured foliations topologically equivalent to $(F,\mu)$. We will denote this simplex by $\Delta(F,\mu)$ and its interior by $\Delta^{\mathrm{o}}(F,\mu)$. Now fix $(X,q)$ and denote
 $$M(X,q)=C(X,q)\cup\left(\underset{\theta\in \RR/\pi\ZZ}{\cup}\Delta^{\mathrm{o}}(F_\theta,\mu_\theta)\right)$$
 where $(F_\theta,\mu_\theta)$ is the vertical measured foliation of the quadratic differential $e^{2i\theta}q$ on $X$.

 \begin{theorem} \label{Veech} Let $(X,q)$ be a Veech surface.
 The limit set of the \Teich disk defined by $(X,q)$ satisfies
 $$\Lambda(X,q)\subset M(X,q).$$
 \end{theorem}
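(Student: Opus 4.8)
The plan is to read off the limit set from the straight-line dynamics of the flat structure and then to invoke the \emph{Veech dichotomy}: on a Veech surface every direction is either uniquely ergodic or completely periodic, and the periodic directions are precisely the fixed directions of the parabolic elements of the (lattice) Veech group. Let $\xi\in\Lambda(X,q)$, so $\xi=\lim_n Y_n$ in $\PMF$ with $Y_n\in D(X,q)$ leaving every compact set. Since the disk is isometric to $\HH^2$, whose visual compactification is compact, I may pass to a subsequence so that $Y_n$ converges to a point $\eta$ of the visual boundary $\partial_\infty\HH^2$; identifying $\partial_\infty\HH^2$ with the circle of directions of geodesic rays issuing from the basepoint, $\eta$ singles out a direction $\alpha$ on $(X,q)$, with associated straight-line foliation $F_\alpha$. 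The first structural step is to show that $\xi$ is carried by $F_\alpha$, i.e.\ $\xi\in\Delta(F_\alpha,\mu_\alpha)$. Heuristically this holds because, as the flat metric is stretched in the $\alpha$-direction, the flat length of any test curve $\gamma$ comes to be dominated by its transverse measure with respect to $F_\alpha$, so that $i(\xi,\gamma)$ only sees the components of $F_\alpha$. Making this precise for an arbitrary sequence $Y_n\to\eta$ (rather than for the radial geodesic ray toward $\eta$) requires comparing $Y_n$ with that ray, and controlling the rate at which the expanding direction settles to $\alpha$.

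With $\xi\in\Delta(F_\alpha,\mu_\alpha)$ in hand, I apply the dichotomy to $\alpha$. If $\alpha$ is uniquely ergodic, then $F_\alpha$ is minimal and carries a unique transverse measure up to scale, so $\Delta(F_\alpha,\mu_\alpha)=\{[F_\alpha]\}$; here $\eta$ is a conical limit point of the Veech group, the radial ray toward $\eta$ is recurrent, and Masur's convergence theorem \cite{masur:CG} forces $\xi=[F_\alpha]\in C(X,q)$. If instead $\alpha$ is periodic, the surface decomposes in direction $\alpha$ into cylinders $C_1,\dots,C_k$ with core curves $c_1,\dots,c_k$, the simplex $\Delta(F_\alpha,\mu_\alpha)$ is the one spanned by the $[c_i]$, and $\xi=\sum_i a_i[c_i]$ with all $a_i\ge 0$. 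In either case $\xi$ already lies in the closed set $C(X,q)\cup\bigcup_\theta\Delta(F_\theta,\mu_\theta)$.

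The crux, and the step I expect to be the main obstacle, is to upgrade the periodic case from the closed simplex to the \emph{open} simplex $\Delta^{\mathrm{o}}(F_\alpha,\mu_\alpha)\subset M(X,q)$, i.e.\ to prove $a_i>0$ for every $i$. This is genuinely necessary, since a proper face of $\Delta(F_\alpha,\mu_\alpha)$ is a weighted foliation supported on only some of the cylinders, and such a foliation lies in general in neither $C(X,q)$ nor any other open simplex; so the theorem really asserts that the limit never drops onto these faces. The mechanism I would use is that the cores $c_1,\dots,c_k$ are all parallel and are pinched \emph{simultaneously} as one approaches the cusp of the quotient $\HH^2$ associated to $\eta$: their flat lengths along the approach remain comparable, with ratios governed by the cylinder circumferences and unaffected by the parabolic shearing internal to the cylinders, so that no single core can be forgotten in the limit. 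Turning this into a proof means establishing a uniform comparability $\ell_{Y_n}(c_i)\asymp\ell_{Y_n}(c_j)$ over \emph{all} ways of entering the cusp, which amounts to controlling the interplay between the shearing inside the cylinders, the slow settling of the expanding direction to $\alpha$, and the expansion factor $t_n\to\infty$, in such a way that every core stays short. This uniform estimate is the technical heart of the argument and is exactly what excludes the proper faces of the periodic simplices, yielding $\Lambda(X,q)\subset M(X,q)$.
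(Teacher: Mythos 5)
Your skeleton matches the paper's at the top level (zero intersection with a limiting boundary direction, Veech dichotomy, then excluding the proper faces of the periodic simplices), and you correctly locate the crux at positivity of all weights $a_i$. But the mechanism you propose for that crux does not work. The ``uniform comparability'' $\ell_{Y_n}(c_i)\asymp\ell_{Y_n}(c_j)$ that you call the technical heart is in fact automatic and carries no information: the cores $c_i$ are parallel, so every element of $SL(2,\RR)$ multiplies their flat lengths by the same factor, and the ratio $\ell_Y(c_i)/\ell_Y(c_j)$ is \emph{constant} on all of $D(X,q)$ --- on any flat surface, Veech or not. Since the conclusion of the theorem fails for general non-Veech surfaces (there are rays whose limit set is an entire closed simplex, faces included, \cite{lenzhen:DDL,rafi:LS}), flat-length comparability cannot be what excludes the faces. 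The point is that weights in the Thurston limit are read off from ratios of \emph{hyperbolic} lengths of transverse curves, and near the cusp these are governed by collar widths $\omega(\Hyp_{Y_n}(c_i))\approx\log\bigl(1/\Hyp_{Y_n}(c_i)\bigr)$ and twisting contributions of the form $|k_n n_i|\cdot\Hyp_{Y_n}(c_i)$; a core is ``forgotten'' when its collar/twisting contribution is of lower order than the others', which comparable flat circumferences do not preclude. What is actually needed, and what the paper proves in Lemma \ref{boundedratio}, is that $\Hyp_Y(\gamma_i)/\Hyp_Y(\gamma_j)$ is bounded over the whole disk; that proof genuinely uses the Veech hypothesis through the Smillie--Weiss bounded ratio of saddle connections (Theorem \ref{SW}), the resulting thick-thin control (Lemma \ref{thickthin}), Rafi's flat-versus-hyperbolic comparison in the thick part, and collar--twisting estimates in the thin part. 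Even granting that bound, one is not done: converting bounded hyperbolic-length ratios into strictly positive weights is the content of Proposition \ref{onthenose}, proved by pulling $X_n$ back by powers of the affine multi-twist into a fundamental domain and analyzing separately the regimes where $\Hyp_{Y_n}(\gamma_i)$ stays bounded, tends to $0$, or tends to $\infty$. None of this analysis appears in your proposal.

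Your first structural step, $\xi\in\Delta(F_\alpha,\mu_\alpha)$, is also left as an acknowledged sketch (``controlling the rate at which the expanding direction settles to $\alpha$''), but here no delicate comparison with the radial ray is needed: the paper's Lemma \ref{intersection_zero} yields $\I\bigl(\xi,(F_\alpha,\mu_\alpha)\bigr)=0$ for an arbitrary sequence, by a geodesic-currents argument --- the vertical foliations of the segments $[X,Y_n]$ converge to $(F_\alpha,\mu_\alpha)$ along your subsequence by continuity of $\theta\mapsto(F_\theta,\mu_\theta)$, and the boundedness of $\Hyp_{X_n}(\nu_n)\leq C\sqrt{\Ext_{X_n}(\nu_n)}$, with extremal length non-increasing along the segment, kills the intersection number in the limit. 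In the periodic case, zero intersection with $\sum_i c_i\gamma_i$ (all $c_i>0$) places $\xi$ in the closed simplex simply because the complement of the $\gamma_i$ is a union of annuli; in the uniquely ergodic case it forces $\xi=[F_\alpha]$ directly, with no appeal to conical limit points or recurrence. So the architecture is right and step one is repairable by the currents lemma, but the exclusion of the proper faces --- the actual content of the theorem --- requires the hyperbolic-length ratio bound and the twisting analysis, not the flat-length comparability you propose.
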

The result stated above says that any accumulation point of $D(X,q)$ is topologically equivalent to a point in $C(X,q)$ and in the case of a not uniquely ergodic measured foliation, all the components have positive weight. Note that if $(X,q)$ is not Veech, the statement above does not hold in general, since it does not generally hold even for a \Teich ray. Indeed there are examples (\cite{lenzhen:DDL,rafi:LS}) of rays where the limit set is the entire simplex, boundary included. 

\paragraph{\textbf{Acknowledgements}} The author thanks Duc-Manh Nguyen, Bram Petri and Juan Souto for helpful conversations and encouragement.  The author gratefully acknowledges support from ANR grant MoDiff.
 
 \section{Preliminaries} In this section we recall some relevant definitions and facts from \Teich theory. For more details we refer the reader to \cite{abikoff:TT}, \cite{farc:MCG}, \cite{it:Teich},etc.
 \subsection{\Teich space}

Let $S$ be a closed surface of genus $g\geq 2$. 
 The \Teich space $T(S)$ is the space of marked complex structures on $S$
  up to isotopy.  By the uniformization theorem, the space $T(S)$ can be viewed as a space of finite area, complete, hyperbolic
  metrics on $S$ up to isotopy.
  
  We will be working with the  \Teich metric on $T(S)$. 
  Given  $X, Y \in T(S)$, the \emph{\Teich distance}
  between them is defined to be \[ \dT(X,Y) = \frac{1}{2} \inf_f \log
  K(f),\] where $f \from X \to Y$ is a $K(f)$--quasi-conformal homeomorphism
  preserving the marking. (See \cite{gardiner:QT} and \cite{hubbard:TT} for
  background information.) Geodesics in this metric are called \Teich geodesics, and we now recall a
  natural way to describe them.

 Let $X\in T(S)$ and $q=q(z)dz^2$ be a quadratic differential on $X$. There
 exists a  \emph{natural parameter} $\zeta=\xi+i\eta$, 
which is defined away from its singularities as
$$\zeta(w)=\int_{z_0}^{w}\sqrt{q(z)}\, dz.$$
In these coordinates, we have $q=d\zeta^2$. 
The lines $\xi=const$ with transverse measure $|d\xi|$ define the 
\textit{vertical} measured foliation, associated to $q$. Similarly, 
the \textit{horizontal} measured foliation is defined by $\eta=const$  and 
$|d\eta|$.  The transverse measure of an arc $\alpha$ with respect to 
$|d\xi|$, denoted by $h_q(\alpha)$, is called the \textit{horizontal length} of 
$\alpha$. Similarly, the \textit{vertical length} $v_q(\alpha)$ is the measure 
of $\alpha$ with respect to $|d\eta|$.

Given a marked Riemann surface
$X_0$ and a quadratic differential $q$ on $X_0$, we can obtain a
$1$--parameter family of quadratic differentials $q_t$ from $q$ 
so that, for $t\in \mathbb R$, if $\zeta=\xi+i\eta$ are natural coordinates for $q$, 
then $\zeta_t=e^{t} \xi+i e^{-t} \eta$ are natural coordinates for $q_t$. 
Let  $X(q,t)$ be the conformal structure associated to $q_t$. Then 
$\calG:\mathbb R\to T(S)$ which sends $t$ to $X(q,t)$, is a \Teich geodesic. Most of the time we will  work with the part
$\calG:\mathbb R_{+}\to T(S)$. We will refer to it as \textit{\Teich geodesic ray} $X_t$ based at $X_0$ and defined by $q$.

 A quadratic differential is called \textit{Strebel differential} if it decomposes the surface into 
 cylinders swept out by vertical trajectories. If $\gamma_i,\, i=1,\ldots,k$ are the core curves of the cylinders,  
 we simply denote the vertical foliation $(F,\mu)$ of $q$ by  $(F,\mu)=\sum_{i=1}^k h_i\gamma_i$.
 Here the weight $h_i$ is the height of the cylinder about $\gamma_i$ in the flat metric defined by $q$. We we refer any two such curves $\gamma_i$ and $\gamma_j$ as
 \textit{parallel curves.}

%As mentioned above, a quadratic differential $q$ on a Riemann surface $X$ 
%defines a  pair or transverse measured foliations. In fact, by a theorem of J. Hubbard 
%and H. Masur \cite{masur:QDF}, $q$ is defined uniquely by $X$ and its vertical
%measured foliation. More precisely, for any $X\in \calT(S)$ and
%a measured foliation $\bnu$ on $S$, there is a unique quadratic differential $q$
%on $X$ whose vertical foliation is measure equivalent to (a positive real multiple of) $\bnu$.  

 \subsection{Lengths, intersection numbers, geodesic currents and Thurston boundary } 
 \subsection*{Lengths} There are three notions of length of a curve that will be useful to us.
 By a \textit{curve} on $S$ we  mean a free homotopy class of non-trivial non-peripheral  closed 
 curve on $S$. We will denote $\calS=\calS(S)$ the set of simple curves on $S$. 
 
 Let $X\in T(S)$.  Every curve $\alpha$ has a unique  geodesic representative in the hyperbolic metric of $X$, and we will
 denote its length by $\Hyp_X(\alpha)$. We call it the \textit{hyperbolic length }of $\alpha$ on $X$. If $\alpha$ is a set of curves, then $\Hyp_X(\alpha)$ is  the sum of the lengths 
 of the geodesic representatives of curves in $\alpha$. 
 By a \textit{short marking } of $X$ we mean a collection $\balpha=\{\alpha_1,\alpha_2,\ldots,\alpha_{6g-6}\}$ of simple curves so that 
$\balpha$ fills $X$ (that is, every simple curve has non-zero intersection number with some $\alpha_i$) and $\Hyp_X(\balpha)$ is smallest possible. 
 \begin{lemma}[Collar Lemma{\cite{buser:GSC}}]

    For any hyperbolic metric $X$ on $S$, if $\alpha$ is a  geodesic curve
    with $\Hyp_X(\alpha) = \epsilon$, then the regular neighborhood $U(\alpha)$
    of $\alpha$ with width $\omega(\epsilon)$ where
     \begin{equation}\label{width collar}
      \omega(\epsilon) = \sinh^{-1} \left(
    \frac{1}{\sinh(\epsilon)} \right),
    \end{equation}
     is an embedded annulus. 
     %If $\beta$ is a geodesic curve disjoint from $\alpha$, then
    %$U_X(\beta)$ and $U_X(\alpha)$ are disjoint.
  \end{lemma}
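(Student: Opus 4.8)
The plan is to pass to the universal cover $\HH \to X = \HH/\Gamma$, where $\Gamma\cong\pi_1(S)$ is the torsion-free Fuchsian group uniformizing $X$, and to reduce the assertion that the collar embeds to a lower bound on the distance in $\HH$ between distinct lifts of $\alpha$. I would represent the free homotopy class of $\alpha$ by the primitive hyperbolic element $g\in\Gamma$ whose axis $A$ is normalized to be the imaginary axis; then $g(z)=e^{\epsilon}z$ has translation length $\epsilon=\Hyp_X(\alpha)$ along $A$. The regular neighborhood $U(\alpha)$ of width $\omega$ is exactly the image in $X$ of the metric neighborhood $N_\omega(A)=\{z\in\HH: d(z,A)<\omega\}$, and it is naturally the quotient $N_\omega(A)/\langle g\rangle$. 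Thus $U(\alpha)$ is an embedded annulus if and only if the projection $N_\omega(A)\to X$ is injective modulo $\langle g\rangle$.

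Next I would translate injectivity into a separation condition on lifts. If two distinct points of $N_\omega(A)/\langle g\rangle$ had the same image in $X$, lifting them to $p,q\in N_\omega(A)$ would give $q=hp$ with $h\in\Gamma\setminus\langle g\rangle$, so that $q\in N_\omega(A)\cap N_\omega(hA)$ with $A':=hA\neq A$. Hence the collar embeds precisely when $N_\omega(A)\cap N_\omega(A')=\emptyset$ for every lift $A'\neq A$ of $\alpha$. Two geodesics admitting a common perpendicular have disjoint $\omega$-neighborhoods if and only if their distance is at least $2\omega$, so it suffices to prove $d(A,A')\ge 2\,\omega(\epsilon)$ for every lift $A'\neq A$.

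The point at which the hypotheses enter is that any two distinct lifts of $\alpha$ are ultraparallel. First, since $\alpha$ is simple, distinct lifts cannot cross, as a transverse intersection of two lifts would project to a self-intersection of $\alpha$. Second, discreteness of $\Gamma$ forbids two distinct lifts from being asymptotic, since in a discrete group two hyperbolic elements cannot share exactly one fixed point at infinity; here the ideal endpoints of $A'=hA$ are the fixed points of $hgh^{-1}$, and they coincide with those of $g$ only when $h\in\langle g\rangle$. Consequently $A$ and $A'$ share no ideal point, are disjoint, and have a genuine common perpendicular of positive length $d(A,A')$, so the reduction of the previous paragraph applies.

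It remains to bound $d(A,A')$ from below, and this hyperbolic-trigonometric estimate is the main obstacle. Writing $A'$ as the semicircle with feet $0<a<b$ on one side of $A$, the standard distance formula gives $\cosh d(A,A')=\frac{b+a}{b-a}$. Because $gA'$ is again a lift, hence disjoint from $A'$, one already obtains the necessary constraint $b/a\le e^{\epsilon}$; the sharp bound comes from analyzing the two-generator Fuchsian group $\langle g,\,hgh^{-1}\rangle$, whose generators are hyperbolic of equal translation length $\epsilon$ with disjoint axes at distance $d(A,A')$, and requiring that it be discrete with $\alpha$ simple. Carrying out this computation---equivalently, studying the Lambert quadrilateral cut out by $A$, $A'$ and their common perpendicular---yields $\sinh\!\big(\tfrac12 d(A,A')\big)\,\sinh\epsilon\ge 1$, that is $\tfrac12 d(A,A')\ge\omega(\epsilon)=\sinh^{-1}(1/\sinh\epsilon)$, which is exactly the desired separation. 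I expect the delicate part to be extracting a constant this strong from discreteness rather than the cruder bound that the naive non-crossing condition alone provides; the width asserted here, being phrased in terms of the full length $\epsilon$ rather than $\epsilon/2$, is the conservative form of the sharp collar estimate of Keen and Buser and follows from it.
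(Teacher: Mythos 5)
This lemma is not proved in the paper at all: it is imported verbatim from Buser's book, with the citation \cite{buser:GSC} standing in for a proof. So there is no argument of the paper's to compare against; the only question is whether your sketch is itself a complete proof, and it is not quite. Your reduction is the correct and standard one: normalizing the axis $A$ of the primitive element $g$ covering $\alpha$, observing that $U(\alpha)$ is embedded if and only if $d(A,hA)\ge 2\omega(\epsilon)$ for every $h\in\Gamma\setminus\langle g\rangle$, and ruling out crossing lifts by simplicity (which, note, the paper's statement forgets to hypothesize but clearly intends) and asymptotic lifts by discreteness. All of that is sound.

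The gap is exactly at the quantitative heart. The only inequality you actually derive is $b/a\le e^{\epsilon}$ from the disjointness of $A'$ and $gA'$, i.e.\ $\cosh d(A,A')\ge \coth(\epsilon/2)$, and this is insufficient in the regime where the collar lemma has content: the required separation is $\cosh\bigl(2\omega(\epsilon)\bigr)=1+2/\sinh^2\epsilon\sim 2/\epsilon^2$ as $\epsilon\to 0$, whereas $\coth(\epsilon/2)\sim 2/\epsilon$, so the naive non-crossing bound fails by a full order in $1/\epsilon$. The stronger inequality $\sinh\bigl(\tfrac12 d(A,A')\bigr)\sinh\epsilon\ge 1$ that you then invoke is essentially the collar lemma itself restated upstairs; saying that "analyzing the two-generator group $\langle g, hgh^{-1}\rangle$" or "studying the Lambert quadrilateral" \emph{yields} it is an assertion, not a derivation --- the trigonometric work (in Buser's treatment, an argument producing a perpendicular geodesic arc returning to $\alpha$ and an inequality of the form $\sinh(\sigma/2)\sinh(\ell/2)\ge 1$ via right-angled quadrilateral identities) is precisely what is missing. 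Your closing appeal to the sharp Keen--Buser collar, from which the paper's weaker width $\omega(\epsilon)=\sinh^{-1}(1/\sinh\epsilon)$ (with $\epsilon$ the full length rather than $\epsilon/2$) does follow by monotonicity, is legitimate verification and indeed coincides with what the paper does; but as a self-contained proof, the proposal is a correct frame built around an uncarried-out computation at the one step that matters.
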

 A holomorphic quadratic differential $q$ on $X$ defines a locally flat metric on $X$ with singularities at zeros of $q$. 
 A \textit{saddle connection} is a geodesic segment with endpoints at zeros of $q$ and whose interior is disjoint from the zero set. Any curve $\alpha$ 
 always has a $q$--geodesic representative. It  might not be unique; there can be a family of parallel copies of $q$--geodesics foliating a flat cylinder. In case there is no flat cylinder, the $q$-geodesic is unique and is a concatenation of finitely many saddle connections. 
 We denote $\ell_{(X,q)}(\alpha)$ the length of a $q$--geodesic or simply $\ell_X(\alpha)$ if there is no ambiguity. We call it the flat length of $\alpha$ on $X$. 
 
 Let $Y$ be a subsurface of $S$. The homotopy class of $Y$ has a representative with $q$-geodesic boundary that is disjoint from the interior
 of the (if any) flat cylinder of every boundary component of $Y$. We will call it \textit{$q$--representative}  of $Y$. 
 
 The \textit{extremal length} of a simple curve $\alpha$ on $X$ is defined by
 $$ \Ext_X(\alpha) =\sup_{\rho}\frac{\ell_\rho(\alpha)^2}{Area_\rho(X)}$$
 where $\rho$ is any metric in the conformal class $X$, and $\ell_\rho(\alpha)$ is the  greatest lower bound of length of curves in the homotopy class of $\alpha$
 in the metric $\rho$. Extremal length can also be computed as the reciprocal of the modulus of the largest cylinder with core curve $\alpha$:
 \begin{equation}\label{Maxmod}
 \Ext_X(\alpha)=\frac{1}{\sup_{C_\alpha}\Mod_X(C_\alpha)}.
 \end{equation}
 
 Maskit established the following comparison result between hyperbolic and extremal lengths.
\begin{theorem}[\cite{maskit:HE}] \label{maskit} Let $X$ be a hyperbolic surface of finite type and $\alpha\subset X$ a curve. Then
 \begin{equation}\nonumber
 \frac{1}{\pi}\leq\frac{\Ext_X(\alpha)}{\Hyp_X(\alpha)}\leq \frac 1 2 e^{\Hyp_X(\alpha)}.
 \end{equation}
Hence, considering both lengths as functions on the Teichm\"uller space, the two lengths of $\alpha$ go to 0 together. Moreover,
 $$\lim\frac{\Ext_X(\alpha)}{\Hyp_X(\alpha)}\to \frac 1 \pi$$ if one of the lengths goes to $0$.
\end{theorem}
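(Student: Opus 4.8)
The plan is to bracket $\Ext_X(\alpha)$ from both sides using the characterization \eqref{Maxmod} of extremal length as the reciprocal of the supremal modulus of an embedded annulus with core $\alpha$. The left inequality will come from an \emph{upper} bound on moduli supplied by the annular cover of $X$, and the right inequality from a \emph{lower} bound on moduli supplied by the collar of $\alpha$. Throughout write $\ell=\Hyp_X(\alpha)$.

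For the lower bound on $\Ext_X(\alpha)$, I would pass to the cover $A$ of $X$ corresponding to the cyclic subgroup $\langle\alpha\rangle<\pi_1(X)$. Writing $X=\HH/\Gamma$, the curve $\alpha$ is represented by a hyperbolic element of translation length $\ell$, so $A$ is conformally $\HH/\langle z\mapsto e^{\ell}z\rangle$. The map $w=\log z$ carries this to the flat cylinder $\{0<\operatorname{Im}w<\pi\}/(w\sim w+\ell)$, of circumference $\ell$ and height $\pi$, whose modulus is $\pi/\ell$. Any embedded annulus $C\subset X$ with core homotopic to $\alpha$ has $\pi_1(C)=\langle\alpha\rangle$ mapping isomorphically into $\pi_1(X)$, so by incompressibility $C$ lifts to an embedded essential annulus in $A$ of the same conformal modulus; hence $\Mod_X(C)\le\pi/\ell$. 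Taking the supremum and applying \eqref{Maxmod} gives $\Ext_X(\alpha)\ge\ell/\pi$, which is the left inequality.

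For the upper bound I would exhibit one large embedded annulus, namely the collar. In Fermi coordinates about $\alpha$ the hyperbolic metric is $d\rho^2+\cosh^2\rho\,dt^2$ with $t\in\RR/\ell\ZZ$, and by the Collar Lemma the region $|\rho|<\omega$, with $\omega=\sinh^{-1}(1/\sinh\ell)$, is embedded. The substitution $u=\arctan(\sinh\rho)$ satisfies $du=d\rho/\cosh\rho$ and exhibits this collar as conformal to a flat cylinder of circumference $\ell$ and height $2\arctan(\sinh\omega)=2\arctan(1/\sinh\ell)$, hence of modulus $\tfrac{2}{\ell}\arctan(1/\sinh\ell)$. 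By \eqref{Maxmod} this yields the slightly sharper bound
\[
\frac{\Ext_X(\alpha)}{\Hyp_X(\alpha)}\le\frac{1}{2\arctan(1/\sinh\ell)}.
\]
To recover the stated form it remains to verify the elementary inequality $\arctan(1/\sinh\ell)\ge e^{-\ell}$: since $\sinh\ell<\tfrac12 e^{\ell}$ one has $1/\sinh\ell>2e^{-\ell}$, and the one-variable estimate $\arctan(2x)\ge x$ on $[0,1]$ (its difference has derivative $\tfrac{1-4x^2}{1+4x^2}$, positive then negative, with nonnegative endpoint values) applied at $x=e^{-\ell}$ finishes it.

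The two consequences then follow at once from the displayed two-sided bound: if either length tends to $0$ the inequalities force the other to as well, and since $\arctan(1/\sinh\ell)\to\pi/2$ as $\ell\to0$, the upper bound on the ratio tends to $1/\pi$, which with the constant lower bound $1/\pi$ squeezes the limit to $1/\pi$. The one genuinely nontrivial point is the lower-bound step: I must justify that the $\langle\alpha\rangle$-cover is conformally the flat cylinder above and that incompressibility lets every embedded annulus with core $\alpha$ lift without increasing its modulus. Granting this, everything else is a direct hyperbolic-geometry computation.
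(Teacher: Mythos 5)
This statement is quoted in the paper from \cite{maskit:HE} without proof, and your argument is correct and is essentially Maskit's original one: the annular cover $\HH/\langle z\mapsto e^{\ell}z\rangle$ of modulus $\pi/\ell$ dominates (via injective lifting and Gr\"otzsch monotonicity) the modulus of every embedded annulus with core $\alpha$, giving $\Ext_X(\alpha)\geq \Hyp_X(\alpha)/\pi$, while the collar of half-width $\sinh^{-1}(1/\sinh\ell)$ supplies the competing modulus $\tfrac{2}{\ell}\arctan(1/\sinh\ell)$, and your elementary bound $\arctan(1/\sinh\ell)\geq e^{-\ell}$ recovers the stated constant $\tfrac12 e^{\ell}$, with the limit $1/\pi$ following from $\arctan(1/\sinh\ell)\to\pi/2$ as $\ell\to 0$. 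The only point to keep in mind is that the argument (like the paper's definition of $\Ext$ via \eqref{Maxmod} and the Collar Lemma) requires $\alpha$ to be a simple closed curve, which is the setting in which the paper actually uses the theorem.
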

\subsection*{Measured foliations} 
We denote by $\MF(S)$ the space of measured foliations on $S$, and by $\PMF(S)$ the space of projective measured foliations, that is the space 
of measured foliations up to scaling. We refer to   \cite[expos\'e 8]{flp:TTs} 
for a detailed discussion of the facts stated here. Any simple curve $\alpha$
on $S$ determines a measured foliation whose non-singular leaves are homotopic to $\alpha$. The set $\RR_+\times \calS$ is dense in $\MF(S)$
and the function 
$$\I:\calS\times\calS \to \RR_+$$
extends via this inclusion to a unique continuous homogeneous function,
$$\I:\MF(S)\times\MF(S) \to \RR_+.$$
Also, Kerckhoff (see \cite{kerckhoff:AG}) proved that the extremal length function $\Ext: T(S)\times \calS\to \mathbb R_+$ has a unique continuous, square-homogeneous in the second factor 
extension to $\MF(S)$, 
$$\Ext: T(S)\times \MF(S)\to \mathbb R_+.$$
  %%%%%%%%%%%%%%%%%%%%
  \subsection*{Measured laminations} There is a closely related theory of measured laminations on $S$. For a fixed hyperbolic metric on $S$, a measured geodesic lamination is a closed subset of $S$ foliated by complete, simple geodesics, together with a Radon invariant measure on transverse arcs. The space of measured laminations is denoted $\ML(S)$. Any closed geodesic can be seen as  a measured lamination, and $\RR_+\times \calS$ is dense in $\ML(S)$. 
  Also, the function $\I:\calS\times\calS \to \RR_+$ extends naturally to a continuous homogeneous function $\I:\ML(S)\times\ML(S) \to \RR_+.$ Moreover,
  the function $\Hyp:T(S)\times \calS\to \RR_+$ extends to a continuous and homogeneous in the second factor function $\Hyp: T(S)\times \ML(S)\to \RR_+.$
  
  We will mostly work with measured foliations, but we need to mention that there is an identification between $\ML(S)$ and $\MF(S)$, see (\cite{levitt:FL}) for details. In particular,
  this identification respects the intersection number functions.
  
  \subsection*{Geodesic currents and Thurston boundary}
Both $T(S)$ and $\ML(S)$ can be embedded in the space of geodesic currents $\calC(S)$: the space of Radon $\pi_1(S)$-- invariant measures on the space of geodesics in 
the universal cover of $S$, see \cite{bonahon:TC} for detailed expositions of the following facts. There is an intersection number function
$$\iota:\calC(S)\times \calC(S)\to \mathbb R$$ that is continuous and homogeneous in both coordinates and that agrees with the intersection number function on $\ML(S)\times\ML(S)$ and the hyperbolic length function on $T(S)\times \ML(S)$. That is, for any $\mu,\nu\in \ML(S)$ and $X\in T(S)$,
we have $\iota(\mu,\nu)=i(\mu,\nu)$ and $\iota(X,\mu)=\Hyp_X(\mu)$.

The quotient $\PC(S)$ of $\calC(S)-\{0\}$ by the action of $\RR_{>0}$ is compact, and the embedding of $T(S)$ and $\ML(S)$ descends to  the embeddings of $T(S)$
and $\PML(S)$. The closure of $T(S)$ in $\PC(S)$ is the Thurston compactification of $T(S)$. The Thurston boundary of $T(S)$ is identified with both $\PML(S)$ and $\PMF(S)$. We will rather use the  $\PMF(S)$ point of view.

 A sequence of points $X_n\in T(S)$ converges to a projective measured
   foliation $[(F,\nu)]$ if and only if for any two curves $\gamma_1,\gamma_2$ on $S$ we have
 \[
 \lim_{n\to\infty}\frac{\Hyp_{X_n}(\gamma_1)}{\Hyp_{X_n}(\gamma_2)}=\frac{\I(\gamma_1,(F,\nu))}{\I(\gamma_2,(F,\nu))}.
 \]
  The following fundamental fact is  due to H. Masur (\cite{masur:CG} and \cite{masur:TB}).
 \begin{theorem}\label{convrays}
 Let $X\in T(S)$ and $q$ a holomorphic quadratic differential on $X$. Let $X_t$ be the \Teich ray based at $X$ and determined by $q$. Let $(F,\mu)$ be the vertical measured foliation of $q$. Then
 \begin{enumerate}
 \item If $q$ is Strebel and $(F,\mu)=\sum_{i=1}^k h_i\gamma_i$, then the ray $X_t$ converges  in $\PMF$ to the baricenter $[\sum_{i=1}^k \gamma_i]$.
 \item If $(F,\mu)$ is uniquely ergodic, then the ray $X_t$ converges  in $\PMF$ to the projective class of $(F,\mu)$, $X_t\to[(F,\mu)]$.
 \end{enumerate}
 \end{theorem}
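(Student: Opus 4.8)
The plan is to determine, in each case, which curves are forced to degenerate along the ray, and then to read the limiting projective foliation directly from the hyperbolic length function. The two tools throughout are the Collar Lemma and Maskit's comparison (Theorem~\ref{maskit}), which let me pass between extremal and hyperbolic length in the degenerating regime.

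For part (1) I would begin with the flat geometry of the cylinder decomposition. Let $c_i$ be the circumference and $h_i$ the height of the cylinder $C_i$ with core $\gamma_i$; the coordinates $\zeta_t=e^t\xi+ie^{-t}\eta$ scale heights by $e^t$ and circumferences by $e^{-t}$, so $\Mod_{X_t}(C_i)=e^{2t}h_i/c_i\to\infty$, whence $\Ext_{X_t}(\gamma_i)\asymp e^{-2t}$ by \eqref{Maxmod} (the flat cylinder realising, up to a bounded factor, the extremal annulus about $\gamma_i$). Maskit's comparison then gives $\Hyp_{X_t}(\gamma_i)\asymp e^{-2t}$, so that $\log\bigl(1/\Hyp_{X_t}(\gamma_i)\bigr)=2t+O(1)$ with the same leading term $2t$ for every $i$. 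Since the cylinders exhaust $X$ away from the measure-zero critical graph, every essential curve $\beta$ must cross them, and I would estimate $\Hyp_{X_t}(\beta)$ by cutting its geodesic representative into collar crossings and excursions through the thick part. By the Collar Lemma each transverse crossing of the collar of $\gamma_i$ contributes $2\log\bigl(1/\Hyp_{X_t}(\gamma_i)\bigr)+O(1)=4t+O(1)$, and there are $\I(\beta,\gamma_i)$ of them, so $\Hyp_{X_t}(\beta)=4t\sum_{i=1}^{k}\I(\beta,\gamma_i)+O(1)$. Forming the ratio for two curves, the common factor $4t$ cancels and the limit equals $\I(\beta_1,\sum_i\gamma_i)/\I(\beta_2,\sum_i\gamma_i)$, which is precisely convergence to the barycenter $[\sum_i\gamma_i]$. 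The point I would emphasise is that the leading coefficient $4t$ is independent of $i$: this is exactly what equalises the weights and yields the barycenter rather than a foliation weighted by the $h_i$. The step needing the most care is bounding the excursions of $\beta$ through the thick part by $O(1)$ uniformly, which amounts to showing that as all the $\gamma_i$ pinch the thick part collapses onto a bounded-diameter neighbourhood of the critical graph.

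For part (2) I would argue by compactness. Normalising $\Area_q=1$, the flat metric $q_t$ is an admissible competitor in the definition of extremal length, and $\ell_{q_t}(\gamma)\geq h_{q_t}(\gamma)=e^{t}\,\I(\gamma,F)$, so one gets for free the lower bound $\Ext_{X_t}(\gamma)\geq \ell_{q_t}(\gamma)^2\geq e^{2t}\I(\gamma,F)^2$. Since $\PMF$ is compact, every subsequence of $X_t$ subconverges to some $[G]$, and it suffices to prove $[G]=[F]$ for every such limit. The crux is to show $\I(G,F)=0$: as $t\to\infty$ the horizontal foliation of $q_t$ is $(H,e^{-t}\nu)$, whose transverse measure tends to $0$, so the surface is stretched along $F$ and no accumulated foliation can cross it. Granting $\I(G,F)=0$, unique ergodicity finishes the argument, since $F$ is minimal and filling, so $\I(G,F)=0$ forces $\mathrm{supp}(G)=\mathrm{supp}(F)$, and uniqueness of the transverse measure gives $G=cF$.

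The main obstacle in part (2) is the upper bound implicit in $\I(G,F)=0$. The lower bound $\Ext_{X_t}(\gamma)\geq e^{2t}\I(\gamma,F)^2$ is immediate, but to identify the limit one must know that it is asymptotically sharp across all curves at once, i.e. that the horizontal term $e^{t}\I(\gamma,F)$ dominates $\Hyp_{X_t}(\gamma)$ uniformly and that the contribution of $\I(\gamma,H)$ washes out. This is where unique ergodicity is indispensable: without it the ray may spend unbounded time deep in the thin part and accumulate on foliations with $\I(G,F)=0$ but $[G]\neq[F]$, as in the non-uniquely-ergodic examples of \cite{lenzhen:DDL,rafi:LS}. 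I would make the domination precise following Masur, comparing hyperbolic to flat length through the thick--thin decomposition and using unique ergodicity to trap the ratios $\Hyp_{X_t}(\gamma_1)/\Hyp_{X_t}(\gamma_2)$ between ratios of horizontal lengths up to errors vanishing as $t\to\infty$.
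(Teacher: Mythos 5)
You should first note that the paper contains no proof of Theorem~\ref{convrays} to compare against: it is quoted from Masur (\cite{masur:CG}, \cite{masur:TB}), so your attempt can only be measured against Masur's arguments and against the toolkit the paper assembles for its own purposes. Measured that way, your outline is in substance correct and is essentially Masur's proof. In part (1), your asymptotics $\Hyp_{X_t}(\gamma_i)\asymp e^{-2t}$ are exactly the paper's Lemma~\ref{extcyl}, which in particular supplies, via a candidate metric, the lower bound on extremal length that you only assert ("the flat cylinder realising, up to a bounded factor, the extremal annulus"); and your collar-crossing count is the same estimate the paper itself uses in Proposition~\ref{onthenose}. Two points to tighten there: a crossing of the collar of $\gamma_i$ contributes $2\omega(\Hyp_{X_t}(\gamma_i))+tw_{X_t}(\gamma_i,\beta)\cdot\Hyp_{X_t}(\gamma_i)+O(1)$, so you must also observe that the twisting of a fixed $\beta$ about a vertical cylinder curve stays bounded along the ray (the fractional twist of a flat cylinder is invariant under the diagonal flow), whence the twist term vanishes as $\Hyp_{X_t}(\gamma_i)\to 0$; and the $O(1)$ bound on thick-part excursions, which you rightly flag as the delicate step, comes from geometric convergence of the thick pieces to the cusped complementary surfaces of the curve system. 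Your key conceptual point --- that the leading coefficient $4t$ is independent of $i$, so the heights $h_i$ wash out and the limit is the barycenter rather than $[\sum_i h_i\gamma_i]$ --- is correct, and is precisely the phenomenon the paper exploits in Corollary~\ref{square}.

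In part (2), the sentence ``the surface is stretched along $F$ and no accumulated foliation can cross it'' is not an argument. The precise statement you need is, however, already in the paper as Lemma~\ref{intersection_zero}, specialized to a ray: there $(G_n,\nu_n)\equiv(F,\mu)$, and the proof runs through Liouville currents, using $s_n\lambda_{X_{t_n}}\to G$ with $s_n\to 0$ while $\I(\lambda_{X_{t_n}},F)=\Hyp_{X_{t_n}}(F)\leq C\sqrt{\Ext_{X_{t_n}}(F)}=Ce^{-t_n}\sqrt{\Ext_X(F)}$ stays bounded. By contrast, your closing paragraph is a red herring: once every subsequential limit $[G]$ satisfies $\I(G,F)=0$, and unique ergodicity (which in this context is taken to include minimality --- you need that for $\mathrm{supp}(G)=\mathrm{supp}(F)$, and the paper's bare definition ``unique measure up to scaling'' does not literally say it) forces $[G]=[F]$, compactness of \PMF already gives convergence of the ray; no uniform ``asymptotic sharpness across all curves at once'' and no trapping of the ratios $\Hyp_{X_t}(\gamma_1)/\Hyp_{X_t}(\gamma_2)$ is required. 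The non-uniquely-ergodic examples you cite fail exactly at the implication $\I(G,F)=0\Rightarrow[G]=[F]$, not at the intersection bound, so the extra machinery you propose to import from Masur's thick--thin analysis can simply be dropped.
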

 
%The topology defined by this notion of convergence turns $T(S)\cup \mathcal{PMF}(S)$ into a closed ball where 
%$\mathcal{PMF}(S)$ is the boundary sphere. For more detail see  \cite[expos\'e 8]{flp:TTs}.

 \subsection{Teichm\"uller disks}
 Let $X\in T(S)$ and $q$ a holomorphic quadratic differential on $X$. We denote $(X,q)$ the flat surface with the singular flat metric defined by $q$. 
Denote $$D=\{re^{i\theta} ,r\in [0,1), \theta\in \RR/2\pi \ZZ\}$$ be the Poincar\'e disk model of the hyperbolic space and let 
\begin{equation}\label{embedding} \phi:D\to T(S),\,\, \phi(re^{i\theta})\mapsto X(e^{i\theta}q,\log\frac{1+r}{1-r}).\end{equation}
Then $\phi$ is an isometric embedding of $D$ to $T(S)$. We denote the image $\phi(D)$ by $D(X,q)$ and we call it the \Teich disk generated by $(X,q)$.\\

 The group $SL(2,\RR)$ acts on flat surfaces as follows. Let $A\in SL(2,\RR)$. The new surface $A\cdot (X,q)$ is obtained by post-composing 
 the coordinate functions with $A$ acting $\RR$-linearly on $\CC$. 
 The set  $D(X,q)$ can also be seen as the projection to $T(S)$ of the $SL(2,\RR)$--
 orbit of $(X,q)$,  see for example \cite{schmith:TD} for details.

 Let $(F_\theta,\mu_\theta)$ be the vertical measured foliation of $e^{2i\theta} q$. We call $\theta$ 
 \textit{completely periodic} 
 if the differential $e^{2i\theta} q$ is Strebel and the moduli of the flat cylinders of $(F_\theta,\mu_\theta)$ are rationally related. We refer to  $\theta$ as 
  \textit{uniquely ergodic} if the measured foliation $(F_\theta,\mu_\theta)$ admits a unique up to scaling measure.
  The following theorem that is crutial for us was proved by Kerckhoff, Masur and Smillie.
  \begin{theorem}[\cite{kerckhoff:EBF}]\label{ue}Let $(X,q)$ be a flat surface. Almost every direction $\theta$ on $(X,q)$ is uniquely ergodic.
  \end{theorem}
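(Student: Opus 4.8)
The plan is to reinterpret unique ergodicity of $(F_\theta,\mu_\theta)$ as recurrence of a \Teich geodesic in moduli space, and then to force recurrence for almost every $\theta$ by a uniform non-divergence estimate. Normalize $(X,q)$ to unit area and, using the $SL(2,\RR)$--action recalled above, set
\[
g_t=\begin{pmatrix}e^{t}&0\\0&e^{-t}\end{pmatrix},\qquad r_\theta=\begin{pmatrix}\cos\theta&-\sin\theta\\\sin\theta&\cos\theta\end{pmatrix},
\]
so that $r_\theta\cdot(X,q)$ carries the quadratic differential $e^{2i\theta}q$ and hence has vertical foliation $(F_\theta,\mu_\theta)$; this is the forward ray in the convention of the previous subsection. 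Let $\mathrm{sys}(t,\theta)$ denote the length of the shortest saddle connection of the flat surface $g_t\,r_\theta\cdot(X,q)$. Since a subset of the moduli space of unit-area quadratic differentials is precompact if and only if the systole is bounded below on it, the trajectory $t\mapsto g_t\,r_\theta\cdot(X,q)$ is recurrent exactly when $\limsup_{t\to\infty}\mathrm{sys}(t,\theta)>0$. I would first invoke Masur's criterion: if this trajectory is recurrent, then $(F_\theta,\mu_\theta)$ is uniquely ergodic (note that a vertical saddle connection would force $\mathrm{sys}(t,\theta)\to 0$, so recurrence already rules it out). It then suffices to prove that the set of divergent directions
\[
B=\{\theta\in\RR/\pi\ZZ:\ \mathrm{sys}(t,\theta)\to 0\ \text{as}\ t\to\infty\}
\]
has Lebesgue measure zero.

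Next I would reduce this to a single quantitative input. For $\epsilon>0$ put $A_t(\epsilon)=\{\theta:\ \mathrm{sys}(t,\theta)<\epsilon\}$ and suppose the uniform non-divergence bound
\begin{equation}
\mathrm{Leb}\big(A_t(\epsilon)\big)\le C(\epsilon)\quad\text{for all }t\ge 0,\qquad C(\epsilon)\to 0\ \text{as}\ \epsilon\to 0 \tag{$\ast$}
\end{equation}
is available. Fix $\epsilon$. If $\theta\in B$ then $\theta\in A_t(\epsilon)$ for all large $t$, so the averages $f_T(\theta)=\tfrac1T\int_0^T\mathbf 1_{A_t(\epsilon)}(\theta)\,dt$ satisfy $f_T(\theta)\to 1$ as $T\to\infty$. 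On the other hand, Fubini and $(\ast)$ give $\int_0^\pi f_T\,d\theta=\tfrac1T\int_0^T\mathrm{Leb}(A_t(\epsilon))\,dt\le C(\epsilon)$ for every $T$, so Fatou's lemma yields
\[
\mathrm{Leb}(B)=\int_B\liminf_{T\to\infty}f_T\,d\theta\le\liminf_{T\to\infty}\int_0^\pi f_T\,d\theta\le C(\epsilon).
\]
Letting $\epsilon\to 0$ forces $\mathrm{Leb}(B)=0$. I stress that this uses the bound $(\ast)$ on a single circle of directions and does \emph{not} follow from ergodicity of the flow, since that circle is a null set in moduli space.

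The main obstacle is exactly the estimate $(\ast)$, the quantitative non-divergence of the \Teich flow, which is the technical heart of the theorem. A saddle connection of $g_t\,r_\theta\cdot(X,q)$ of length $<\epsilon$ comes from a holonomy vector $v$ of the fixed surface $(X,q)$ for which $g_t\,r_\theta\,v$ is short; since $(X,q)$ has at most quadratically many saddle connections of length $\le L$, for a single $v$ the set of $\theta$ with $|g_t\,r_\theta\,v|<\epsilon$ is an arc whose length one can estimate, suggesting a bound of the form $C\epsilon^{\alpha}$ independent of $t$. The delicate point is that there are infinitely many saddle connections, so a naive union bound over all of them diverges; the resolution, carried out by Kerckhoff--Masur--Smillie and later recast by Minsky--Weiss in the spirit of Dani--Margulis non-divergence, is to exploit that the relevant length functions are ``$(C,\alpha)$-good'' along the flow and to run an inductive covering argument over configurations of simultaneously short saddle connections. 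It is this quantitative estimate, rather than any soft ergodic-theoretic input, where essentially all the work lies.
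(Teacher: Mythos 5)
The paper offers no proof of this statement---it is imported verbatim from \cite{kerckhoff:EBF}---so the only meaningful comparison is with the original Kerckhoff--Masur--Smillie argument, and your sketch reproduces its architecture faithfully. The soft parts of your reduction are correct: recurrence implies unique ergodicity is Masur's criterion (equivalently, a minimal non--uniquely ergodic vertical foliation forces the ray to diverge), the countably many directions with a vertical saddle connection are null and may be discarded at the outset, and the passage from the uniform circle estimate $(\ast)$ to $\mathrm{Leb}(B)=0$ via the time averages $f_T$ and Fatou is sound; your observation that ergodicity of the $g_t$-flow on moduli space is useless here, because the circle $\{r_\theta\cdot(X,q)\}$ is a null set, is exactly the right point, and the averaging even tolerates $(\ast)$ holding only for $t\ge t_0(X,q)$. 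The one substantive caveat is the one you name yourself: the estimate $(\ast)$ is not an auxiliary lemma but is, for all practical purposes, the theorem. Its proof---the inductive covering argument over configurations of simultaneously short saddle connections in \cite{kerckhoff:EBF}, later recast by Minsky--Weiss in Dani--Margulis style via $(C,\alpha)$-good functions---is precisely the content being cited, and your correct remark that a union bound over the (infinitely many, though at most quadratically growing) holonomy vectors diverges shows why no shortcut is available. So read as a blind proof your write-up has a genuine gap at $(\ast)$; read as an account of how the cited theorem is actually proved, it is an accurate and well-organized reduction to its technical heart.
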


\subsection*{Veech surfaces}
Let $(X,q)$ be a flat surface.  The Veech group of $(X,q)$ is the set of linear parts of affine (in charts) automorphisms of $(X,q)$. It is a subgroup of $PSL(2,\RR)$.
If the Veech group of $(X,q)$ is a lattice, $(X,q)$ is called a \textit{Veech surface}. These surfaces have many very nice properties. First they satisfy Veech dichotomy:
\begin{theorem}[\cite{veech:TC}] Let $(X,q)$ be a Veech surface. Then any direction $\theta$ on $(X,q)$
is completely periodic or uniquely ergodic. 
\end{theorem}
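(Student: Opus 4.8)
The plan is to translate the statement into the homogeneous dynamics of the closed $\SL(2,\RR)$--orbit and then feed a single non-recurrent geodesic into Masur's criterion. Since $(X,q)$ is a Veech surface, its $\SL(2,\RR)$--orbit in the stratum is closed and, as a homogeneous space, is identified with $\SL(2,\RR)/\Gamma$, where $\Gamma$ is the Veech group, a lattice in $\PSL(2,\RR)$. Under this identification the \Teich geodesic flow is the action of the diagonal subgroup $g_t=\mathrm{diag}(e^t,e^{-t})$, and quotienting by the rotation group $\SO(2)$ presents the orbit as the unit tangent bundle of the finite-area hyperbolic orbifold $\HH/\Gamma$. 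Fix a direction $\theta$; after applying a rotation I may assume $\theta$ is the vertical direction and study the forward geodesic ray $t\mapsto g_t\cdot(X,q)$, whose associated vertical foliation is $(F_\theta,\mu_\theta)$.

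The first step is the elementary observation that in the finite-volume space $\SL(2,\RR)/\Gamma$ every forward orbit falls into exactly one of two classes: it is \emph{recurrent}, meaning there is a fixed compact set $K$ and times $t_n\to\infty$ with $g_{t_n}\cdot(X,q)\in K$, or it is \emph{divergent}, meaning it eventually leaves every compact set. These alternatives are complementary, as one sees by negating the definition of divergence, and they are the two horns of the dichotomy I want.

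In the recurrent case I would invoke Masur's criterion \cite{masur:CG}: if the geodesic ray determined by a quadratic differential returns to a fixed compact subset of the moduli space infinitely often, then its vertical foliation is uniquely ergodic. Hence a recurrent $\theta$ is uniquely ergodic. In the divergent case I would argue geometrically in $\HH/\Gamma$ (such rays exist only when $\Gamma$ is non-cocompact, in which case $\HH/\Gamma$ genuinely has cusps): a geodesic ray that leaves every compact set must eventually enter a cusp neighbourhood and remain there, by the horoball description of the thin part of a finite-area surface, so its forward endpoint on $\partial\HH$ is a parabolic fixed point of $\Gamma$. Let $P\in\Gamma$ be a parabolic fixing this point; it is the derivative of an affine automorphism $\psi$ of $(X,q)$ whose fixed direction is exactly $\theta$. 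It then remains to show that the fixed direction of a parabolic affine automorphism is completely periodic: the surface decomposes into cylinders in direction $\theta$, and since $\psi$ restricts to a power of a Dehn twist on each cylinder while carrying a single shearing parameter, the cylinder moduli are forced to be rationally related.

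The hard part is this last structural step in the divergent case, and it has two delicate pieces. First, one must justify that divergence in $\HH/\Gamma$ really does pin the forward endpoint to a parabolic (cusp) point rather than merely to some point of the limit set; this is exactly where finiteness of the volume and the horoball geometry of the cusps are essential. Second, and more substantively, one must prove that a direction carrying a parabolic affine automorphism cannot be minimal but aperiodic---that is, that the existence of $P$ genuinely produces a cylinder decomposition with commensurable moduli rather than only an affine symmetry of an aperiodic foliation. This implication, that the parabolic directions are precisely the completely periodic ones, is the true content of Veech's theorem, and I expect it to require a careful analysis of how $\psi$ permutes the finitely many $\theta$--separatrices and forces each of them to close up into a saddle connection.
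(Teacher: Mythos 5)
You should first note that the paper does not prove this statement at all --- it is quoted from Veech \cite{veech:TC} --- so the benchmark is the classical argument, and your outline follows its now-standard form: recurrence versus divergence of the Teichm\"uller geodesic in $\SL(2,\RR)/\Gamma$, Masur's criterion on the recurrent side, cusp excursion and a parabolic element on the divergent side. The recurrent half is complete as written: closedness of the orbit makes your compact set $K$ compact in moduli space, so Masur's criterion applies verbatim, and the recurrent/divergent alternative is indeed exhaustive. Likewise your claim that a divergent ray in the finite-area orbifold $\HH/\Gamma$ converges into a single cusp is a correct use of the thick--thin decomposition: once the ray stays in the thin part it cannot pass between horoball neighbourhoods without crossing the thick core, and a ray trapped in one horoball converges to the corresponding parabolic fixed point, whose eigendirection is $\theta$.

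The genuine gap is the step you yourself flag and then leave as an expectation: that the eigendirection of a parabolic $P=D\psi$ in the Veech group is completely periodic with rationally related moduli. As written, your proposal asserts the conclusion (``the surface decomposes into cylinders\dots the cylinder moduli are forced to be rationally related'') without an argument, and this is precisely the content of Veech's theorem rather than a formality. The standard way to close it: normalize $\theta$ to be horizontal and replace $\psi$ by a power that fixes every singularity of $q$ and every horizontal separatrix germ (and has derivative a genuine shear rather than its negative). Since $D\psi$ acts as the identity on horizontal vectors, $\psi$ maps each such separatrix to itself isometrically while fixing its initial singularity, hence fixes it pointwise. If some horizontal separatrix failed to terminate in a singularity, it would be dense in a minimal component of positive area, forcing $\psi=\mathrm{id}$ on that component and hence $D\psi=\mathrm{id}$, a contradiction; so every horizontal separatrix is a saddle connection. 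Cutting along all of them leaves compact, horizontally foliated pieces with no interior singularities; these have Euler characteristic zero and are therefore cylinders, so $\theta$ is Strebel. Finally $\psi$ fixes each cylinder boundary pointwise, so on a cylinder $C_i$ it is an integer power $n_i$ of the Dehn twist, and the uniform shear $s$ of $D\psi$ forces $s\cdot\Mod(C_i)=n_i\in\ZZ$; thus all moduli lie in $\frac{1}{s}\ZZ$ and are rationally related, which is exactly the paper's definition of a completely periodic direction. With this lemma inserted your architecture proves the theorem; without it, the divergent horn of your dichotomy is unproven.
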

Moreover being a Veech surface is  equivalent to following properties.
\begin{theorem}[\cite{smillie:SW}] \label{SW}The following is equivalent for a flat surface $(X,q)$.
\begin{enumerate}
\item $(X,q)$ is a Veech surface;
\item  For each direction $\theta$ on $(X,q)$ in which there is a saddle connection, $\theta$ is completely periodic;
\item  There is $s>0$ such that, for each direction $\theta$ on $(X,q)$ in which there is a saddle connection, $\theta$ is periodic, and the ratio of lengths of any two saddle connections in direction $\theta$ is at most $s$.
\end{enumerate}
\end{theorem}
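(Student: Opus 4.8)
The plan is to route all three conditions through a single dynamical fact: for a flat surface $M=(X,q)$, being a Veech surface is equivalent to the orbit $SL(2,\RR)\cdot M$ being closed in the stratum $\calH_1$ of unit-area differentials. One direction is soft: if the Veech group $\Gamma$ is a lattice, the orbit map $SL(2,\RR)/\Gamma\to\calH_1$ is proper, so its image is closed. For the converse---the substance of Smillie's theorem---one shows that a closed orbit is an embedded homogeneous space $SL(2,\RR)/\Gamma$ of finite volume, so that $\Gamma$ is a lattice; equivalently one produces a finite-sided fundamental domain for $\Gamma$ acting on $\HH$. I would cite this equivalence from \cite{smillie:SW} and treat it as the engine of the whole argument.

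Granting it, I would establish $(1)\Leftrightarrow(2)$ first. The implication $(1)\Rightarrow(2)$ is a refinement of the Veech dichotomy stated above: a direction carrying a saddle connection has a non-minimal foliation (the saddle connection is a compact, hence non-dense, leaf), so it cannot be uniquely ergodic, and the dichotomy forces it to be periodic; moreover on a Veech surface the parabolic subgroup of $\Gamma$ fixing that direction is realized by a single affine multitwist, and the requirement that the shear be consistent across all cylinders forces their moduli to be rationally related, giving complete periodicity. For $(2)\Rightarrow(1)$, rational relations among the moduli in each saddle-connection direction produce an affine multitwist, i.e.\ a parabolic element of $\Gamma$ fixing that direction; since saddle-connection directions are dense in $\RR\PP^1$, the group $\Gamma$ acquires a dense set of parabolic fixed points, and one feeds this (together with the absence of non-periodic saddle-connection directions) into the orbit-closure criterion to conclude that the orbit is closed.

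The crux is $(1)\Leftrightarrow(3)$. The right invariant to track is the smallest area of a parallelogram spanned by two saddle-connection holonomies: since every $g\in SL(2,\RR)$ preserves the wedge product $v\wedge w$, the quantity $\delta(M)=\inf\{\,|v\wedge w| : v,w\ \text{are saddle-connection holonomies with}\ v\wedge w\neq 0\,\}$ is constant along the orbit. I would show that condition $(3)$ is equivalent to the conjunction of the topological dichotomy and $\delta(M)>0$: a large ratio between the lengths of two saddle connections in a fixed periodic direction is precisely what allows one to apply a shear in that direction followed by the diagonal flow so as to manufacture a parallelogram of arbitrarily small area, and conversely a small parallelogram forces such a ratio. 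Finally $\delta(M)>0$ is equivalent to closedness of the orbit by a Mahler-type compactness argument: if the orbit is not closed it accumulates on some $M'\notin SL(2,\RR)\cdot M$, and the degeneration of holonomy sets near $M'$ produces parallelograms of area tending to $0$; conversely arbitrarily small parallelograms yield a sequence in the orbit with no convergent subsequence inside it, so the orbit fails to be closed.

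The step I expect to be the main obstacle is the quantitative equivalence $(3)\Leftrightarrow\bigl(\text{dichotomy}\wedge\delta(M)>0\bigr)$, because it must be made uniform over the infinitely many periodic directions at once: one has to convert a length ratio measured in a single direction into a genuine small-parallelogram statement by an \emph{explicit} element of $SL(2,\RR)$, keep track of which triples of saddle connections actually bound triangles, and ensure all estimates are independent of the direction. By contrast, the dynamical inputs---the equivalence of the lattice property with closedness of the orbit, and the Mahler compactness criterion---are standard and would be cited rather than reproved.
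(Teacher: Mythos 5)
The paper does not prove this statement at all: it is quoted verbatim from Smillie--Weiss \cite{smillie:SW}, so the only meaningful comparison is with their actual argument. On that score your outline is essentially the real one. Smillie--Weiss do route everything through Smillie's theorem that $(X,q)$ is a lattice surface if and only if $SL(2,\RR)\cdot(X,q)$ is closed in the stratum of unit-area differentials, and your invariant $\delta(M)=\inf|v\wedge w|$ is exactly their ``no small virtual triangles'' condition, with a Mahler-type compactness argument converting $\delta(M)>0$ into closedness of the orbit. You also correctly single out the uniformity over infinitely many periodic directions as the hard quantitative point in $(1)\Leftrightarrow(3)$.

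Two sub-steps fail as written, though. First, in $(1)\Rightarrow(2)$ you argue that a saddle connection makes the directional foliation non-minimal, hence not uniquely ergodic. Both implications are false in general: on a torus with a slit, an irrational direction containing the slit as a saddle connection is minimal and uniquely ergodic (and ``not minimal'' does not preclude unique ergodicity either --- periodic directions on the torus are uniquely ergodic). The correct mechanism on a lattice surface is dynamical: a saddle connection in direction $\theta$ forces the ray $g_t r_\theta(X,q)$ to diverge, divergent geodesics in the finite-volume quotient $\HH/\Gamma$ exit through a cusp, and the cusp supplies the parabolic, which in turn forces the Strebel decomposition with commensurable moduli (note that the paper's definition of ``completely periodic'' includes rational moduli, i.e.\ (2) is the \emph{completely parabolic} condition, strictly stronger than topological dichotomy, which by Cheung--Hubert--Masur does \emph{not} imply the lattice property). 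Second, and more seriously, in $(2)\Rightarrow(1)$ you propose to conclude closedness of the orbit from a dense set of parabolic fixed points. Density of parabolic directions does not imply the lattice property: Hubert--Schmidt constructed non-lattice surfaces whose Veech groups are infinitely generated, of the first kind, with dense parabolic directions. The actual content of Smillie--Weiss's answer to Vorobets's question is a geometric argument converting ``\emph{every} saddle-connection direction is parabolic'' into the uniform lower bound $\delta(M)>0$, after which your Mahler argument applies; your phrase ``one feeds this into the orbit-closure criterion'' defers precisely this crux, so the key implication is missing rather than sketched. Finally, be careful about near-circularity: you cite \cite{smillie:SW} for the closed-orbit equivalence while proving a theorem of \cite{smillie:SW}; that equivalence is an earlier, independent theorem of Smillie and is legitimately quotable, but it should be attributed as such.
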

The most interesting example of Veech surfaces for us will be \textit{square-tiled surfaces}, that is, flat surfaces that can be tiled by squares of equal size. These surfaces are Veech by  a theorem of Gutkin-Judge (see \cite{gutkinjudge:AR}).
\section{Some  properties of the limit set of a Teichm\"uller disk. }
In this section we will establish several general facts about limit sets of \Teich disks.
 We start with the following technical statement.
 \begin{lemma}\label{extcyl}Let $X\in T(S)$ and $q$ a quadratic differential on $X$. Let $(F,\mu)$ be the vertical measured foliation of $q$ and suppose that it has a cylinder component with core curve $\gamma$. Let $X_t$ be the \Teich ray based at $X$ and defined by $q$. Then
 along $X_t$, the hyperbolic and extremal lengths of $\gamma$ satisfy
\begin{enumerate} 
\item $\Ext_{X_t}(\gamma)\cdot e^{2t}\Mod_{X}(\gamma)\to 1;$
\item $\Hyp_{X_t}(\gamma)\cdot e^{2t}\Mod_{X}(\gamma)\to \pi.$
\end{enumerate}
 \end{lemma}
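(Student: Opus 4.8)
\emph{Strategy.} The plan is to prove (1) directly and then deduce (2) from (1) together with Maskit's comparison (Theorem~\ref{maskit}). Write $c=v_q(\gamma)$ for the circumference and $h$ for the height of the flat cylinder about $\gamma$, so that $\Mod_X(\gamma)=h/c$, and set $m_t:=e^{2t}\Mod_X(\gamma)$. Under the geodesic flow the natural coordinates scale by $\zeta_t=e^{t}\xi+ie^{-t}\eta$, so the cylinder about $\gamma$ persists as a flat cylinder $C_t\subset (X_t,q_t)$ whose circumference becomes $e^{-t}c$ and whose height becomes $e^{t}h$; hence $\Mod_{X_t}(C_t)=e^{2t}(h/c)=m_t$. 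Since $C_t$ is an embedded annulus with core $\gamma$, the characterization \eqref{Maxmod} gives at once
$$\Ext_{X_t}(\gamma)\le \frac{1}{m_t},$$
so that $\Ext_{X_t}(\gamma)\cdot m_t\le 1$ for all $t$; in particular $\Ext_{X_t}(\gamma)\to 0$. The content of (1) is therefore the matching lower bound $\liminf_t \Ext_{X_t}(\gamma)\cdot m_t\ge 1$, equivalently $\sup_{C_\gamma}\Mod_{X_t}(C_\gamma)\le m_t(1+o(1))$.

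For this I would bound the modulus of an arbitrary embedded annulus $\mathcal A$ with core $\gamma$ by decomposing it along core-parallel curves into the part lying in $C_t$ and the parts lying in the complement $X_t\setminus C_t$, using additivity of the modulus under such a concentric decomposition to write $\Mod(\mathcal A)\le \Mod(C_t)+\Mod(E^+)+\Mod(E^-)$, where $E^\pm$ are annuli in $X_t\setminus C_t$ with core homotopic to $\gamma$. The flat term is exactly $m_t$. The point is that $E^\pm$ are \emph{not} flat cylinders of $q_t$ — the vertical cylinder about $\gamma$ is maximal, so no homotopic flat cylinder survives in the complement — and hence they must expand. The key input is the standard estimate that an embedded annulus whose core has $q_t$-length at least $\ell$ and which is contained in a region of $q_t$-diameter $R$ has modulus $O(\log(R/\ell)+1)$ (expanding annuli in singular flat surfaces). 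Here the core of $E^\pm$ has length at least $\ell_{q_t}(\gamma)=e^{-t}c$, while the complement has fixed area $A=\Area_q(X)$ and diameter $\diam_{q_t}(X_t)\le e^{t}\diam_q(X)=O(e^{t})$, since the flow expands lengths by at most $e^{t}$. Thus
$$\Mod(E^\pm)=O\!\left(\log\frac{e^{t}}{e^{-t}c}\right)=O(t).$$
Because $m_t$ grows like $e^{2t}$ while the expanding contributions grow only linearly in $t$, this gives $\Mod(\mathcal A)\le m_t+O(t)=m_t(1+o(1))$ uniformly in $\mathcal A$, which yields $\sup_{C_\gamma}\Mod_{X_t}(C_\gamma)\le m_t(1+o(1))$ and completes (1).

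Statement (2) then follows with no further flat geometry. Since $\Ext_{X_t}(\gamma)\to 0$, Maskit's theorem forces $\Hyp_{X_t}(\gamma)\to 0$ as well and gives $\Ext_{X_t}(\gamma)/\Hyp_{X_t}(\gamma)\to 1/\pi$; hence $\Hyp_{X_t}(\gamma)=\pi\,\Ext_{X_t}(\gamma)(1+o(1))$ and
$$\Hyp_{X_t}(\gamma)\cdot m_t=\pi\,\big(\Ext_{X_t}(\gamma)\cdot m_t\big)(1+o(1))\to \pi.$$

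\emph{Main obstacle.} The crux is the lower bound in (1), i.e.\ the expanding-annulus estimate: one must show that the maximal conformal annulus about $\gamma$ exceeds the flat cylinder $C_t$ by only a logarithmic (in $t$) amount of modulus. This rests on two points: (a) maximality of the vertical flat cylinder, so the complement contains no competing flat cylinder homotopic to $\gamma$ — otherwise the limit would be a ratio of moduli strictly less than $1$; and (b) the uniform bound $\diam_{q_t}(X_t)=O(e^{t})$ on the ambient flat geometry, which makes the logarithmic term $O(t)=o(m_t)$. Making the additive decomposition of $\mathcal A$ and the expanding-annulus constant fully explicit are the routine-but-delicate steps.
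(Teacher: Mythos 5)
Your upper bound and your deduction of (2) from (1) via Maskit's theorem coincide with the paper's argument, but the lower bound in (1) — which you correctly identify as the crux — has a genuine gap. You invoke ``additivity of the modulus under a concentric decomposition'' to write $\Mod(\mathcal A)\le \Mod(C_t)+\Mod(E^+)+\Mod(E^-)$, but the formal inequality for concentric decompositions goes the other way: if disjoint essential sub-annuli $A_1,\dots,A_n$ sit inside an annulus $A$, Gr\"otzsch's inequality gives \emph{super}additivity, $\Mod(A)\ge \sum_i \Mod(A_i)$. The subadditive bound you need is not formal, and moreover an arbitrary competitor annulus $\mathcal A$ need not meet $\bdy C_t$ in core-parallel curves at all: $\mathcal A\cap C_t$ can have many components, none of them essential sub-annuli, so there is no concentric decomposition to apply. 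The statement you are after — that the \emph{maximal} modulus exceeds $\Mod(C_t)$ by only a small additive error — is precisely the content of the lemma, and it cannot be extracted from the standard flat-plus-expanding-annuli estimates either: those (Minsky, Rafi) give $1/\Ext \asymp \Mod(F)+\Mod(E^\pm)$ only up to multiplicative constants, which would yield $\Ext_{X_t}(\gamma)\cdot e^{2t}\Mod_X(\gamma)$ bounded between constants, not convergent to $1$.

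The way to close the gap is to abandon the modulus formulation for the lower bound and argue directly from the definition $\Ext_X(\gamma)=\sup_\rho \ell_\rho(\gamma)^2/\Area_\rho(X)$ with a test metric — which is what the paper does, in one paragraph: take $\rho_t\equiv 1$ on the $\ell_{X_t}(\gamma)$-neighborhood of the maximal flat cylinder and $0$ elsewhere; every curve homotopic to $\gamma$ has $\rho_t$-length at least $\ell_{X_t}(\gamma)$, and $\Area_{\rho_t}=h\ell+C\ell_{X_t}^2(\gamma)$, giving $\Ext_{X_t}(\gamma)\ge \bigl(e^{2t}\Mod_X(\gamma)+C\bigr)^{-1}$ — an $O(1)$ additive error, sharper than your hoped-for $O(t)$, though either suffices since $m_t\sim e^{2t}$. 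If you insist on keeping your formulation, the honest version is to prove $\sup_{C_\gamma}\Mod(C_\gamma)\le m_t+O(t)$ by such a test metric (flat on $C_t$, a logarithmically weighted metric on the expanding parts), at which point your ingredients (a), (b) — maximality of the flat cylinder and $\diam_{q_t}=O(e^t)$ with core length $e^{-t}c$ — do control the error; but as written, the decomposition step fails.
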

\begin{proof} We will prove the statement about the extremal length. The second statement follows from the first one by Maskit's Theorem \ref{maskit}.

Recall that the extremal length of a simple curve is bounded above by  the reciprocal of the modulus of any cylinder about the curve. Along the Teichm\"uller ray $X_t$ the modulus of the flat cylinder about $\gamma$ is $\Mod_{X_t}(\gamma)=\frac{e^th}{e^{-t}\ell}=e^{2t}\Mod_{X}(\gamma)$, where $h$ is the height of the flat cylinder on $X$ and $\ell$ is the $q$-length of $\gamma$ . 

We then have the upper bound for the extremal length
$$\Ext_{X_t}(\gamma)\leq\frac{1}{\Mod_{X_t}(\gamma)}=\frac{1}{e^{2t}\Mod_{X}(\gamma)}$$
which gives
$$\Ext_{X_t}(\gamma)\cdot e^{2t}\Mod_{X}(\gamma)\leq 1.$$
To get a  lower bound, we can consider a metric $\rho_t|dz|$ on $X_t$ with $\rho_t$ equal to 1 on the $\ell_{X_t}(\gamma)$- neighborhood  of the maximal flat cylinder about $\gamma$ and zero elsewhere. Then the length of any curve homotopic to $\gamma$ in this metric is at least $\ell_{X_t}(\gamma)$ and the area of the surface is $h\ell+C\ell^2_{X_t}(\gamma)$ for some universally bounded $C$. Hence
$$\Ext_{X_t}(\gamma)\geq \frac{\ell^2_{X_t}(\gamma)}{h\ell+C\ell^2_{X_t}(\gamma)}= \frac{\ell_{X}(\gamma)}{e^{2t}h+C\ell_{X}(\gamma)}=\frac{1}{e^{2t}\Mod_X(\gamma)+C}$$ and so 
$$\Ext_{X_t}(\gamma)\cdot e^{2t}\Mod_X(\gamma)\geq \frac{e^{2t }\Mod_X(\gamma)}{e^{2t}\Mod_X(\gamma)+C}\to 1.$$

\end{proof}

Let us now  prove the Lemma \ref{contains} stated in the introduction. 
Recall that it says that the limit set of a \Teich disk $D(X,q)$ always contains $C(X,q)$.
 \begin{proof}[Proof of Lemma \ref{contains}] 
  Recall that the disk $D(X,q)$ is  a union of geodesic rays based at $X$ and defined by the
 quadratic differentials $e^{2i\theta}q$, $\theta\in \RR/\pi \ZZ$. In particular $\Lambda(X,q)$ contains every accumulation point of every \Teich ray.  Hence by Theorem \ref{convrays}, $\Lambda(X,q)$ contains $[F_\theta,\mu_\theta]$ for all $\theta$ ergodic. Moreover,  Theorem \ref{ue}  implies that almost every direction $\theta$ is ergodic. This means that almost every point of $\calC(X,q)$ is in $\Lambda(X,q)$. On the other hand,  $\Lambda(X,q)$ is a closed subset of $\PMF$, so it  contains all of $\calC(X,q)$.
 
 \end{proof}
 It is well known that any accumulation point in $\PMF$ of a \Teich geodesic ray has  zero intersection with the vertical foliation of the quadratic differential that generates $X_t$. The following lemma is a generalisation of this fact.
\begin{lemma}\label{intersection_zero}
Let $X\in T(S)$ and let $X_n\in T(S)$ be a sequence converging to $[(F,\mu)]\in\PMF$. Let $(G_n,\nu_n)$ be the vertical foliation of the quadratic differential on $X$ that determines the geodesic segment 
$[X,X_n]$, and
suppose $(G_n,\nu_n)\underset{n\to \infty}{\to} (G,\nu)$. Then $\I\left((F,\mu),(G,\nu)\right)=0$.
\end{lemma}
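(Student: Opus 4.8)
The plan is to pass to the space of geodesic currents and exploit joint continuity of the intersection form. Since $[(F,\mu)]\in\PMF=\partial T(S)$ and $(T(S),\dT)$ is proper, the hypothesis $X_n\to[(F,\mu)]$ forces $d_n:=\dT(X,X_n)\to\infty$. By definition of the Thurston compactification there are scalars $c_n>0$ with $c_nX_n\to(F,\mu)$ in $\calC(S)$, equivalently $c_n\Hyp_{X_n}(\param)\to\iota\bigl((F,\mu),\param\bigr)$. Because $\iota$ is continuous on $\calC(S)\times\calC(S)$, $(G_n,\nu_n)\to(G,\nu)$, and $\iota(X,\param)=\Hyp_X(\param)$, I can combine the two convergences into
$$\I\bigl((F,\mu),(G,\nu)\bigr)=\lim_{n\to\infty}\iota\bigl(c_nX_n,(G_n,\nu_n)\bigr)=\lim_{n\to\infty}c_n\,\Hyp_{X_n}(G_n,\nu_n).$$
It therefore suffices to prove that this last limit vanishes.

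First I would show $c_n\to0$. Let $q_n$ be the quadratic differential on $X$ defining the direction of $[X,X_n]$, so that $(G_n,\nu_n)$ is its vertical foliation, and set $A_n=\Area(q_n)$; this stays bounded, as it equals $\Ext_X(G_n,\nu_n)$, which converges. Choose a simple curve $\gamma_0$ with $\I(\gamma_0,(G,\nu))>0$ (possible since $(G,\nu)\ne0$), so $\I(\gamma_0,(G_n,\nu_n))$ is bounded below for large $n$. Along the geodesic the horizontal length of $\gamma_0$ at time $d_n$ equals $e^{d_n}\I(\gamma_0,(G_n,\nu_n))$, so applying the elementary bound $\Ext\ge(\text{flat length})^2/\Area$ in the flat metric at $X_n$, together with $(\text{flat length})\ge(\text{horizontal length})$, gives $\Ext_{X_n}(\gamma_0)\ge e^{2d_n}\I(\gamma_0,(G_n,\nu_n))^2/A_n\to\infty$. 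By Maskit's Theorem \ref{maskit} this forces $\Hyp_{X_n}(\gamma_0)\to\infty$, and since $c_n\Hyp_{X_n}(\gamma_0)$ converges to the finite number $\I((F,\mu),\gamma_0)$, we conclude $c_n\to0$.

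Next I would show $\Hyp_{X_n}(G_n,\nu_n)\to0$. The quadratic differential on $X_n$ whose vertical foliation is exactly $(G_n,\nu_n)$ is $e^{-2d_n}$ times the time-$d_n$ transport of $q_n$, of area $e^{-2d_n}A_n$; by the Hubbard--Masur/Gardiner--Masur theorem that the extremal length of a vertical foliation equals the area of its quadratic differential, $\Ext_{X_n}(G_n,\nu_n)=e^{-2d_n}A_n\to0$. Feeding the hyperbolic metric of $X_n$ into the definition of extremal length yields $\Hyp_{X_n}(G_n,\nu_n)^2\le\Area_{\mathrm{hyp}}(X_n)\,\Ext_{X_n}(G_n,\nu_n)=4\pi(g-1)\,\Ext_{X_n}(G_n,\nu_n)$, so $\Hyp_{X_n}(G_n,\nu_n)\to0$. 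Combined with $c_n\to0$ this gives $c_n\Hyp_{X_n}(G_n,\nu_n)\to0$, whence $\I\bigl((F,\mu),(G,\nu)\bigr)=0$.

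The main obstacle is the upper bound on $\Ext_{X_n}(G_n,\nu_n)$: extremal length is a supremum over conformal metrics, so it cannot be bounded above by testing a single metric, and the estimate instead rests on the fact that the flat metric of $q_n$ is extremal for its own vertical foliation (equivalently, on Kerckhoff's formula showing that the vertical foliation realizes the slowest extremal-length growth $e^{-2d_n}$ along the geodesic). I would also take care with the normalizations of $(G_n,\nu_n)$ and of $c_n$, and with extending the two length comparisons from simple closed curves to the foliations $(G_n,\nu_n)$; the latter is routine, since $\RR_+\times\calS$ is dense in $\MF$ and both $\Ext$ and $\Hyp$ are continuous, and the inequality $\Hyp^2\le\Area_{\mathrm{hyp}}\cdot\Ext$ is homogeneous of the correct degree.
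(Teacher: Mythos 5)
Your proposal is correct and follows essentially the same route as the paper: pass to geodesic currents, use joint continuity of $\iota$ to reduce the claim to showing $c_n\Hyp_{X_n}(G_n,\nu_n)\to 0$, and control the hyperbolic length of the vertical foliation via the decay of its extremal length along the segment $[X,X_n]$ together with the comparison $\Hyp\leq C\sqrt{\Ext}$ obtained by testing the hyperbolic metric in the definition of extremal length. The only difference is one of bookkeeping: the paper takes the scalars $s_n\to 0$ for granted and only needs $\Ext_{X_n}(\nu_n)$ (hence $\Hyp_{X_n}(\nu_n)$) bounded, whereas you prove explicitly that both factors tend to zero --- a harmless strengthening of the same argument.
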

\begin{proof} To simplify the presentation, we write $\mu$ instead of $(F,\mu)$, $\nu$ instead of $(G,\nu)$ and $\nu_n$ for $(G_n,\nu_n)$. 
Let $\lambda_n$ be the Liouville geodesic current of $X_n$. Let $s_n\to 0$ be such that 
$$s_n\cdot \lambda_n\to \mu.$$
%Also, let $t_n\to 0$ be such that $$t_n\cdot \nu_n\to \nu.$$
The intersection number is continuous and homogeneous on $\calC(S)\times \calC(S)$, so we have
\begin{equation} \underset{n\to +\infty}{\lim}\I(s_n\lambda_n,\nu_n)= \underset{n\to +\infty}{\lim}s_n\I(\lambda_n,\nu_n)=\I(\mu,\nu).
\end{equation}
To conclude that $\I(\mu,\nu)=0$ it suffices to show that the sequence $\I(\lambda_n,\nu_n)$ is bounded. That is, to show that the hyperbolic length $\Hyp_{X_n}(\nu_n)$ is bounded. 

We argue as follows. Continuity of the extremal length function implies that $\Ext_X(\nu)$ is finite and so
 $\Ext_X(\nu_n)$ is bounded.  Also  along the \Teich geodesic segment $[X,X_n]$ the extremal length of $\nu_n$
 is decreasing, so we can conclude that $\Ext_{X_n}(\nu_n)$ is also bounded.  Now by definition of extremal length  the hyperbolic length is smaller than the square root of extremal length up to a bounded multiplicative error. We have that 
 $$\I(\lambda_n,\nu_n)=\Hyp_{X_n}(\nu_n)\leq C \sqrt{\Ext_{X_n}(\nu_n)}$$
 is bounded. Hence $\I(\mu,\nu)=0$ and this finishes the proof.
\end{proof}
For a subset $E\subset \PMF$, denote 
$$Z(E)=\underset{[(F,\mu)]\in E}{\cup}\{[(G,\nu)]\in\PMF, \I((G,\nu), (F,\mu))=0\}.$$
We have the following immediate consequence of the lemma.
\begin{corollary}  The limit set of a \Teich disk $D(X,q)$ is contained in $Z(C(X,q))$.
\end{corollary}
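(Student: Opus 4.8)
The plan is to reduce directly to Lemma~\ref{intersection_zero}. Let $[(F,\mu)]$ be an arbitrary point of the limit set $\Lambda(X,q)$. By definition there is a sequence $Y_n\in D(X,q)$ converging to $[(F,\mu)]$ in the Thurston compactification; since $[(F,\mu)]\in\PMF$ lies on the boundary, necessarily $d_T(X,Y_n)\to\infty$. First I would exploit the fact that $\phi$ is an isometric embedding of the hyperbolic disk with $\phi(0)=X$, so that the geodesic segment $[X,Y_n]$ is a sub-segment of a diameter of $D$, i.e.\ of the \Teich ray from $X$ defined by a rotated quadratic differential $e^{i\theta_n}q$. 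Consequently the vertical foliation $(G_n,\nu_n)$ of the quadratic differential determining $[X,Y_n]$ is (a fixed normalization of) the vertical foliation of $e^{i\theta_n}q$, and in particular its projective class lies in $C(X,q)$.

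Next I would extract a convergent subsequence. Normalizing every rotation of $q$ to have the same (say unit) area, the assignment $\theta\mapsto(F_\theta,\mu_\theta)\in\MF(S)$ is continuous; this is precisely the continuity that makes $C(X,q)$ a circle in $\PMF$. The parameters $\theta_n$ lie in the compact set $\RR/2\pi\ZZ$, so after passing to a subsequence $\theta_n\to\theta$, and then $(G_n,\nu_n)\to(G,\nu)$ in $\MF(S)$, where $(G,\nu)$ is the vertical foliation of $e^{i\theta}q$; in particular $[(G,\nu)]\in C(X,q)$. Now Lemma~\ref{intersection_zero} applies verbatim with $X_n=Y_n$ (so that $X_n\to[(F,\mu)]$ and $(G_n,\nu_n)\to(G,\nu)$), and yields
$$\I\bigl((F,\mu),(G,\nu)\bigr)=0.$$
Since $[(G,\nu)]\in C(X,q)$, this exhibits $[(F,\mu)]$ as a projective class with zero intersection with some element of $C(X,q)$, that is, $[(F,\mu)]\in Z(C(X,q))$. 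As $[(F,\mu)]$ was arbitrary, we conclude $\Lambda(X,q)\subseteq Z(C(X,q))$.

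I expect the only point requiring genuine care to be the identification in the first paragraph: that the quadratic differential determining $[X,Y_n]$ really is a rotation $e^{i\theta_n}q$ of the fixed differential, and that under a uniform normalization the foliations $(G_n,\nu_n)$ converge in $\MF(S)$ rather than merely projectively, so that the convergence hypothesis of Lemma~\ref{intersection_zero} is actually met. Both facts are immediate from the description of $D(X,q)$ as the image under $\phi$ of the rays $\{re^{i\theta}\}$ and from continuity of the vertical-foliation map, which is exactly why the statement is an immediate consequence of the lemma; everything else is bookkeeping.
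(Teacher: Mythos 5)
Your proof is correct and is exactly the argument the paper intends: the corollary is stated there as an immediate consequence of Lemma~\ref{intersection_zero}, and your write-up supplies precisely the expected bookkeeping (the segment $[X,Y_n]$ lies along a radius of the disk by uniqueness of \Teich geodesics, compactness of the circle of directions, and continuity of the area-normalized vertical-foliation map to get genuine convergence in $\MF(S)$). Nothing further is needed.
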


We now prove a statement that will allow us to eliminate from the race  \Teich disks of flat surfaces with foliations that have both minimal and cylinder components. 
\begin{proposition} \label{periodic}
 Let $X\in T(S)$, $q$ a quadratic differential on $X$ and $X_t$ the \Teich ray based at $X$ and defined by $q$. Let $(F,\mu)$ be the vertical measured foliation of $q$ and suppose it has a cylinder component. 
 If $X_t$ converges in $\PMF$ to $[(F,\mu)]$,  then $(F,\mu)$ has no minimal components. 
\end{proposition}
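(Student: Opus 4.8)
Suppose, for contradiction, that $(F,\mu)$ has a cylinder component with core $\gamma$ and also a minimal component, supported on a subsurface $Y$ with minimal part $F_{\min}$. The plan is to exhibit two curves that both cross $(F,\mu)$ but whose hyperbolic lengths along $X_t$ grow at \emph{incompatible rates}, which is forbidden by the convergence criterion. The governing intuition is that a curve crossing the cylinder is hyperbolically \emph{cheap} — its length grows only like the logarithm of the cylinder modulus, i.e. linearly in $t$ — whereas a curve crossing the minimal component is hyperbolically \emph{expensive}, growing like $e^{t}$. Since convergence to $[(F,\mu)]$ forces, via the two-curve characterization of convergence in $\PMF$, that \emph{every} curve with positive intersection with $(F,\mu)$ have hyperbolic length of one and the same order, these two rates cannot coexist. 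It is worth stressing that this asymmetry is invisible to extremal length: every foliation-crossing curve $\beta$ has $\Ext_{X_t}(\beta)\asymp e^{2t}\,\I(\beta,(F,\mu))^2$, so the argument is genuinely hyperbolic and must use Lemma~\ref{extcyl}(2) together with the Collar Lemma rather than extremal length alone.

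The minimal side is the clean half. Because a minimal component fills its subsurface, every essential non-peripheral curve $\delta\subset\mathrm{int}(Y)$ satisfies $\I(\delta,(F,\mu))=\I(\delta,F_{\min})>0$. Using the flat metric $|q_t|$ as a competitor in the definition of extremal length, together with the lower bound $\ell_{X_t}(\delta)\ge e^{t}\,\I(\delta,(F,\mu))$ on the flat length (the horizontal length of any representative is at least the transverse measure, which scales by $e^{t}$ along the ray), I obtain
\[
\Ext_{X_t}(\delta)\ \ge\ \frac{\ell_{X_t}(\delta)^2}{\Area(q)}\ \ge\ \frac{e^{2t}\,\I(\delta,(F,\mu))^2}{\Area(q)}\ \to\ \infty .
\]
In particular no curve in $\mathrm{int}(Y)$ has extremal, hence hyperbolic, length tending to $0$, so the interior of $Y$ stays in the thick part of $X_t$. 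Choosing $\delta$ in the interior of $Y$ and disjoint from the collars of $\partial Y$, the flat and hyperbolic metrics are comparable along $\delta$, and I conclude $\Hyp_{X_t}(\delta)\asymp\ell_{X_t}(\delta)\gtrsim e^{t}$.

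For the cylinder side, Lemma~\ref{extcyl}(2) gives $\Hyp_{X_t}(\gamma)\sim \pi e^{-2t}/\Mod_X(\gamma)\to 0$, so $\gamma$ is a short geodesic whose collar, by the Collar Lemma, has width $\sim 2t$. I then produce a curve $\epsilon$ with $\I(\epsilon,\gamma)=1$ and $\I(\epsilon,(F,\mu))>0$ whose hyperbolic length is carried by a single crossing of this collar together with a return arc running essentially along vertical leaves, so that $\Hyp_{X_t}(\epsilon)=o(e^{t})$ (one expects in fact $\Hyp_{X_t}(\epsilon)\asymp t$). Applying the convergence criterion to the pair $(\epsilon,\delta)$ then gives
\[
\frac{\Hyp_{X_t}(\epsilon)}{\Hyp_{X_t}(\delta)}\ \longrightarrow\ \frac{\I(\epsilon,(F,\mu))}{\I(\delta,(F,\mu))}\in(0,\infty),
\]
whereas the two rate estimates force this ratio to $0$ — the contradiction that rules out a minimal component.

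The main obstacle is making the two hyperbolic estimates rigorous, and the genuinely delicate one is the upper bound for the cylinder-transverse curve $\epsilon$. Since any curve crossing $\gamma$ must return through $Y$, the difficulty is to route its return arc so that it does not incur the full horizontal ($\sim e^{t}$) cost of traversing the minimally foliated region; this amounts to controlling the hyperbolic distance between the two sides of the collar of $\gamma$ inside $Y$, and is precisely where the flat-to-hyperbolic comparison on the thick part and the density of the leaves of $F_{\min}$ enter. The lower bound $\Hyp_{X_t}(\delta)\gtrsim e^{t}$ likewise rests on comparing flat and hyperbolic metrics away from the thin parts (a thick–thin decomposition in the spirit of Rafi); note that Maskit's Theorem~\ref{maskit} on its own yields only the linear lower bound $\Hyp_{X_t}(\delta)\gtrsim t$, which is too weak to separate the two rates, so the finer comparison is indispensable.
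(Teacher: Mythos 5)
Your overall strategy is the same as the paper's: separate the growth rates of hyperbolic length, with collar crossings of $\gamma$ costing only $O(t)$ while crossings of the minimal piece $Y$ cost $\asymp e^{t}$, and then contradict the two-curve convergence criterion. (Your endgame — comparing a cheap curve $\epsilon$ crossing $\gamma$ with an expensive curve $\delta\subset Y$ — differs from the paper's, which instead surgers a curve $\beta$ into a curve $\beta'$ avoiding $\gamma$ and shows $\Hyp_{X_{t_n}}(\beta')/\Hyp_{X_{t_n}}(\beta)\to 1$, forcing $\gamma$ to get zero weight in the limit; both mechanisms are fine once the rate estimates are in place, and your $\epsilon$ is constructible, e.g.\ crossing the cylinder and returning along the vertical boundary saddle connections of $Y$, though $\I(\epsilon,\gamma)=1$ is impossible when $\gamma$ separates.) The genuine gap is your exponential lower bound for $\delta$. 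You deduce ``the interior of $Y$ stays in the thick part of $X_t$'' from the fact that $\Ext_{X_t}(\delta)\to\infty$ for every \emph{fixed} essential curve $\delta\subset\mathrm{int}(Y)$. That is a quantifier error: the curves realizing the thin part at time $t$ vary with $t$, and a sequence of distinct curves $\delta_n\subset Y$ with $\I(\delta_n,(F,\mu))\to 0$ rapidly (obtained by closing up long leaf segments of $F_{\min}$) can be hyperbolically short at time $t_n$ even though each fixed curve eventually becomes long. When $F|_Y$ is minimal but not uniquely ergodic this actually happens — indeed, by Masur's divergence criterion such excursions into the thin part are unavoidable, and they can persist for all large $t$ — and at such times a fixed $\delta$ crossing a short curve of $Y$ has $\Hyp_{X_t}(\delta)$ as small as $O(t)$ (crossing a collar of modulus $e^{2t}$ costs $\sim\log$ of the modulus, exactly as for $\gamma$ itself), so your rate separation between $\epsilon$ and $\delta$ collapses precisely in the hard case. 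Since the contradiction needs both estimates simultaneously along a common sequence of times, the argument as written does not close.

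This failure mode is exactly what the paper's Claim~\ref{sequence} is engineered to handle, and it is the real content of the proof: at each large time one runs a dichotomy inside $Y$ — either the $q_t$-representative of $Y$ contains a flat cylinder of definite area and modulus $>1$, in which case one \emph{re-times} to the moment $t_\alpha$ when that cylinder has modulus $1$ and its core is vertical, or else $Y$ contains an $\epsilon_0$-thick subsurface $W$ of definite area, in which case an area-counting argument over the polygons cut out by a short marking of $W$ produces a marking curve with definite vertical length. Either way one manufactures, along a subsequence $t_n\to\infty$, distinct curves $\alpha_n\subset Y$ with $\Hyp_{t_n}(\alpha_n)\le C_2$ and $v_{t_n}(\alpha_n)\ge C_1$; the Collar Lemma then gives $\Hyp_{X_{t_n}}(\beta,Y)\ge\omega(C_2)\,\I(\beta,\alpha_n)\gtrsim e^{t_n}$ for \emph{every} curve $\beta$ crossing $Y$ (after renormalizing $\alpha_n/v_0(\alpha_n)\to(G,\eta)$, topologically $F|_Y$), with no thickness hypothesis whatsoever. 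Your proposal would be repaired by replacing your ``$Y$ stays thick, so $\Hyp\asymp\ell_{q_t}$'' step with this construction; your correct observation that Maskit's Theorem~\ref{maskit} alone only yields $\Hyp_{X_t}(\delta)\gtrsim t$ shows you had located where the difficulty sits, but the Rafi-style thick-part comparison you invoke cannot be applied globally in $t$, only at the favorable times the dichotomy provides.
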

\begin{proof}
 We will show that $X_t$ cannot converge to the projectivized measured foliation $[(F,\mu)]$ if $(F,\mu)$ has both a flat cylinder and a minimal component.
So suppose that there are both a minimal component and a flat cylinder,  both of positive area.  

Let $Y\subset X$  be the minimal component. Rescale $q$ so that $\Area_{q}(Y)=1$. As $t\to \infty$, the hyperbolic, flat and extremal lengths of every  boundary component of $Y$ tend to $0$. 

Let $\epsilon_0>0$ be smaller than the Margulis constant and fix a small $\epsilon<\epsilon_0$. Let $N_\epsilon$ be such that for all $ t>N_\epsilon$ and for any boundary component $\gamma$ of $Y$, we have 
$$\max\{\Ext_t(\gamma),\ell_t(\gamma), \Hyp_t(\gamma)\}<\epsilon.$$
\begin{claim} \label{sequence}There are constants $C_1,C_2 >0$, a sequence  $t_n\to +\infty$ and a sequence of distinct simple closed curves $\alpha_n\subset Y$ 
such that 
\begin{enumerate}\label{short}
\item $v_{t_n}(\alpha_n)\geq C_1$.
\item $ \Hyp_{t_n}(\alpha_n)\leq C_2$.
\end{enumerate}
\end{claim}
\begin{proof} %Let $\epsilon_0\in(\epsilon,\epsilon_M)$. 

Let $t>N_\epsilon$. Suppose first that  the $q_t$ representative of $Y$ contains a flat cylinder $\calC $ with $$\Area_{q_t}(\calC)\geq \frac{1}{3g-3}.$$ and modulus $\Mod_t(\calC)>1$. 
Let $\alpha$ be its core curve. 
The vertical component of $\alpha$ could be zero, and this curve would not be useful to us.  But  any such curve is pinched along the Teichm\"uller geodesic ray in the opposite direction, hence the number of such curves is at most $3g-3$. Taking sufficiently large $N_\epsilon$ guarantees that $\alpha$ has positive vertical component. 
Then  there is $t_\alpha\in \mathbb R$ so that  $\Mod_{t_\alpha}(\calC)=1$ and the curve $\alpha$ is vertical, that is $v_{t_\alpha}(\alpha)\geq h_{t_\alpha}(\alpha)$.

Now the extremal length of $\alpha$ on $X_{t_\alpha}$ is bounded below by $1$:
$$\Ext_{t_\alpha}(\alpha)\leq \frac{1}{\Mod_{t_\alpha}(\calC)}=1$$
and so the hyperbolic length of $\alpha$ is also bounded (follows from Theorem \ref{maskit}):
$$\Hyp_{t_\alpha}(\alpha)\leq \pi \Ext_{t_\alpha}(\alpha)\leq \pi.$$
The flat length of $\alpha$ satisfies $$\ell_{t_\alpha}(\alpha)=\sqrt {\Area_{q_{t_\alpha}}(\calC)}\geq \frac{1}{\sqrt{3g-3}}.$$ 
Also the inequality $v_{t_\alpha}(\alpha)\geq h_{t_\alpha}(\alpha)$ and the fact that $\ell_{t_\alpha}(\alpha)\leq h_{t_\alpha}(\alpha)+v_{t_\alpha}(\alpha)$ imply
$$v_{t_\alpha}(\alpha)\geq \frac 1 2 \ell_{t_\alpha}(\alpha)\geq \frac {1}{6g-6}.$$ 
 
Suppose now that at time $t$ there is no big area and big modulus flat cylinder in $Y$. Then
there is an $\epsilon_0$--thick subsurface $W\subset Y$ with $\Area_{q_t}(W)\geq \frac{1}{3g-3}$. Let $(\alpha_i)_{1\leq i\leq k}$ be a short marking of $W$.
It follows from Theorem 1 in \cite{rafi:TT} that there is a constant $C_2$ such that for any curve in the short marking we have
 $$\Hyp_t(\alpha_i),\ell_t(\alpha_i)\in [\frac{1}{C_2},C_2].$$ We 
want to show that for at least one of these curves the vertical component is comparable to the flat length. 

Suppose for some  $\delta>0$ and all $i$ we have
$$v_t(\alpha_i)\leq \delta\ell_t(\alpha_i).$$ 

Cut the $q_t$-representative of $W$ along  $q_t$-geodesic representatives of $\alpha_i$ to get a finite collection of polygons $(P_j)_{1\leq j\leq m}$ or polygons with holes. The number of these polygons is bounded by a number that depends on topology of $X$ only. The area of each polygon $P_j$ can be (generously) estimated by
$$\Area_{q_t}(P_j)\leq \sum_{i=1}^{k} v_t(\alpha_i)\sum_{i=1}^k h_t(\alpha_i)\leq \delta \left[\sum_{i=1}^k \ell_t(\alpha_i)\right]^2.$$
Adding the area of all the polygons we now get that the area of $W$ is bounded above by 
$$\Area_{q_t}(W)\leq \delta m \left[\sum_{i=1}^k\ell_t(\alpha_i)\right]^2$$ 
and this  implies that
$$\delta\geq\frac{\Area_{q_t}(W)}{m\left[\sum_{i=1}^k\ell_t(\alpha_i)\right]^2}.$$
The righthand side is bounded below by a positive universal constant, so choosing $C_1>0$ small enough guarantees that
for  some $i$, 
$$v_t(\alpha_i)\geq C_1.$$
Let $t_n\to +\infty$ be a sequence  of times and $(\alpha_{t_n})$ the sequence of curves constructed this way. Since the foliation $F$ is minimal in $Y$, every non-peripheral simple closed curve in $Y$ has positive horizontal component. This implies that eventually the hyperbolic length of every such curve goes to infinity. Hence 
we can assume the curves $(\alpha_{t_n})$ are all  distinct.
 \end{proof} 
Continuing with the proof of the Proposition, let $(t_n)$ and $(\alpha_n)$ be as in the Claim \ref{sequence}. Denote $(F^h,\mu^h)$ the horizontal foliation of $q$. Also denote $\gamma$ the core curve of the cylinder component of $(F,\mu).$\\
Consider the sequence of weighted curves $\frac{1}{v_{0}(\alpha_{t_n})}\alpha_{t_n}$. 
We have $$\lim_{n\to +\infty}\I(\frac{1}{v_{0}(\alpha_{t_n})}\alpha_{t_n},(F^h,\mu^h))=\lim_{n\to +\infty}\frac{v_0(\alpha_{t_n})}{v_0(\alpha_{t_n})}=1,$$ and
$$\lim_{n\to +\infty}\I(\frac{1}{v_{0}(\alpha_{t_n})}\alpha_{t_n},(F,\mu))=\lim_{n\to+\infty}\frac{h_{0}(\alpha_{t_n})}{v_{0}(\alpha_{t_n})}=0.$$
Note that it follows from \ref{short} and the fact that hyperbolic length cannot grow faster than exponentially that the hyperbolic lengths of the weighted curves $\Hyp_{X_0}(\frac{1}{v_{0}(\alpha_{t_n})}\alpha_{t_n})$ are also bounded. Since hyperbolic length function is proper, 
we can assume (up to passing to a subsequence) that $\frac{1}{v_{0}(\alpha_{t_n})}\alpha_{t_n}$ converges to some measured foliation $(G,\eta)$.
 Note that 
 $$\I((G,\eta),(F^h,\mu^h))=\lim_{n\to +\infty}\frac{v_0(\alpha_{t_n})}{v_0(\alpha_{t_n})}=1,$$ so $(G,\eta)$ is non-zero. Since the curves $\alpha_n$ are all contained in $Y$, the support of $\eta$ is also contained in $Y$.
 We have by continuity of the intersection number function that
 $$\I((G,\eta),(F,\mu))=\lim_{n\to+\infty}\frac{1}{v_{0}(\alpha_{t_n})}\I(\alpha_{t_n},(F,\mu))=\lim_{n\to+\infty}\frac{h_{0}(\alpha_{t_n})}{v_{0}(\alpha_{t_n})}=0,$$
 and minimality of $F$ restricted to $Y$ implies that $G$ is topologically the foliation $F|_Y$. In particular, any closed curve that intersects $Y$ has positive intersection with $(G,\eta)$.

Let $\beta$ be any closed curve such that $\I(\beta,\gamma)>0$ and $\I(\beta,(F|_Y,\mu|_Y))>0$.  Then for $n$ big enough, the hyperbolic 
length of $\beta$ restricted to $Y$ satisfies
 $$\Hyp_{X_{t_n}}(\beta, Y)\geq \omega(C_2)\I(\beta,\alpha_{t_n})\sim \omega(C_2)v_0(\alpha_{t_n})\I(\beta,(G,\eta))\geq \omega(C_2)C_1e^{t_n}\I(\beta,(G,\eta)).$$

Let  us estimate the contribution to the length of $\beta$  on $X_{t_n}$ from the standard collar around $\gamma$. From Lemma \ref{extcyl} we have
$$\Hyp_{X_{t_n}}(\gamma)\sim \frac{\pi}{e^{2t_n}\Mod_X(\gamma)} .$$
Hence the width of the collar about $\gamma$ grows linearly, $\omega(\Hyp_{t_n}(\gamma)) \sim 2t_n$. The twisting of $\beta$ about $\gamma$ does not change significantly with $n$, so we conclude that the  hyperbolic length of $\beta$ on $X_{t_n}$ restricted to the standard collar about $\gamma$ is
\begin{equation}
\Hyp_{X_{t_n}}(\beta, \calC(\gamma))\sim \I(\beta,\gamma)\cdot 2t_n. 
\end{equation}
We obtain that the contribution from $\gamma$ to the length of $\beta$ is negligible compared to the contribution of $Y$
\begin{equation}\label{contribution}
\underset{n\to \infty}{\lim}\frac{\Hyp_{X_{t_n}}(\beta,\calC(\gamma))}{\Hyp_{X_{t_n}}(\beta,Y)}= 0.
\end{equation}

Recall that we assumed that $X_t$ converges to $[(F,\mu)]$. Let us argue that  \ref{contribution} contradicts this assumption. 

Consider any  closed curve $\beta$ that intersects $Y$ and that intersects $\gamma$ twice. Let $\beta'$ be the (multi-)curve disjoint from $\gamma$ and obtained from $\beta$ as follows. Since $\beta$ intersects $\gamma$ twice, there are two arcs of $\beta$ on each side  of $\gamma$. Cut $\beta$ at the points of intersection with $\gamma$ and connect the arcs on the same side by an arc that goes twice around $\gamma$. We get a closed curve $\beta'$ that has self-intersections and possibly more than one component.  Now,
 $\beta$ and $\beta'$ intersect any simple closed curve disjoint from $\gamma$ equal number of times, and so we have $$\I(\beta, (F,\mu))=\I(\beta', (F,\mu))\,\,\, \text{ and }\,\, \I(\beta',\gamma)=0.$$ For some constant $C$ and any  $n$ big enough the hyperbolic lengths of $\beta$ and $\beta'$ satisfy
\begin{equation}
\Hyp_{X_{t_n}}(\beta')-C\leq\Hyp_{X_{t_n}}(\beta)\leq  \Hyp_{X_{t_n}}(\beta')+2\omega(\Hyp_{X_{t_n}}(\gamma)) +C,
\end{equation}
and this together with \ref{contribution} implies 
\begin{equation}
\frac{\Hyp_{X_{t_n}}(\beta')}{\Hyp_{X_{t_n}}(\beta)}\underset{n\to \infty}{\to} 1
\end{equation}
which implies that $\gamma$ has zero weight in the limiting foliation of $X_t$. 
We conclude that measured foliation $(F,\mu)$ cannot have minimal components, and this finishes the proof.
\end{proof}

\begin{proposition} \label{onthenose}Let $(X,q)$ be a flat surface.  Let $\theta$ be a completely periodic direction on $(X,q)$, and denote $(F_\theta,\mu_\theta)=\sum_{i=1}^k h_i\gamma_i$ the corresponding vertical measured foliation. 
Then we have 
\begin{enumerate}
\item If for any $i,j\in\{1,\ldots,k\}$, the function
$$Y\mapsto \frac{\Hyp_Y(\gamma_i)}{\Hyp_Y(\gamma_j)}$$ is bounded on $D(X,q)$, then 
for any  $[(F,\mu)]\in \Lambda(X,q)$ that satisfies $$\I((F,\mu),(F_\theta,\mu_\theta))=0$$ we have
$[(F,\mu)]\subset  \Delta^{\mathrm{o}}(F_\theta,\mu_\theta)$.
\item Moreover, $$\Lambda(X,q)\cap \Delta(F_\theta,\mu_\theta)=[(F_\theta,\mu_\theta)]$$ if and only if,  for any $i,j\in\{1,\ldots,k\}$, the function
$$Y\mapsto \frac{\Hyp_Y(\gamma_i)}{\Hyp_Y(\gamma_j)}$$
is constant on  $D(X,q)$. 
\end{enumerate}
\end{proposition}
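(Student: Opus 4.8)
The plan is to pin down every point of $\Lambda(X,q)$ that lies in $\Delta(F_\theta,\mu_\theta)$ as an explicit nonnegative combination of the cores $\gamma_i$, and then to recover its coefficients from the logarithmic rates at which the $\gamma_i$ are pinched along an approach to the disk boundary. After rotating $q$ I may assume that $\theta$ is the vertical direction and write $(F_\theta,\mu_\theta)=\sum_{i=1}^k h_i\gamma_i$ with all $h_i>0$. If $[(F,\mu)]\in\Lambda(X,q)$ satisfies $\I((F,\mu),(F_\theta,\mu_\theta))=0$, then $\sum_i h_i\,\I((F,\mu),\gamma_i)=0$ with nonnegative summands forces $\I((F,\mu),\gamma_i)=0$ for every $i$; since the cylinders of $q$ fill $S$, a foliation disjoint from every core can only run parallel to the cores, so $(F,\mu)=\sum_l c_l\gamma_l$ with $c_l\ge 0$, a point of $\Delta(F_\theta,\mu_\theta)$. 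Everything then reduces to locating the weights $c_l$.

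The engine is a length--modulus dictionary. Fix $Y_n\to[(F,\mu)]$ in the disk with all $\gamma_i$ pinched. By the Collar Lemma, together with Lemma~\ref{extcyl} and Theorem~\ref{maskit}, the leading part of $\Hyp_{Y_n}(\beta)$ for a test curve $\beta$ is the total collar width it must cross, $\sum_i\I(\beta,\gamma_i)\,\omega\!\left(\Hyp_{Y_n}(\gamma_i)\right)$ with $\omega(\epsilon)\sim\log(1/\epsilon)$; feeding this into the boundary convergence criterion gives
\[ c_i:c_j=\lim_n\ \log\tfrac{1}{\Hyp_{Y_n}(\gamma_i)}\ :\ \log\tfrac{1}{\Hyp_{Y_n}(\gamma_j)}, \]
so $c_i>0$ exactly when $\gamma_i$ is pinched at the top logarithmic rate. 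I would pair this with a computation special to $D(X,q)$: for $Y=A\cdot(X,q)$ with $A=\begin{pmatrix}a&b\\c&d\end{pmatrix}$, the flat cylinder about $\gamma_i$ has modulus $\Mod_Y(\gamma_i)=m_i/(b^2+d^2)$, where $m_i=\Mod_X(\gamma_i)$. Thus the flat moduli keep the constant ratios $m_i:m_j$ over the whole disk, and by \eqref{Maxmod} one has $\Ext_Y(\gamma_i)\le (b^2+d^2)/m_i$, with equality precisely when this flat cylinder is the \emph{largest} cylinder about $\gamma_i$.

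For (1), suppose for contradiction that some $c_i=0$ while $c_j>0$ (such a $j$ exists since $(F,\mu)\neq0$). The displayed weight formula gives $\log\tfrac{1}{\Hyp_{Y_n}(\gamma_i)}=o\!\left(\log\tfrac{1}{\Hyp_{Y_n}(\gamma_j)}\right)$, whence $\Hyp_{Y_n}(\gamma_i)/\Hyp_{Y_n}(\gamma_j)\to\infty$, contradicting the hypothesis that $Y\mapsto \Hyp_Y(\gamma_i)/\Hyp_Y(\gamma_j)$ is bounded on $D(X,q)$; hence all $c_l>0$ and $[(F,\mu)]\in\Delta^{\mathrm o}(F_\theta,\mu_\theta)$. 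For (2) I would argue through the cylinder dichotomy of the previous paragraph. If the ratio is constant, every pinching sequence sends all the $\gamma_i$ to zero at a common logarithmic rate, so the weight formula forces all $c_l$ equal, and $\Lambda(X,q)\cap\Delta(F_\theta,\mu_\theta)$ is the single point to which the periodic ray converges, namely $[(F_\theta,\mu_\theta)]$ by Theorem~\ref{convrays}. Conversely, non-constancy of the ratio should force $\Ext_Y(\gamma_i)$ strictly below the flat bound $(b^2+d^2)/m_i$ for some $Y$ and some $i$, i.e. some $\gamma_i$ acquires a cylinder strictly larger than its flat cylinder; steering toward the boundary through such surfaces pinches that $\gamma_i$ at a strictly faster logarithmic rate, producing a point of the intersection with $c_i/c_j$ away from the balanced value, hence a second point.

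The main obstacle is the length--modulus dictionary of the second paragraph: I must control the twisting of $\beta$ about each $\gamma_i$ and the bounded-geometry part of $Y_n$ so that the collar widths genuinely dominate $\Hyp_{Y_n}(\beta)$ and determine the weights. The second delicate point is the converse in (2): the equivalence ``the ratio is constant on $D(X,q)$ $\Leftrightarrow$ the flat cylinders are maximal throughout $D(X,q)$'' must be made precise, and mere non-constancy on the open disk must be upgraded to a genuinely convergent approach realising a different weight vector.
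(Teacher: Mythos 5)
Your opening reduction already contains a genuine gap. From $\I((F,\mu),(F_\theta,\mu_\theta))=0$ you correctly get $\I((F,\mu),\gamma_i)=0$ for every $i$, but the cores of the cylinders of a Strebel differential need \emph{not} fill $S$: the complement of $\cup_i\gamma_i$ can contain subsurfaces of positive complexity (think of a one-cylinder Strebel differential in genus $2$), and any foliation supported in such a subsurface has zero intersection with every core without being of the form $\sum_l c_l\gamma_l$. So ``disjoint from every core $\Rightarrow$ parallel to the cores'' is false, and the membership $[(F,\mu)]\in\Delta(F_\theta,\mu_\theta)$ is precisely what must be proved dynamically, not read off topologically. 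This is where the paper's key device enters, and your proposal never uses it: since $\theta$ is completely periodic (moduli rationally related), there is an affine Dehn multi-twist $T_{\bar\gamma}=\prod_i T_{\gamma_i}^{n_i}$ with $n_i/n_j=\Mod_X(\gamma_i)/\Mod_X(\gamma_j)$ preserving $D(X,q)$; the paper pulls the sequence $X_n$ back by powers $T_{\bar\gamma}^{k_n}$ into a fixed fundamental domain and, in each of three cases, shows that any curve $\alpha$ disjoint from all $\gamma_i$ keeps bounded length along $X_n$ (twisting does not change its length), forcing $\I(\alpha,(F,\mu))=0$ and hence $(F,\mu)=\sum_i c_i\gamma_i$.

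Your ``engine'' is also unsound on two counts. The thin-part length expansion has a twisting term, $\Hyp_{X_n}(\beta)\approx\sum_i\I(\beta,\gamma_i)\bigl(\omega(\Hyp_{X_n}(\gamma_i))+tw_{X_n}(\gamma_i,\beta)\Hyp_{X_n}(\gamma_i)\bigr)$, and along sequences whose twist coordinates $k_nn_i$ blow up this term dominates the collar widths; the weights $c_i$ are then governed by the combined quantities $\omega(\Hyp_{X_n}(\gamma_i))+|k_nn_i|\Hyp_{X_n}(\gamma_i)$, not by your logarithmic pinching rates. Worse, your standing assumption that the $\gamma_i$ are pinched along any sequence converging into the simplex fails: in the paper's third case the representatives $Y_n$ in the fundamental domain have $\Hyp_{Y_n}(\gamma_i)\to+\infty$, and since twisting about $\gamma_i$ preserves $\Hyp(\gamma_i)$ the same holds along $X_n$ itself, yet $X_n$ converges to a point of $\Delta^{\mathrm{o}}(F_\theta,\mu_\theta)$ purely through twisting. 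Your framework is silent on this case, so neither part (1) nor the forward half of (2) is established. Finally, for the converse in (2), your proposed equivalence ``ratio constant $\Leftrightarrow$ flat cylinders maximal throughout $D(X,q)$'' is both unproven and doubtful — the supremum in \eqref{Maxmod} runs over all embedded annuli and is essentially never realized exactly by the flat cylinder — whereas the paper's converse is a one-line twisting argument you should adopt: if $\Hyp_X(\gamma_i)/\Hyp_X(\gamma_j)\neq\Hyp_Y(\gamma_i)/\Hyp_Y(\gamma_j)$ for some $Y\in D(X,q)$, then $T_{\bar\gamma}^nX$ and $T_{\bar\gamma}^nY$ converge to the distinct points $[\sum_i n_i\Hyp_X(\gamma_i)\gamma_i]$ and $[\sum_i n_i\Hyp_Y(\gamma_i)\gamma_i]$ of $\Delta(F_\theta,\mu_\theta)$.
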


\begin{proof} 
Suppose  that for any $i,j\in\{1,\ldots,k\}$, the function $\frac{\Hyp_Y(\gamma_i)}{\Hyp_Y(\gamma_j)}$ is bounded or constant on $D(X,q)$. 

If the ratio of  hyperbolic lengths is constant in $D(X,q)$, 
it must be equal to the ratio $\frac{\I(\gamma_i,(F_{\theta'},\mu_{\theta'}))}{\I(\gamma_j,(F_{\theta'},\mu_{\theta'}))}$ of the intersection numbers with any uniquely ergodic foliation $(F_{\theta'},\mu_{\theta'})$ on $(X,q)$. 
Hence it is equal to the ratio of the flat lengths of $\gamma_i$ and $\gamma_j$, that is
\begin{equation}\label{constantratio}
 \frac{\Hyp_{Y}(\gamma_i)}{\Hyp_Y(\gamma_j)}=\frac{\ell_X(\gamma_i)}{\ell_X(\gamma_j)} \,\,\,\,\,\forall Y\in D(X,q).%, \forall i,j\in \{1,\ldots,k\}.
\end{equation}

 Let us argue that this implies equality $h_i=h_j$. 
Recall that $h_i$ is the height of the cylinder with core curve $\gamma_i$ on the translation surface $(X,q)$. Consider the \Teich ray $X_t=X(e^{2i\theta}q,t)$ based at $X$ defined by the quadratic differential $e^{2i\theta}q$. 
Since both $\gamma_i$ and $\gamma_j$ have flat cylinders, by Lemma \ref{extcyl}, as $t\to +\infty$, the ratio of their hyperbolic  lengths become asymptotic to the reciprocal of the ratio of the moduli of the corresponding cylinders, that is 
we have 
$$\frac{\Hyp_{X_t}(\gamma_i)}{\Hyp_{X_t}(\gamma_j)}=\underset{t\to \infty}{\lim}\frac{\Mod_{X_t}(\gamma_j)}{\Mod_{X_t}(\gamma_i)}=\underset{t\to \infty}{\lim}\frac{\ell_{X_t}(\gamma_i)h_{X_t}(\gamma_j)}{\ell_{X_t}(\gamma_j)h_{X_t}(\gamma_i)}.$$
Taking into the account the Equation \ref{constantratio} we conclude that $h_{X_t}(\gamma_i)=h_{X_t}(\gamma_j)$ and hence 
\begin{equation} \label{eh}
h_i=h_j\,\,\, \forall i,j\in\{1,\ldots,k\}.
\end{equation}

Let $X_n\subset D(X,q)$ be a sequence that converges in $\PMF$ to $[(F,\mu)]$ and such that $\I((F,\mu),(F_\theta,\mu_\theta))=0$. We need to argue that $(F,\mu)=\sum_{i=1}^k c_i\gamma_i$. Moreover, if the ratios $\frac{\Hyp(\gamma_i)}{\Hyp(\gamma_j)}$ are constant on $D(X,q)$ for all $i,j\in\{1,2,\ldots, k\}$, we want to show that, up to scaling,  $c_1=c_2=\ldots=c_k=1$. If the ratios are only bounded, we want to show that all the coefficients are positive.

 Let $T_{\bar\gamma}$ be a Dehn multi-twist about $(\gamma_1,\ldots,\gamma_k)$ that fixes $D(X,q)$. We choose a fundamental domain $D_{\bar\gamma}\subset D(X,q)$ for the action of  $<T_{\bar\gamma}>$ on $D(X,q)$ so that the boundary components are bi-infinite geodesics and so that $X\in D_{\bar\gamma}$. In one direction they are asymptotic and converge to $[\sum_{i}{\gamma_i}]$ in $\PMF$. In the opposite direction they are asymptotic to rays $X(e^{2i\theta_1}q,t)$ and $X(e^{2i\theta_2}q,t)$ based at $X$. Let $[\theta_1,\theta_2] \subset \RR/\pi\ZZ$ be the segment that does not contain $\theta$.

 Write $T_{\bar\gamma}=\Pi_{i=1}^k T_{\gamma_i}^{n_i}$, where $T_{\gamma_i}$ is the Dehn twist about $\gamma_i$ and  for any $(i,j)\in \{1, 2,\ldots,k\}^2$,
 \begin{equation}\label{twisting ratio}
 \frac{n_i}{n_j}=\frac{\Mod_X(\gamma_i)}{\Mod_X(\gamma_j)}.%=\frac{\ell_X(\gamma_j)}{\ell_X(\gamma_i)}.
 \end{equation}
%Let $\alpha$ and $\beta$ be simple closed curves that intersect $(F_\theta,\mu_\theta).$ 
%It suffices to verify that
%\begin{equation} \label{ratio_baricenter}\frac{\Hyp_{X_n}(\alpha)}{\Hyp_{X_n}(\beta)}\underset{n\to \infty}{\to} \frac{\I(\alpha, (F_\theta,\mu_\theta))}{\I(\beta, (F_\theta,\mu_\theta))}.
%\end{equation}

Let $k_n\in \ZZ$ be such that 
$$Y_n:=T^{k_n}_{\bar\gamma}(X_n)\subset D_{\bar\gamma}.$$  Up to passing to a subsequence, $Y_n$ converges in $T(S)\cup\PMF$. There are several cases to consider.
\begin{enumerate}[label=\Roman*.,leftmargin=*] 
\item Suppose first that the hyperbolic length of $\gamma_i$ along $Y_n$ is bounded above and below. Then, again up to passing to a subsequence, $Y_n$ converges to some $Y\in D_{\bar\gamma}$.  Let $\alpha$ and $\beta$ be simple closed curves. If they are disjoint from all $\gamma_i$, then their length is not changed when we apply the Dehn twist and hence stays bounded along $X_n$. So assume both curves have positive intersection with $(F_\theta,\mu_\theta)$. Along the sequence $T^{-k_n}_{\bar\gamma}(Y)$ we have 
\begin{equation}\label{bl}
\underset{n\to\infty}{\lim}\frac{\Hyp_{T^{-k_n}_{\bar\gamma}(Y)}(\alpha)}{\Hyp_{T^{-k_n}_{\bar\gamma}(Y)}(\beta)}=
\underset{n\to\infty}{\lim}\frac{\sum_{j=1}^k n_j\Hyp_{Y}(\gamma_j)\I(\alpha,\gamma_j)}{\sum_{j=1}^k n_j\Hyp_{Y}(\gamma_j)\I(\beta,\gamma_j)}.
\end{equation}
If the lengths ratios are constant, from  \ref{twisting ratio}, \ref{constantratio} and \ref{eh} it follows that
\begin{equation} \label{equal}
n_j \Hyp_{Y}(\gamma_j)=n_1 \Hyp_{Y}(\gamma_1),
\end{equation}
and we obtain
\begin{equation}\label{limittwist}
\underset{n\to\infty}{\lim}\frac{\Hyp_{T^{-k_n}_{\bar\gamma}(Y)}(\alpha)}{\Hyp_{T^{-k_n}_{\bar\gamma}(Y)}(\beta)}=\frac{\sum_{j=1}^k \I(\alpha,\gamma_j)}{\sum_{j=1}^k \I(\beta,\gamma_j)}.
\end{equation}
We see that the sequence $T^{-k_n}_{\bar\gamma}(Y)$ converges to $[(F_\theta,\mu_\theta)]$, then same is true for the sequence $X_n$ since 
$$\underset{n\to +\infty}{\lim} d_T(X_n,T^{-k_n}_{\bar\gamma}(Y))=\underset{n\to +\infty}{\lim}d_T(Y_n,Y)=0.$$

 In the case of bounded length ratios,  we can argue from Equation \ref{bl} that $X_n$ converges in $\mathcal{PMF}$ to $[\sum_i c_i\gamma_i]$ such that $c_i>0$ by noticing 
 that the quantities $\frac{n_i\Hyp_{Y}(\gamma_i)}{n_j\Hyp_{Y}(\gamma_j)}$ are positive and bounded.

\item Now suppose that the hyperbolic length of $\gamma_i$ goes to $0$ along $Y_n$. Then $Y_n$ converges, like the two geodesic rays forming the boundary of  $D(X)$, to $[\sum_{j=1}^k \gamma_j]$.
 If $\sup_n |k_n|$ is bounded, then $X_n$ is asymptotic to two geodesic rays that converge to the same point, and hence also converges to  $[\sum_{j=1}^k \gamma_j]$.
Let us consider the case  when $\sup_n |k_n|\to +\infty$. 

On the hyperbolic surface $X_n$, consider the standard collar $U_{i}$ about $\gamma_i$, and let $\eta\subset \bar U_{i}$ be a geodesic arc  that crosses the collar and has endpoints on its boundary. 
Then the length of $\eta$ is, up to a bounded additive error, the width of the collar plus the number of times the arc twists about $\gamma_i$ times the length of $\gamma_i$, that is,
\begin{equation}\label{arclength}
|\Hyp_{X_n}(\eta)-(\omega(\Hyp_{X_n}(\gamma_i))+tw_{X_n}(\gamma_i,\eta)\cdot\Hyp_{X_n}(\gamma_i))|\leq C.
\end{equation}
If this $\eta$ is an arc of $\alpha$, then on $X_n$ its twisting about $\gamma_i$ is $|k_nn_i|$ up to an additive error that depends on $\alpha$ and $\gamma_i$. This gives
\begin{equation}\label{thintwist}
\Hyp_{X_n}(\alpha)=\sum_{i=1}^k \left(\omega(\Hyp_{X_n}(\gamma_i))+|k_nn_i|\cdot\Hyp_{X_n}(\gamma_i)\right)\cdot \I(\alpha,\gamma_i)+C(\alpha,\bar\gamma).
\end{equation}
If $\alpha$ is disjoint from all $\gamma_i$, then its length is bounded along $Y_n$ and hence also along $X_n$. In particular $\I(\alpha,(F,\mu))=0$ which means that $(F,\mu)=\sum_{i=1}^k c_i\gamma_i$. 

So let $\alpha$ and $\beta$ be simple closed curves that have positive intersection
number with $(F_\theta,\mu_\theta)$. 

If the lengths ratios of $\gamma_i$ are constant, then from the equation \ref{equal} and the fact that the hyperbolic lengths of any $\gamma_i$ and $\gamma_j$ go to 0 as $n\to\infty$, we have
\begin{equation}
\label{bigtwist}
\left |\Hyp_{X_n}(\alpha)-\sum_{i=1}^k \left(\omega(\Hyp_{X_n}(\gamma_1))+|k_nn_1|\cdot\Hyp_{X_n}(\gamma_1)\right)\cdot \I(\alpha,\gamma_i)\right |\leq C'(\alpha,\bar\gamma).
\end{equation} 
 Then we have 
\begin{equation}
\frac{\Hyp_{X_n}(\alpha)}{\Hyp_{X_n}(\beta)}=\frac{\sum_{i=1}^k \left(\omega(\Hyp_{X_n}(\gamma_1))+|k_nn_1|\cdot\Hyp_{X_n}(\gamma_1)\right)\cdot \I(\alpha,\gamma_i)+O(1)}{\sum_{i=1}^k \left(\omega(\Hyp_{X_n}(\gamma_1))+|k_nn_1|\cdot\Hyp_{X_n}(\gamma_1)\right)\cdot \I(\beta,\gamma_i)+O(1)}\to \frac{\sum_i^k\I(\alpha,\gamma_i)}{\sum_i^k\I(\beta,\gamma_i)},
\end{equation}
that is, $$[(F,\mu)] = [\sum_{j=1}^k \gamma_j].$$

If the lengths ratios for $\gamma_i$'s are bounded,  the quantities $$\omega(\Hyp_{X_n}(\gamma_i))+|k_nn_i|\cdot \Hyp_{X_n}(\gamma_i),\,\, i\in\{1,\ldots, k\}$$  are also
comparable for each $X_n$.  
 Hence, knowing that $X_n$ converges to $[(F,\mu)]$, we conclude that $[(F,\mu)]=[\sum_i c_i\gamma_i]$ where  $c_i>0$ for all $i$.

\item
The last case to consider is that $\Hyp_{Y_n}(\gamma_i)\to +\infty$. We can assume that $Y_n$ converges to some $[(G,\nu)]\in \PMF.$ Note that  the sequence $|k_n|\to +\infty. $ 

By Lemma \ref{intersection_zero},
$ (G,\nu)$ has  intersection number 0 with some $(F_{\theta'},\mu_{\theta'})$ for $\theta'\in [\theta_1,\theta_2]$. Since  the pair of foliations $(F_\theta,\mu_\theta)$ and $(F_{\theta'},\mu_{\theta'})$
fills the surface, we know that  $$\I((G,\nu),(F_\theta,\mu_\theta))>0.$$ Moreover, the assumption that the ratio of hyperbolic lengths of $\gamma_i$ is bounded (or constant) on $D(X,q)$ implies that for all $i\in \{1,\ldots,k\}$ we have $\I(\gamma_i,(G,\nu))>0$. 
This means that along the sequence $Y_n$, for any simple curve $\alpha$
\begin{equation}\label{comparable}\frac{\Hyp_{Y_n}(\alpha) }{\Hyp_{Y_n}(\gamma_i)}\leq O(1).
\end{equation}
 
Morevover, we know that 
$$
|\Hyp_{X_n}(\alpha)- \sum_{i=1}^k k_n n_i \Hyp_{Y_n}(\gamma_i)\I(\alpha,\gamma_i)|\leq \Hyp_{Y_n}(\alpha)+C.
$$
and together with \ref{comparable}, as $n\to \infty$, if $\alpha$ intersects at least some $\gamma_i$, we have
\begin{equation} \label{gammalong}\frac{\Hyp_{X_n}(\alpha)}{k_n\sum_{i=1}^k  n_i \Hyp_{Y_n}(\gamma_i)\I(\alpha,\gamma_i)}\to 1.
\end{equation}
If $\alpha$ is disjoint from  $(F_\theta,\mu_\theta)$, then $\Hyp_{X_n}(\alpha)=\Hyp_{Y_n}(\alpha)$ and so 
$$\frac{\Hyp_{X_n}(\alpha) }{\Hyp_{X_n}(\gamma_i)}\leq O(1)$$ which implies that $\I(\alpha,(F,\mu))=0$. This means that $(F,\mu)=\sum_{i=1}^k c_i\gamma_i$.
If the ratios of lengths of $\gamma_i$ are constant, taking into account  Equation \ref{equal}, this implies that the sequence $X_n$ converges to $[(F,\mu)]=[\sum_{j=1}^k \gamma_j]=[(F_\theta,\mu_\theta)]$. 

For the case when the ratios of hyperbolic lengths of parallel curves are bounded, we still have \ref{gammalong} and \ref{comparable}, so 
 since the quantities $n_i\Hyp_{X_n}(\gamma_i),\, i\in\{1,2,\ldots,k\}$, are comparable for each $n$, the coefficients $c_i$ are all non-zero.

\end{enumerate}
We have shown one direction of the proposition. For the other direction, suppose that for some $i\neq j$, and some point $Y\in D(X,q)$,  
$$\frac{\Hyp_X(\gamma_i)}{\Hyp_X(\gamma_j)}\neq \frac{\Hyp_Y(\gamma_i)}{\Hyp_Y(\gamma_j)}.$$
Then the sequences $X_n:=T_{\bar\gamma}^nX$ and $Y_n:=T_{\bar\gamma}^nY$ converge to different points. This finishes the proof of the proposition. 
\end{proof}

\section{Disks with small limit sets} In this section we prove our main theorem, that is, that   \Teich disks with  limit set in $\PMF$ homeomorphic to a circle, are very rare.  

Suppose $D(X,q)$ is a \Teich disk through $X\in T(S)$ defined by a quadratic differential $q$ that has this property. We first observe that in this case any \Teich ray in $D(X,q)$ based at $X$   converges to the corresponding projectivized measured foliation. 
\begin{lemma} Let $(X,q)$ be a flat surface, let $D(X,q)$ be the corresponding \Teich disk  and suppose that  its limit set $\Lambda(X,q)$ is exactly  $C(X,q)$. Then for any direction $\theta$ on $(X,q)$, the \Teich ray $X_t^\theta=X(e^{i2\theta}q,t)$ converges to the foliation $[(F_\theta,\mu_\theta)]$.

\end{lemma}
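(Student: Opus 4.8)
The plan is to pin down the accumulation set of the single ray $X_t^\theta$ directly, using the hypothesis $\Lambda(X,q)=C(X,q)$ together with the standard constraint on accumulation points of a Teichm\"uller ray. Since $X_t^\theta$ lies in $D(X,q)$, every accumulation point of $X_t^\theta$ in $\PMF$ is an accumulation point of the whole disk, hence lies in $\Lambda(X,q)$, which by hypothesis equals $C(X,q)$. On the other hand, $X_t^\theta$ is generated by $e^{2i\theta}q$, whose vertical foliation is $(F_\theta,\mu_\theta)$; so by the well-known fact recalled just before Lemma~\ref{intersection_zero}, every accumulation point $[(F,\mu)]$ of $X_t^\theta$ satisfies $\I\big((F,\mu),(F_\theta,\mu_\theta)\big)=0$. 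Thus the accumulation set of $X_t^\theta$ is contained in the set of points of $C(X,q)$ having zero intersection with $(F_\theta,\mu_\theta)$.

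The key step is then to show that this last set is the single point $[(F_\theta,\mu_\theta)]$. A point of $C(X,q)$ has the form $[(F_{\theta'},\mu_{\theta'})]$ for some $\theta'\in\RR/\pi\ZZ$, and I would compute the intersection number of two vertical foliations of the flat structure in distinct directions. Writing everything in the natural coordinate $\zeta=\xi+i\eta$ of $q$, the transverse measure of $(F_{\theta'},\mu_{\theta'})$ is $|\cos\theta'\,d\xi-\sin\theta'\,d\eta|$, and since two straight-line foliations in distinct directions are transverse away from the measure-zero singular set and bound no bigons, the geometric intersection number equals the wedge integral,
\[
\I\big((F_\theta,\mu_\theta),(F_{\theta'},\mu_{\theta'})\big)=|\sin(\theta-\theta')|\cdot\Area(X,q).
\]
This vanishes precisely when $\theta'=\theta$ in $\RR/\pi\ZZ$, so $[(F_\theta,\mu_\theta)]$ is the only point of $C(X,q)$ with zero intersection with $(F_\theta,\mu_\theta)$. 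Since $T(S)\cup\PMF$ is compact, the accumulation set of $X_t^\theta$ is non-empty; being contained in a single point, it equals $\{[(F_\theta,\mu_\theta)]\}$, which is exactly the statement that $X_t^\theta$ converges to $[(F_\theta,\mu_\theta)]$. Note that this collapse to a singleton makes any appeal to connectedness of the limit set unnecessary.

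The only place requiring care is the case when $\theta$ is \emph{not} uniquely ergodic: then the topological foliation $F_\theta$ may carry transverse measures other than $\mu_\theta$, and all of these also have zero intersection with $(F_\theta,\mu_\theta)$, so a priori the ray might accumulate at such a foliation. The resolution is that these extra measured foliations are supported on $F_\theta$ and hence are topologically distinct from every $(F_{\theta'},\mu_{\theta'})$ with $\theta'\neq\theta$ (which is transverse to $F_\theta$); consequently they do not lie in $C(X,q)$, and the hypothesis $\Lambda(X,q)=C(X,q)$ rules them out automatically. I therefore expect the main, and rather modest, obstacle to be the clean justification of the intersection-number positivity for distinct directions in the presence of cylinder components, rather than any genuine difficulty with convergence.
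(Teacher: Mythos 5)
Your proof is correct and follows essentially the same route as the paper's: every accumulation point of the ray lies in $\Lambda(X,q)=C(X,q)$ and has zero intersection number with $(F_\theta,\mu_\theta)$ (Lemma \ref{intersection_zero}), while foliations in distinct directions have strictly positive intersection, so the ray can only accumulate at $[(F_\theta,\mu_\theta)]$. The sole difference is cosmetic: where the paper invokes that $(F_\theta,\mu_\theta)$ and $(F_{\theta'},\mu_{\theta'})$ fill and hence intersect positively, you verify this by the explicit computation $\I\big((F_\theta,\mu_\theta),(F_{\theta'},\mu_{\theta'})\big)=|\sin(\theta-\theta')|\cdot\Area(X,q)$, and your added remark disposing of the non-uniquely ergodic case (other measures on $F_\theta$ are not points of $C(X,q)$, so the hypothesis excludes them) makes explicit a point the paper leaves implicit.
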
\label{rays_converge}
 \begin{proof} Let $\theta\in \RR/\pi\ZZ$. By Lemma \ref{intersection_zero} any limit point of the ray $X^\theta_t$ defined by $(F_\theta,\mu_\theta)$  has intersection number zero with $(F_\theta,\mu_\theta)$. On the other hand, for any $\theta'\neq \theta $, the foliations $(F_\theta,\mu_\theta)$ and  $(F_{\theta'},\mu_{\theta'})$ are filling and therefore have 
 strictly positive intersection number. Hence $X_t$ can only have one accumulation point, and that point has to be $[(F_\theta,\mu_\theta)]$.
\end{proof}
  This in fact turns out to be a significant restriction. 
\begin{corollary}\label{square} Let $(X,q)$ be a flat surface such that the limit set $\Lambda(X,q)$ of the \Teich disk is  $C(X,q)$. Then the $SL(2,\mathbb R)$-- orbit of $(X,q)$ 
contains a square-tiled surface. In particular, $(X,q)$ is a Veech surface.  Moreover, for any completely periodic direction $\theta$, the cylinders of the corresponding foliation have equal heights.
\end{corollary}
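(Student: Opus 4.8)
The plan is to combine the convergence of every ray (the preceding Lemma~\ref{rays_converge}) with Masur's description of Strebel rays to pin down the flat geometry of $(X,q)$ in every periodic direction, and then to feed two transverse periodic directions into a period-quantization argument.

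First I would record two consequences of the hypothesis $\Lambda(X,q)=C(X,q)$. Since by Lemma~\ref{rays_converge} every ray $X^\theta_t$ converges to $[(F_\theta,\mu_\theta)]$, Proposition~\ref{periodic} shows that no direction can carry a cylinder component alongside a minimal component; hence \emph{every direction containing a cylinder is Strebel}, i.e.\ its foliation is a finite union of cylinders $\sum_{i=1}^k h_i\gamma_i$. Next, for such a Strebel direction Masur's Theorem~\ref{convrays}(1) asserts that the ray converges to the barycentre $[\sum_i\gamma_i]$, while Lemma~\ref{rays_converge} says it converges to $[\sum_i h_i\gamma_i]$; comparing projective classes (the $\gamma_i$ are pairwise non-isotopic, so the simplex coordinates are well defined) forces $h_1=\dots=h_k$. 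Thus \emph{every Strebel direction has cylinders of equal height}, which in particular yields the last assertion of the corollary.

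Now I would produce the square-tiled structure. By the classical density of closed geodesics (\cite{masur:CG}) the surface carries cylinders in at least two distinct directions $\theta_1\neq\theta_2$, and by the previous paragraph both are Strebel with equal cylinder heights, say $a_1$ and $a_2$. The key geometric step is that an equal-height cylinder decomposition quantizes periods: working in coordinates where $\theta_1$ is horizontal, every cylinder has vertical extent exactly $a_1$, so the horizontal saddle connections bounding them occur only at heights lying in a single coset $a_1\ZZ$; consequently the component perpendicular to $\theta_1$ of every (relative or absolute) period of the natural coordinate lies in $a_1\ZZ$. Applying the same argument in the direction $\theta_2$ gives a second, linearly independent quantization. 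Hence all periods lie in a lattice $L\subset\CC$, so the natural-coordinate developing map $p\mapsto\int_{p_0}^p\sqrt q$ realizes $(X,q)$ as a branched cover of the torus $\CC/L$ (passing to the orientation double cover if $q$ is not globally a square). After normalizing $L$ by an element of $SL(2,\RR)$ and rescaling, this exhibits a square-tiled surface in the $SL(2,\RR)$--orbit of $(X,q)$; being square-tiled, $(X,q)$ is then Veech by Gutkin--Judge (\cite{gutkinjudge:AR}).

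The main obstacle is the quantization step above: turning the soft statement ``equal heights'' into the rigid statement ``periods lie in a lattice.'' One must verify that equal heights genuinely force all cylinder boundaries into one coset $a_1\ZZ$ (tracking the level function across the gluings and checking it is single-valued modulo $a_1\ZZ$ around loops), and that the two quantizing functionals coming from $\theta_1$ and $\theta_2$ are linearly independent, so that their common set of constraints is a genuine lattice rather than a subgroup of positive rank in one direction. A secondary, routine point is the bookkeeping for quadratic (as opposed to abelian) differentials, handled on the orientation double cover, where the $\pm1$ deck symmetry leaves the constraint lattice invariant.
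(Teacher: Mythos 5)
Your proof is correct and follows the paper's own argument essentially step for step: it produces two transverse cylinder directions via Masur (the paper cites \cite{masur:CT}, Theorem 2, where you cite \cite{masur:CG} --- a citation slip only), uses Proposition~\ref{periodic} together with Lemma~\ref{rays_converge} to see those directions are Strebel, and compares Theorem~\ref{convrays}(1) with Lemma~\ref{rays_converge} to force equal cylinder heights. The one place you diverge is at the end, where the paper simply asserts that the two transverse equal-height decompositions tile $(X,q)$ by isometric parallelograms, while you justify this via the period-quantization argument (all relative periods lie in a lattice, so the developing map exhibits a branched torus cover, passing to the orientation double cover when $q$ is not a square); this is a correct and more careful rendering of a step the paper leaves implicit.
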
 
\begin{proof}
By a theorem of Masur (\cite{masur:CT}, Theorem 2),  we can choose two transverse directions $\theta_1$ and $\theta_2$ on $(X,q)$ such that the corresponding foliations  have at least one cylinder. By the Proposition \ref{periodic},  these directions are Strebel.

Let $X_t^j$ be the \Teich geodesic ray based at $X$ and defined by $e^{i2\theta_j}q$ for $j=1,2$. Combining Theorem \ref{convrays} of Masur and Lemma \ref{rays_converge} we conclude that the weights of the cylinder curves in $(F_{\theta_j},\mu_j)$
are equal, which means that the cylinders of $(F_{\theta_j},\mu_j)$ have equal heights.

This implies that the flat surface $(X,q)$ is tiled by isometric parallelograms, their sides are segments of saddle connections of slopes $\theta_1$ and $\theta_2$. Hence there is a square-tiled surface in the $SL(2,\mathbb R)$- orbit  $(X,q)$.  In particular $(X,q)$  is a Veech surface. The last statement follows also from the argument in the previous paragraph.

\end{proof}

We will call the property of having in each  periodic direction cylinders of equal heights the \textbf{balanced heights property}.

\begin{theorem}\label{malo} For any $g\geq 2$, there are at most finitely many $SL(2,\mathbb R)$--orbits of square-tiled surfaces with balanced heights property.
\end{theorem}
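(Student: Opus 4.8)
The plan is to reduce the statement to a bound on the number of squares, and then to extract that bound from the Riemann--Hurwitz formula together with the balanced heights property. First I would reduce to bounding the number of squares $N$. The balanced heights property is invariant under the $SL(2,\RR)$--action: an element $A$ carries the cylinders of a periodic direction $\theta$ to the cylinders of the direction $A\cdot\theta$, scaling all of their heights by one common factor (depending on $\theta$ and $A$ but not on the individual cylinder), so equal heights are preserved. Hence it is a property of the whole orbit. The square-tiled surfaces in a single $SL(2,\RR)$--orbit form one $SL(2,\ZZ)$--orbit, all with the same number of squares $N$, and for each fixed $N$ there are only finitely many square-tiled surfaces (finitely many pairs of gluing permutations in $S_N$ up to simultaneous conjugacy). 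Thus it suffices to bound $N$ by a constant depending only on $g$.

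Next I would translate the problem into covering data. Present $(X,q)$ as a degree-$N$ translation cover $\pi\colon (X,q)\to (E,dz^2)$ of the standard square torus $E=\CC/\ZZ^2$, branched over the single point $O$ (the non-abelian quadratic case is handled the same way over the pillowcase). The preimages of $O$ are exactly the vertices of the square tiling, the ramified ones being the cone points. Since the orders of the cone points sum to $2g-2$, the number of ramified preimages is at most $2g-2$, and Riemann--Hurwitz reads
\[
N \;=\; (2g-2)\;+\;\bigl|\pi^{-1}(O)\bigr| \;=\; (2g-2)\;+\;s\;+\;u,
\]
where $s\le 2g-2$ is the number of cone points and $u$ is the number of \emph{regular} (unramified) preimages of $O$. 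Consequently bounding $N$ is equivalent to bounding $u$, and proving $u=0$ gives at once $N\le 4g-4$.

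The geometric heart is the following description of the cylinders. For a primitive rational direction $v$, the maximal cylinders are vertical stacks of the connected components of $\pi^{-1}(E\setminus\delta_v)$, where $\delta_v$ is the direction-$v$ geodesic through $O$; these \emph{elementary cylinders} all have the same height, and a stacking glues two of them across a leaf of $\pi^{-1}(\delta_v)$ precisely at a regular preimage of $O$, while it stops at a cone point. In particular, if $u=0$ then every leaf of $\pi^{-1}(\delta_v)$ meets a cone point, no stacking ever occurs, every elementary cylinder is already maximal, and balanced heights holds automatically. The Eierlegende Wollmilchsau ($g=3$, $N=8=4g-4$) is exactly such a cover, so the family in question is genuinely nonempty and the bound $4g-4$ is attained.

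\textbf{The main obstacle} is the converse: that a single regular preimage of $O$ forces the heights to be unbalanced in some direction, so that balanced heights implies $u=0$. Given a regular vertex $Q$ and a cone point $P$ (which exists since $g\ge 2$), the plan is to exhibit a rational direction $v$ in which the leaf through $Q$ lies in the interior of a maximal cylinder, forcing that cylinder to be a stack of at least two elementary cylinders, while some cone point bounds on both sides a maximal cylinder consisting of a single elementary cylinder; the two cylinders then have different heights, contradicting the balanced heights property. Producing such a separating direction, and verifying that the two stack-heights genuinely differ in the presence of several cone points and several regular vertices (and in the pillowcase case), is the delicate step. Once it is established one has $u=0$, hence $N\le 4g-4$, and the finiteness of branched covers of the torus of degree at most $4g-4$ with a single branch point completes the proof.
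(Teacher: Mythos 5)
Your reduction to bounding the minimal number of squares $N$ in an orbit is sound (modulo the side remark that all square-tiled surfaces in one $SL(2,\RR)$--orbit have the same number of squares --- this holds only for primitive presentations, but working with the minimal one repairs it), and your Riemann--Hurwitz bookkeeping $N=(2g-2)+s+u$ together with the implication ``$u=0$ implies balanced heights'' are both correct; the latter is essentially the observation of Lemma \ref{corners} in the paper. The genuine gap is the converse --- that balanced heights forces $u=0$ --- which carries the entire weight of your proposed bound $N\le 4g-4$, and which you explicitly leave as ``the delicate step.'' The strategy you sketch for it has two unfilled holes, each of which can fail: (i) you need a primitive direction $v$ in which the component of $\pi^{-1}(\delta_v)$ through the regular vertex $Q$ contains no cone point, but the partition of $\pi^{-1}(O)$ into leaf components is dictated by the monodromy, and there is no a priori reason such a direction exists --- $Q$ could share its leaf with a cone point in every rational direction; and (ii) even when the maximal cylinder containing $Q$'s leaf is a stack of $k\ge 2$ elementary cylinders, balanced heights is violated only if some other maximal cylinder in the \emph{same} direction is a stack of $k'\neq k$ of them; if the stacking is uniform in every direction, no contradiction arises. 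Without these two steps the argument proves only the easy inclusion, and it is not clear that the claimed equivalence is even true: the paper establishes nothing that strong.

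The paper avoids needing any such dichotomy by a quantitative argument. After passing to the minimal presentation with unit squares (so that horizontal and vertical cylinders have height exactly $1$), it shows by cutting a horizontal cylinder into at least as many equal parallelograms as it has squares that \emph{every} periodic direction has cylinder height at most $1$; it then invokes Vorobets' theorem, which produces a cylinder of length at most $2^{2^{4m}}\sqrt{N}$ and area at least $N/m$, hence of height at least $\sqrt{N}/(m2^{2^{4m}})$, where $m$ is the total multiplicity of the singularities. Comparing the two bounds gives $N\le m^2 2^{2^{4m+1}}$, with $m$ bounded in terms of $g$ alone. This yields a tower-type bound rather than your hoped-for $4g-4$, which is exactly the price of not knowing whether balanced heights excludes regular vertices; if you could actually prove your converse you would obtain a result sharper than the paper's, but as written the proposal's central implication is an unsupported conjecture rather than a proof.
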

\begin{proof} Let $(X,q)$ be square-tiled, and suppose it has the balanced heights property.  We will assume that the number $N$ of squares used to tile $(X,q)$ is the smallest possible for the orbit of $(X,q)$, that is, any other square-tilled surface in the $SL(2,\mathbb R)$-orbit of $(X,q)$ needs at least $N$ squares. 
It suffices to prove that  the number $N$ is bounded by some constant that depends only on the genus of the underlying topological surface. 
The argument goes as follows. 

Scale $q$ so that  the squares have area 1. Then minimality of $N$ implies that the vertical and horizontal cylinders are of height 1. We claim that in any other periodic direction, the height of cylinders are at most $1$. Indeed, let $\mathcal C$ be a horizontal cylinder and suppose it has area $M$, that is, it is tiled with $M$ unit squares. Choose $\theta$  any non-horizontal periodic direction. Saddle connections of slope  $\theta$ cut $\mathcal C$ 
into equal parallelograms.  The number of the parallelograms is at least $M$, since we assumed that $(X,q)$ has smallest number of squares. Let $p$ and $q$ be two distinct vertices on the same boundary component of $\mathcal C$ such that the distance $d$ between them along that boundary component is smallest. Then $d$ is at most one, which implies that the height of cylinders with slope $\theta$ is at most $1$.

Now we prove that $N$ is bounded by a constant that depends only on $g$. Let $m$ be the sum of multiplicities of all singularities of $q$. By a theorem of Vorobets (\cite{vorobets:PG}),  $(X,q)$ has a flat cylinder of length 
$\ell\leq 2^{2^{4m}}\sqrt{N}$ and area $A\geq\frac{N}{m}$. That is, there is a cylinder of height  $$h=\frac{A}{\ell}=\frac{\sqrt N}{m2^{2^{4m}}}.$$ On the other hand, we just showed that this height cannot be greater than $1$. Hence
$$N\leq m^22^{2^{4m+1}}.$$

 \end{proof}
We can now prove the main theorem of the paper.
\begin{proof}[Proof of Theorem \ref{small}] The statement follows directly from the Corollary \ref{square} and Theorem \ref{malo}. By Corollary \ref{square}, any flat surface $(X,q)$ of genus $g\geq 2$ for which the limit set $\Lambda(X,q)$ of the corresponding  \Teich disk is  equal to the circle $C(X,q)$ can be assumed square-tiled with  heights property. By Theorem \ref{malo} there are finitely many orbits of such surfaces for any given $g$. 

\end{proof}
 Here is a curious observation. One application for it at the moment is finding flat surfaces with heights property. 
 \begin{lemma}\label{corners} 
Let $(X,q)$ be a square-tiled surface where  no cylinder curve passes through a vertex of a square. Then $(X,q)$ has balanced heights property.
 \end{lemma}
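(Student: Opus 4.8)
The plan is to exploit the description of a square-tiled surface as a branched torus cover and to recover the height of each cylinder from how the cover wraps. Write $\pi\colon (X,q)\to \RR^2/\ZZ^2$ for the branched covering of the unit square torus, branched over its single marked point $0$; with the natural tiling the vertices of the squares are precisely the points of $\pi^{-1}(0)$, and the singularities of $q$ are those vertices whose cone angle exceeds $2\pi$. Fix a periodic direction $\theta$ (necessarily of rational slope). On the base torus this direction fills a single cylinder $T_\theta$ of some height $h_\theta$, whose two boundary sides are identified to the single closed leaf $\ell_0$ through $0$; in particular $\ell_0$ meets the branch point.

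First I would analyse one cylinder $C$ of the direction-$\theta$ decomposition of $(X,q)$. Its boundary is a concatenation of saddle connections, each running between singularities, so $\pi(\partial C)\subset \ell_0$. Over the complement of $0$ the map $\pi$ is an honest local isometry, hence $\pi|_{C}\colon C\to T_\theta$ is a covering of cylinders taking $\partial C$ onto $\partial T_\theta=\ell_0$. Since $\pi$ preserves the transverse (height) coordinate locally and both boundary leaves of $C$ map to $\ell_0$, the total height of $C$ is an integer multiple of $h_\theta$:
\[
\operatorname{height}(C)=m(C)\cdot h_\theta,\qquad m(C)\in\ZZ_{\ge 1}.
\]

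The heart of the matter is to rule out $m(C)\ge 2$ using the hypothesis. Recall that a cylinder curve is a regular closed leaf of the flat metric, and every such leaf is a core curve of the cylinder that contains it. Suppose $m(C)\ge 2$ and consider the closed leaf $\lambda$ of $C$ at transverse height $h_\theta$. Since $\pi(\lambda)=\ell_0$ and $\ell_0$ passes through $0$, the leaf $\lambda$ passes through a point of $\pi^{-1}(0)$, i.e.\ through a vertex of a square. Because $0<h_\theta<m(C)\,h_\theta$, this leaf lies in the interior of the maximal cylinder $C$; hence $\lambda$ is a regular closed leaf and the vertex it meets has cone angle $2\pi$, not a singularity. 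Thus $\lambda$ is a genuine cylinder curve through a vertex, contradicting the hypothesis. Therefore $m(C)=1$ for every cylinder $C$, so all cylinders in direction $\theta$ share the common height $h_\theta$; as $\theta$ was an arbitrary periodic direction, $(X,q)$ has the balanced heights property.

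The step I expect to demand the most care is the bookkeeping of the middle paragraph: checking that $\pi|_{C}$ is genuinely a covering onto all of $T_\theta$ with a well-defined integer height-wrapping number $m(C)$. This amounts to verifying that \emph{both} boundary components of $C$ map to $\ell_0$ (rather than to an interior leaf of $T_\theta$), which follows from the facts that saddle connections join singularities and that singularities map to $0$, together with the observation that the interior leaf of $C$ at height $h_\theta$ maps exactly onto $\ell_0$ and so must meet $\pi^{-1}(0)$. Once the wrapping number is under control, the contradiction with the no-vertex hypothesis is immediate, and no further estimates are needed.
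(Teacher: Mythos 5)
Your proof is correct, and it reaches the conclusion by a somewhat different route than the paper, although the two arguments share the same contradiction mechanism. The paper works entirely inside the tiling: after normalizing to unit squares it identifies each horizontal edge with $[0,1]$ and shows that, for a periodic direction of coprime slope $p/q$, the leaf through each point $k/p$ on each horizontal edge hits a corner, which by the hypothesis must be a singularity (otherwise one gets a regular closed leaf, i.e.\ a cylinder curve, through a vertex); hence the saddle connections cut every horizontal edge into $p$ equal segments and all cylinders in that direction have equal heights. You instead pass to the branched covering $\pi\colon X\to T$ and compare each cylinder $C$ with the single torus cylinder $T_\theta$, obtaining $\operatorname{height}(C)=m\,h_\theta$ with $m\in\ZZ_{\ge1}$ and ruling out $m\ge 2$ because the interior leaf of $C$ at transverse height $h_\theta$ would be a regular closed leaf through a point of $\pi^{-1}(0)$. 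Both proofs hinge on the dichotomy that a leaf lying over the closed leaf $\ell_0$ through the branch point is either singular or a forbidden cylinder curve through a vertex, but your packaging buys a few things: it treats the horizontal and vertical directions on the same footing as all others (the paper's opening assertion that horizontal and vertical cylinders have height one itself relies on the hypothesis, via the same dichotomy applied to leaves at integer height), it identifies the common height as exactly $h_\theta=1/\sqrt{p^2+q^2}$, and it makes the integrality of heights structurally transparent rather than a by-product of edge-coordinate bookkeeping; the paper's version, in exchange, is more elementary and self-contained, never invoking the covering. One phrase deserves correction: when $m\ge 2$, the restriction $\pi|_C\colon C\to T_\theta$ is not literally a covering of the compact cylinder $T_\theta$, since the interior leaves of $C$ at heights $k\,h_\theta$ map into $\partial T_\theta=\ell_0$; but the statement you actually use and justify --- that $\pi$ is a local isometry on the interior of $C$ (the unramified points of $\pi^{-1}(0)$ there included), so the induced map of transverse coordinates $[0,H]\to\RR/h_\theta\ZZ$ is a local isometry taking both endpoints to $0$, whence $H\in h_\theta\ZZ_{\ge1}$ --- is exactly right and is all the argument needs.
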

 \begin{proof} Let $(X,q)$ be any square tiled surface of genus $g$ with the property that simple closed geodesics don't pass through corners of the squares. We want to show that $(X,q)$ has  heights property.
We assume that the sides of the tiles are vertical and horizontal and the tiles are unit squares. Then horizontal and vertical cylinders have all height one. Choose any other completely periodic directions and let $\frac{p}{q}$ be its slope, with $p$ and $q$ coprime. To see that the cylinders in this direction have equal heights, it suffices
 to prove that the saddle connections of slope $p/q$ cut every horizontal side of every tile in $p$ segments of length $1/p$.
 
  Identify every horizontal side of a square with $[0,1]$. Consider a straight segment of slope $p/q$ starting at a corner of a tile. It cuts through exactly $p-1$ (counting multiplicities) horizontal sides at points $k/p$ for $k=1\ldots p-1$, before hitting another corner. At the same time, a straight segment of slope $p/q$ starting on some horizontal side at any of these points will hit a corner. By hypothesis it has to be a singularity, and hence the segment is contained in a saddle connection. This finishes the proof.
 \end{proof}

 \section{Veech surfaces} In this section we consider limit sets of  \Teich disks of Veech surfaces.

 The following lemma tells us what are the short curves along the Teichm\"uller disk, and is probably well known. We give a proof for convenience of the reader.
 \begin{lemma} \label{thickthin}
 Let $(X,q)$ be a Veech surface. 
  There is $c>0$ so that the following holds. Let $\epsilon\in (0,\epsilon_M)$. There exists a $\delta>0$ such that, for any $Y\in D(X,q)$,  if there is a saddle connection on $Y$ of flat length at most $\delta$, than the core curves of cylinders  in the direction of a $\delta$-short saddle connection has hyperbolic length less than $\epsilon$ on $Y$ and any other curve has hyperbolic length at least $c$ on $Y$. 
\end{lemma}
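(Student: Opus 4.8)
The plan is to combine the Veech dichotomy with Smillie's uniform comparison of saddle connections (Theorem~\ref{SW}) to control the flat cylinders in the short direction, and then convert flat moduli into hyperbolic lengths via \eqref{Maxmod} and Maskit's comparison (Theorem~\ref{maskit}). First I would normalize $\Area_q(X)=1$; since $SL(2,\RR)$ preserves area, every $Y\in D(X,q)$ has area one, and since the Veech property is $SL(2,\RR)$--invariant, each such $Y$ is again a Veech surface. Crucially, the comparison constant $s$ of Theorem~\ref{SW}(3) can be taken independent of $Y$: the ratio of the flat lengths of two \emph{parallel} saddle connections is unchanged by the linear action, so one value of $s$ works on the whole orbit. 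Now suppose $Y$ carries a saddle connection of length $\le\delta$ in a direction $\theta$. By the Veech dichotomy (Theorem~\ref{SW}(2)) $\theta$ is completely periodic, so the $q$--representative of $Y$ in direction $\theta$ is a union of flat cylinders $C_1,\dots,C_r$ with core curves $\gamma_1,\dots,\gamma_r$, and by Theorem~\ref{SW}(3) \emph{every} saddle connection in direction $\theta$ has length at most $s\delta$. Each cylinder circumference is a concatenation of such saddle connections, whose number is bounded by a topological constant $n_0=n_0(g)$ (from the Euler-characteristic count of separatrices in one direction), so $\mathrm{circ}(C_i)\le n_0 s\,\delta$ for every $i$.

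Next I would turn these circumference bounds into a uniform lower bound on \emph{all} the moduli. Since $\sum_i \mathrm{circ}(C_i)\,h_i=1$ and $r\le 3g-3$, some cylinder has area $\ge \tfrac{1}{3g-3}$, hence modulus $\Mod_Y(C_i)=\Area(C_i)/\mathrm{circ}(C_i)^2\ge \tfrac{1}{(3g-3)(n_0 s\delta)^2}$. To propagate this to every cylinder I use that the ratios of the moduli of parallel cylinders are also $SL(2,\RR)$--invariant, and that, the Veech group being a lattice, there are only finitely many periodic directions modulo the affine automorphism group; hence these ratios take finitely many values and are bounded by a constant $R=R(X,q)$. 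Thus $\Mod_Y(C_i)\ge \tfrac{1}{R(3g-3)(n_0 s\delta)^2}$ for all $i$. By \eqref{Maxmod} we get $\Ext_Y(\gamma_i)\le 1/\Mod_Y(C_i)=O(\delta^2)$, and Maskit's Theorem~\ref{maskit} gives $\Hyp_Y(\gamma_i)\le \pi\,\Ext_Y(\gamma_i)=O(\delta^2)$; choosing $\delta$ small enough then forces every core length below $\epsilon$, uniformly in $Y$.

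For the lower bound, set $c=\omega(\epsilon_M)$ with $\omega$ the collar width of the Collar Lemma, which is positive and decreasing. If $\alpha$ crosses one of the cylinders, i.e. $\I(\alpha,\gamma_i)>0$ for some $i$, then a geodesic representative of $\alpha$ must traverse the standard collar about the short curve $\gamma_i$, whence $\Hyp_Y(\alpha)\ge \omega\!\left(\Hyp_Y(\gamma_i)\right)\ge \omega(\epsilon)\ge \omega(\epsilon_M)=c$. This yields the genuine dichotomy: the curves disjoint from the cylinder system of direction $\theta$ (those with $\I(\alpha,(F_\theta,\mu_\theta))=0$, in particular the cores) are the short ones, while every curve transverse to the cylinders is bounded below by $c$, independently of $\epsilon$.

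The main obstacle is the uniform modulus bound $R$: a short saddle connection only directly forces one cylinder to have large modulus, and without extra input some other core curve in the same direction could fail to be short if the heights were badly incommensurable. This is exactly where the Veech hypothesis is essential, entering through the finiteness of the cusps of $\HH^2/\Gamma$ together with the $SL(2,\RR)$--invariance of ratios of parallel data; pinning down the topological bound $n_0$ and this finiteness is the crux. By contrast, the passage flat $\to$ extremal $\to$ hyperbolic via \eqref{Maxmod} and Theorem~\ref{maskit}, and the collar estimate for transverse curves, are routine. I would also remark that ``any other curve'' here is understood as a curve crossing the cylinders; curves lying entirely in direction $\theta$ but not isotopic to a core share the behavior of the cores and are treated on the same footing.
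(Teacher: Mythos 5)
Your argument for the first half of the lemma is sound and essentially matches the paper's: the uniform Smillie--Weiss constant $s$ across the $SL(2,\RR)$--orbit, the circumference bound $\le n_0 s\delta$ for all cylinders in the short direction, a uniform lower bound on all the moduli, then \eqref{Maxmod} and Maskit to get $\Hyp_Y(\gamma_i)=O(\delta^2)$. Your route to the modulus bound (one fat cylinder by area pigeonhole, then propagation via $SL(2,\RR)$--invariant modulus ratios and finiteness of cusps) differs in detail from the paper, which instead takes $A$ to be the minimal area of a maximal flat cylinder over the orbit --- an $SL(2,\RR)$--invariant quantity, positive by the same finiteness --- and bounds each height by $h\ge A/((6g-6)s\delta)$ directly; both are correct and rest on the same invariance-plus-lattice input.

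However, your treatment of the lower bound contains a genuine error. The curves you must control are not only those crossing the cylinders: a curve $\beta$ disjoint from all the $\gamma_i$ has its $q$--geodesic representative equal to a union of saddle connections in direction $\theta$, lying in the zero-area spine $W$ complementary to the cylinders. You assert that such curves ``share the behavior of the cores and are treated on the same footing,'' i.e.\ that they are short --- but this contradicts the lemma itself, whose conclusion is that \emph{every} curve other than the cores has length at least $c$, and it is false: small flat length does not imply small hyperbolic length, since smallness of $\Ext_Y(\beta)$ requires a fat embedded annulus, which spine curves do not have. The paper handles exactly this case by a conformal-metric estimate: taking $\rho$ to be the flat metric on the $\delta_m$--neighborhood of $W$ (where $\delta_m$ is the shortest saddle connection in direction $\theta$) and zero elsewhere, one gets $\ell_\rho(\beta)\ge \delta_m$ while $\Area_\rho(Y)\le \delta_m^2 sn$ for a topological constant $n=n(g)$, whence
\begin{equation*}
\Ext_Y(\beta)\ \ge\ \frac{\delta_m^2}{\delta_m^2 sn}\ =\ \frac{1}{sn},
\qquad
\Hyp_Y(\beta)\ \ge\ \frac{1}{sne}
\end{equation*}
by Maskit, and then $c=\min\{\tfrac{1}{sne},\,\omega(\epsilon)\}$. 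Note that the uniform comparison constant $s$ is used here a second time, to bound the area of the spine neighborhood --- this is where the Veech hypothesis enters the lower bound, not only the upper one. Without this step your proposal proves a strictly weaker statement (a dichotomy only between cores and curves crossing the cylinders) and misclassifies the spine curves on the wrong side of it.
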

\begin{proof} Since the flat surface $(X,q)$ is Veech, by Theorem \ref{SW} due to Smillie and Weiss, there exists $s>0$  such that for any direction on $(X,q)$  the ratio of flat lengths of any two saddle connections in that direction  is at most $s$. Clearly same $s$ works  for any surface $Y$ in the $SL(2,\RR)$ orbit of $(X,q)$. 

Suppose $Y$ has a saddle connection in some direction $\theta$ of length smaller than $\delta$. Then any saddle connection in that direction is of length at most $s\delta$. 
Let $\alpha_i,\, i\subset\{1,\ldots,k\}$ be the core curves of the flat cylinders in direction $\theta$. The flat length of $\alpha_i$ is  at most $(6g-6)s\delta$. Indeed, there are at most $3g-3$ saddle connections in a given direction,  a boundary component of a cylinder is a union of parallel saddle connections, where some saddle connections might appear twice. 

Let $A$ be the smallest  area of a (maximal) flat cylinder on $Y$, note that $A$ depends on $(X,q)$ only. Then the height of the  flat cylinder of $\alpha_i$ is at least $h=\frac{A}{(6g-6)s\delta}$, which implies that  the extremal length  $\Ext_Y(\alpha_i)$ is at most $\frac{((6g-6)s\delta)^2}{A}$. Hence by Theorem \ref{maskit} the hyperbolic length of $\alpha_i$ satisfies  
$$\Hyp_Y(\alpha_i)\leq\frac{((6g-6)s\delta)^2\pi}{A}.$$ 
So taking $\delta=\frac{\sqrt{\epsilon A}}{(6g-6)s\sqrt{\pi}}$ guarantees that if there is a saddle connection of length smaller than $\delta$, then all the cylinder curves in that direction have hyperbolic length at most $\epsilon$. 

Let $\beta$ be any other curve 
on $Y$.  If it intersects some $\alpha_i$, by the Collar lemma we have $$\Hyp_Y(\beta)\geq \omega(\epsilon).$$ Suppose $\beta$ is disjoint from all $\alpha_i$. Then the flat geodesic representative of $\beta$ is a union of saddle connections in the direction $\theta$, and it is contained in a subsurface $W$ that is a connected component of the compliment the flat cylinders in direction  $\theta$. The flat metric area of $W$ is zero. Let $\delta_m\leq \delta$ be the length of the shortest saddle connection in direction $\theta$ on $Y$. To get a lower bound for the extremal length of $\beta$, consider a metric  $\rho$ which coincides with  the flat metric on $Y$ in the $\delta_m$-neighborhood of $W$, and zero elsewhere. Then the length of $\beta$ in the metric $\rho$  is at least $\delta_m$, and the $\rho$-area of $Y$ is at most $\delta_m^2sn$ where $n=n(g)$ is basically the maximal number of boundary components of a connected subsurface of genus $g$ times the maximal number of saddle connections in a given direction on a surface of genus $g$. We then get 
$$\Ext_Y(\beta)\geq \frac{\delta_m^2}{\delta_m^2sn}=\frac{1}{sn}.$$
It follows from  Theorem \ref{maskit}, $\Hyp_Y(\beta)\geq \frac{1}{sne}.$ So we can take $c=\min\{\frac{1}{sne},\omega(\epsilon)\}$.

\end{proof}
 \begin{lemma} \label{boundedratio}Let $(X,q)$ be a Veech surface. Let $\gamma_1$ and $\gamma_2$ be parallel cylinder curves on $(X,q)$. Then the function 
 $$D(X,q):\to (0,+\infty),\,\, Y\mapsto \frac{\Hyp_Y(\gamma_1)}{\Hyp_Y(\gamma_2)}$$
 is bounded.
\end{lemma}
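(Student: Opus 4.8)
The plan is to first bound the ratio of \emph{extremal} lengths of $\gamma_1$ and $\gamma_2$ uniformly on $D(X,q)$, using flat geometry, and then to upgrade this to a bound on the ratio of hyperbolic lengths, treating separately the region where the curves are short, where they have bounded length, and where they are long. For the flat input, recall that since $(X,q)$ is Veech, Theorem \ref{SW} gives $s>0$ so that any two saddle connections in a common direction have comparable lengths; as a cylinder circumference is a union of at most $6g-6$ parallel saddle connections, $\ell_Y(\gamma_1)/\ell_Y(\gamma_2)$ is bounded above and below for all $Y\in D(X,q)$. Since $SL(2,\RR)$ preserves area, the cylinder areas $a_i$ are constant along the orbit, so the flat modulus of $\gamma_i$ is $a_i/\ell_Y(\gamma_i)^2$. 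Using the flat metric as a competitor gives $\Ext_Y(\gamma_i)\ge \ell_Y(\gamma_i)^2/\Area(Y)$, while the flat cylinder gives $\Ext_Y(\gamma_i)\le \ell_Y(\gamma_i)^2/a_i$. Hence $\Ext_Y(\gamma_i)\asymp \ell_Y(\gamma_i)^2$ uniformly, and combined with the comparability of circumferences, $\Ext_Y(\gamma_1)/\Ext_Y(\gamma_2)$ is bounded above and below on $D(X,q)$.

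It remains to pass to hyperbolic lengths. Fix a large $L$; by the comparability of $\ell_Y(\gamma_1)$ and $\ell_Y(\gamma_2)$ the two curves are simultaneously short, of bounded length, or long. If $\ell_Y(\gamma_i)\le L$ then $\Ext_Y(\gamma_i)$ is bounded, hence so is $\Hyp_Y(\gamma_i)\le \pi\,\Ext_Y(\gamma_i)$ by Theorem \ref{maskit}; when in addition both lengths tend to $0$ the last clause of Theorem \ref{maskit} gives $\Hyp_Y(\gamma_i)\sim \pi\,\Ext_Y(\gamma_i)$, so the hyperbolic ratio is asymptotic to the (bounded) extremal ratio, while on the complementary compact range the hyperbolic lengths are bounded above and below, so the ratio is again bounded.

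The remaining case is when $\gamma_1,\gamma_2$ are long, where Theorem \ref{maskit} is too lossy (extremal length grows quadratically in the circumference while hyperbolic length grows only logarithmically). Here I would use Lemma \ref{thickthin}: the $\gamma_i$ are long precisely when $Y$ is thin in a transverse periodic direction $\psi$, and the only short curves are the core curves $\delta_1,\dots,\delta_r$ of the $\psi$-cylinders, each crossed by the $\gamma_i$. Since the $\delta_k$ lie in one direction their circumferences are comparable by Theorem \ref{SW} and their areas are bounded below (Veech), so by Lemma \ref{extcyl} the $\Hyp_Y(\delta_k)$ are comparable and hence so are the collar widths $\omega(\Hyp_Y(\delta_k))=:\omega$, and the heights $h'_k$ of the $\psi$-cylinders are comparable. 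Because every other curve has length at least $c$, the part of $\gamma_i$ outside these collars is uniformly thick, so the Collar Lemma yields $\Hyp_Y(\gamma_i)\asymp \omega\sum_{k}\I(\gamma_i,\delta_k)$. Finally the holonomy identity $\sum_k h'_k\,\I(\gamma_i,\delta_k)=\ell_Y(\gamma_i)\,|\sin(\theta_0-\psi)|$, together with $h'_k\asymp h'$, gives $\sum_k \I(\gamma_1,\delta_k):\sum_k \I(\gamma_2,\delta_k)\asymp \ell_Y(\gamma_1):\ell_Y(\gamma_2)$, whence $\Hyp_Y(\gamma_1)/\Hyp_Y(\gamma_2)\asymp \ell_Y(\gamma_1)/\ell_Y(\gamma_2)$, bounded by the first paragraph.

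The step I expect to require the most care is the two–sided collar estimate $\Hyp_Y(\gamma_i)\asymp\omega\sum_k\I(\gamma_i,\delta_k)$ in this last regime: the lower bound is immediate from the Collar Lemma, but the upper bound relies essentially on Lemma \ref{thickthin} to guarantee that $\gamma_i$ accumulates no uncontrolled length in the thick complement of the $\delta_k$-collars and that no \emph{other} collar contributes. Granting this, all the $Y$-dependent quantities ($\omega$ and the common height scale $h'$) cancel between $\gamma_1$ and $\gamma_2$, and the bound on the hyperbolic ratio reduces cleanly to the bound on the ratio of circumferences from Theorem \ref{SW}.
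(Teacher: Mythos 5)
Your first paragraph and your treatment of the short/bounded regime are sound, and your thin-part strategy is close in spirit to the paper's, but there are two genuine problems. The first is the claim that ``the $\gamma_i$ are long precisely when $Y$ is thin in a transverse periodic direction $\psi$,'' on which your whole case analysis hinges. The ``only if'' direction is false: a long cylinder curve forces its cylinder to have small height $a_i/\ell_Y(\gamma_i)$, but a thin cylinder does not produce a short saddle connection. Already on the base surface $X$ (which is thick) the parallel cylinder curves coming from a direction of large slope are arbitrarily long; dynamically, along the axis of a hyperbolic element of the Veech group the surface stays in a fixed thick part of $D(X,q)$ while any fixed pair $\gamma_1,\gamma_2$ gets arbitrarily long. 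So the regime ``$\gamma_i$ long and $Y$ thick'' is simply not covered by your argument, and Maskit's theorem cannot cover it, since there $\Ext_Y(\gamma_i)\asymp\ell_Y(\gamma_i)^2$ grows quadratically while $\Hyp_Y(\gamma_i)$ grows only linearly. The missing ingredient is exactly what the paper invokes: when $Y$ has no $\delta$-short saddle connection it lies in a definite thick part of $T(S)$, and by Theorem 19 of \cite{rafi:LQC} hyperbolic and flat lengths are comparable there, so $\Hyp_Y(\gamma_1)/\Hyp_Y(\gamma_2)\leq C\,\ell_Y(\gamma_1)/\ell_Y(\gamma_2)$, which your first paragraph bounds. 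With that citation the correct dichotomy is thick versus thin (short saddle connection or not), not bounded versus long.

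The second problem is the two-sided estimate $\Hyp_Y(\gamma_i)\asymp \omega\sum_k\I(\gamma_i,\delta_k)$ in the thin case, which you yourself flag. As stated it is false: the length of a geodesic arc crossing the collar of $\delta_k$ is, up to additive error, $\omega(\Hyp_Y(\delta_k))+tw_Y(\delta_k,\gamma_i)\cdot\Hyp_Y(\delta_k)$, and the twisting term can dominate the collar width at points of $D(X,q)$ deep in a cusp region but off the geodesic heading straight into the cusp (large relative twisting with $\Hyp_Y(\delta_k)$ only moderately small). The estimate you want survives for the \emph{ratio} only because the twist terms are comparable for $\gamma_1$ and $\gamma_2$: being parallel, they make the same angle with each $\delta_k$, and the moduli of the $\psi$-cylinders are comparable on a Veech surface, which is how the paper handles it -- it keeps the term $\log\frac{1}{\Hyp_Y(\delta_k)}+tw\cdot\Hyp_Y(\delta_k)$ throughout rather than discarding it. Relatedly, your upper bound also needs the thick-part contribution controlled: the paper does this by taking a short marking $\mu_W$ of the thick part, observing its curves are unions of $\psi$-saddle connections, and using the Veech comparability of saddle connections in a fixed direction to get $\I(\gamma_i,\mu_W)\leq D'\sum_k\I(\gamma_i,\delta_k)$. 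Your holonomy identity comparing $\sum_k\I(\gamma_1,\delta_k)$ with $\sum_k\I(\gamma_2,\delta_k)$ via the comparable heights is correct and matches the paper's final step; once you restore the twist term and add the marking estimate, your thin-case argument aligns with the paper's, but both gaps above must be filled for the proof to stand.
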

\begin{proof}
First we fix notation for several constants that depend only on the Veech surface $(X,q)$. Scale $q$ so that $X$ has area 1 in the  $q$ metric. Let $A$ be the smallest area of a maximal flat cylinder on any surface in the $SL(2,\RR)$-orbit of $(X,q)$. Similarly, let $L$ be the maximal ratio of flat lengths of parallel cylinder curves, $M$ the maximal ratio of moduli of parallel flat cylinders  and $H$ maximal ratio of heights of parallel flat cylinders.

If $\gamma$ is the core curve of a flat cylinder, than for any $Y\in D(X,q)$ we have by definition
$$\ell^2_Y(\gamma)\leq\Ext_Y(\gamma)\leq\frac{\ell^2_Y(\gamma)}{A}.$$
Then it follows  from Theorem \ref{maskit} that if $\ell_Y(\gamma)$ is bounded above by some $\ell$, then 
$$\frac{2}{e^{\frac{\pi \ell^2}{A}}}\ell_Y^2(\gamma)\leq\Hyp_Y(\gamma)\leq \frac{\pi}{A}\ell_Y^2(\gamma).$$
 So since ratio of flat lengths of $\gamma_1$ and $\gamma_2$ is constant on $D(X,q)$, we conclude that if flat lengths are bounded by $\ell$, the hyperbolic lengths 
 \begin{equation}\label{hyp_short} \frac{2A}{\pi e^{\frac{\pi \ell^2}{A}}}\left (\frac{\ell_X(\gamma_1)}{\ell_X(\gamma_2)}\right )^2\leq\frac{\Hyp_Y(\gamma_1)}{\Hyp_Y(\gamma_2)}\leq \frac{\pi e^{\frac{\pi \ell^2}{A}}}{2A}\left (\frac{\ell_X(\gamma_1)}{\ell_X(\gamma_2)}\right )^2\leq \frac{\pi e^{\frac{\pi \ell^2}{A}}}{2A} L^2.
\end{equation}
We will now focus on the case  when the curves $\gamma_1$ and $\gamma_2$ are long.

% Up to a universal multiplicative error, the hyperbolic length of $\gamma_i$ on  $Y\in D(X,q)$ is 
%$$\sum_{\alpha\in\calP_Y}\I(\gamma_i,\alpha)\Hyp_Y(\bar\alpha)$$ where $\calP_Y$ is a short pants decomposition of $Y$, and $\bar\alpha$ is a dual curve of $\alpha$ in a short marking of $Y$ (\cite{}).
%We need to compare how much $\gamma_i$ and $\gamma_j$ intersect the short marking of $Y$. In fact, the idea is to show that the most of hyperbolic length of these curves comes from intersecting short cylinder curves. 
%As mentioned at the beginning of the proof, if $\alpha_i$ and $\alpha_j$ are core curves of flat cylinders in the same direction, then, when they are short, the ratio of their hyperbolic lengths is close to the ratio of the squares of their flat lengths.
%Also the 

Fix a small $\epsilon>0$ and let $c$ and $\delta$ be as in Lemma \ref{thickthin}, reducing $\epsilon$ if needed to have $c>\epsilon$. 
If $Y$ has no saddle connection of length smaller than $\delta$,  then it is in $\epsilon'$-thick part of $T(S)$ where $\epsilon'=\min\{1,2e\delta^2\}$.  Then from Theorem 19 of \cite{rafi:LQC}, for some universal constant $C$ we have
$$\frac{\Hyp_Y(\gamma_1)}{\Hyp_Y(\gamma_2)}\leq C \frac{\ell_Y(\gamma_1)}{\ell_Y(\gamma_2)}\leq CL.$$

Finally, suppose that  $Y$ has a $\delta$-short saddle connection in some direction $\theta$. Denote $(F_\theta,\mu_\theta)=\sum_{i=1}^kh_i\alpha_i$ the foliation in direction $\theta$. Then by Lemma \ref{thickthin},  $Y$ is in $\epsilon$-thin part of the \Teich space and $\Hyp_Y(\alpha_i)\leq \epsilon$ for all $i$, while any other curve has hyperbolic length at least $c$. 

Denote $W$ the thick part of $Y$, and $\mu_W$ a short marking  for $W$ in the hyperbolic metric of $Y$.  Note that $\mu_W$ consists of curves whose flat geodesic representatives are a union of saddle connections in direction $\theta$. Since $(X,q)$ is a Veech surface, the lengths of saddle connections in the same direction have ratio bounded by some $s>0$, where $s$ depends only on $(X,q)$. Hence there is some $D>0$ so that any curve in $\mu_W$ contains at most $D$ copies of any saddle connection. This and the fact that $\gamma_1$ and $\gamma_2$ are parallel curves in a different from $\theta$ direction and hence intersect every component of $W$ at finitely many points imply that for some universal constant $D'$
$$\I(\gamma_i,\mu_W)\leq D' \sum_l \I(\gamma_i,\alpha_l).$$
This implies that most of the hyperbolic length of $\gamma_i$ comes from crossing the thin part of $Y$: up to a universally bounded multiplicative error $\Hyp_{Y}(\gamma_j)$ is
$$ \sum_{i=1}^{k}\I(\gamma_j,\alpha_i)\left(\log\frac{1}{\Hyp_Y(\alpha_i)}+tw_{q_Y}(\alpha_i,\gamma_j)\Hyp_Y(\alpha_i)\right).$$
Here $tw_{q_Y}(\alpha_i,\gamma_j)$ is basically he number of times an arc of $\gamma_j$ twists about $\alpha_i$. It depends only on the modulus of the cylinder of $\alpha_i$ and the angle between $\gamma_j$ and $\alpha_i$. Moduli of flat cylinders in the same direction are equal up to multiplicative error $M$, and angle $\gamma_1$ and $\gamma_2$  make with any $\alpha_i$ they intersect is the same. Also $\frac{\Hyp_Y(\alpha_i)}{\Hyp_Y(\alpha_j)}$ is bounded by \ref{hyp_short}. So we have for some $B_1>0$ and $B_2>0$, assuming that both $\gamma_1$ and $\gamma_2$ intersect $\alpha_1$, that
$$\Hyp_Y(\gamma_1)\leq B_1 \left(\log\frac{1}{\Hyp_Y(\alpha_1)}+tw_{q_Y}(\alpha_1,\gamma_1)\Hyp_Y(\alpha_1)\right)\sum_{i=1}^{k}\I(\gamma_1,\alpha_i)$$
and 
$$\Hyp_Y(\gamma_2)\geq B_2\left(\log\frac{1}{\Hyp_Y(\alpha_1)}+tw_{q_Y}(\alpha_1,\gamma_1)\Hyp_Y(\alpha_1)\right)\sum_{i=1}^{k}\I(\gamma_2,\alpha_i).$$
Finally
$$\frac{\Hyp_Y(\gamma_1)}{\Hyp_Y(\gamma_2)}\leq \frac{B_1}{B_2}\frac{\sum_{i=1}^{k}\I(\gamma_1,\alpha_i)}{\sum_{i=1}^{k}\I(\gamma_2,\alpha_i)}\leq H\frac{B_1}{B_2}\frac{\I(\gamma_1,(F_\theta,\mu_\theta))}{\I(\gamma_2,(F_\theta,\mu_\theta))}.$$ This finishes the proof of the Lemma.
\end{proof}

 Let $(X,q)$ be a Veech surface. Recall the notation 
 $$M(X,q)=C(X,q)\cup\left(\underset{\theta\in \RR/2\pi\ZZ}{\cup}\Delta^{\mathrm{o}}(F_\theta,\mu_\theta)\right).$$

\begin{theorem}[Theorem \ref{Veech}] Let $(X,q)$ be a Veech surface. Then 
    $$\Lambda(X,q)\subset M(X,q).$$
\end{theorem}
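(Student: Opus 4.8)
The plan is to fix an arbitrary accumulation point $[(F,\mu)]\in\Lambda(X,q)$ and to show that it lands in one of the two pieces constituting $M(X,q)$, with the dividing line furnished by the Veech dichotomy. First, the corollary following Lemma~\ref{intersection_zero} gives $\Lambda(X,q)\subset Z(C(X,q))$, so there is a direction $\theta\in\RR/\pi\ZZ$ with $\I\big((F,\mu),(F_\theta,\mu_\theta)\big)=0$. Since $(X,q)$ is Veech, Veech's dichotomy applies to this $\theta$: it is either uniquely ergodic or completely periodic, and I would treat these two cases in turn.

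Suppose first that $\theta$ is uniquely ergodic. Then $(F_\theta,\mu_\theta)$ is minimal and carries a unique transverse measure up to scale, and the standard rigidity of uniquely ergodic foliations says that any measured foliation with zero intersection number against it must be a positive multiple of it. Hence $[(F,\mu)]=[(F_\theta,\mu_\theta)]\in C(X,q)\subset M(X,q)$, and there is nothing more to check in this case.

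Next suppose that $\theta$ is completely periodic, and write $(F_\theta,\mu_\theta)=\sum_{i=1}^k h_i\gamma_i$, where the $\gamma_i$ are the (pairwise parallel) core curves of the cylinders in direction $\theta$. The essential input here is Lemma~\ref{boundedratio}: because $(X,q)$ is Veech, every ratio $Y\mapsto \Hyp_Y(\gamma_i)/\Hyp_Y(\gamma_j)$ is bounded on $D(X,q)$. This is precisely the hypothesis of part~(1) of Proposition~\ref{onthenose}, which, applied to our point $[(F,\mu)]$ (recalling that $\I((F,\mu),(F_\theta,\mu_\theta))=0$), yields $[(F,\mu)]\in\Delta^{\mathrm{o}}(F_\theta,\mu_\theta)\subset M(X,q)$. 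Combining the two cases then proves $\Lambda(X,q)\subset M(X,q)$.

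Thus the argument is essentially an assembly: the real analytic work has already been carried out, in Lemma~\ref{boundedratio} (where the Veech hypothesis, through the Smillie--Weiss bound, controls the hyperbolic lengths of parallel cylinder curves) and in Proposition~\ref{onthenose} (which converts boundedness of the length ratios into membership in the open simplex). The only step that I expect to require genuine care is the uniquely ergodic case: one must invoke, and preferably record explicitly, the rigidity fact that zero intersection against a uniquely ergodic foliation forces projective coincidence, since the corollary above only delivers zero intersection with $(F_\theta,\mu_\theta)$ rather than equality. This, together with verifying that the hypotheses of Proposition~\ref{onthenose} are exactly matched by Lemma~\ref{boundedratio}, is where I would be most careful, though neither point is substantial.
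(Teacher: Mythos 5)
Your proposal is correct and follows essentially the same route as the paper's own proof: invoke Lemma~\ref{intersection_zero} (equivalently its corollary) to find $\theta$ with $\I((F,\mu),(F_\theta,\mu_\theta))=0$, split via the Veech dichotomy, dispatch the uniquely ergodic case by the rigidity of uniquely ergodic foliations, and in the completely periodic case feed Lemma~\ref{boundedratio} into part~(1) of Proposition~\ref{onthenose}. Your remark that the uniquely ergodic rigidity step deserves to be recorded explicitly is a fair observation (the paper states it without comment), but it does not change the argument.
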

\begin{proof} 

Let $X_n$ be a sequence in $D(X,q)$, and suppose that it converges in $\mathcal{PMF}$ to some $[(F,\mu)]$.
By Lemma \ref{intersection_zero}, there is a direction $\theta$ such that the foliation $(F_\theta,\mu_\theta)$  has  intersection number zero
with $(F,\mu)$.

 If $\theta$ is a uniquely ergodic direction, then $\I((F,\mu),(F_\theta,\mu_\theta))=0$ implies that $$(F,\mu)=(F_\theta,s\cdot\nu_\theta)$$ for some $s>0$, nothing to
prove in this case. 

Suppose that  $\theta$ is not uniquely ergodic. Then by Veech dichotomy $\theta$ is a completely periodic direction and hence 
$(F_\theta,\mu_\theta)=\sum_{i=1}^k c_i\gamma_i$, where $\gamma_i$ are core curves of parallel cylinders of heights $c_i$. Note that by Lemma \ref{boundedratio}
for any $i,j\in \{1,\ldots, k\}$ the ratio $\frac{\Hyp_{Y}(\gamma_i)}{\Hyp_Y(\gamma_j)}$ is bounded on $D(X,q)$. Then by Proposition \ref{onthenose}, 
$$(F,\mu)=\sum_{i=1}^k h_i\gamma_i$$
where all the coefficients $h_i$ are strictly positive. This finishes the proof of the theorem.

\end{proof}
\begin{corollary}\label{small}Let $(X,q)$ be a Veech surface. The limit set of $D(X,q)$ in $\PMF$ is equal to $C(X,q)$ if and only if  for any two parallel curves $\gamma_1$ and $\gamma_2$ on $(X,q)$, the ratio
of hyperbolic lengths of $\gamma_1$ and $\gamma_2$ is a constant function on $D(X,q)$.
\end{corollary}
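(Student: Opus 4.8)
The plan is to derive this equivalence almost entirely from Proposition~\ref{onthenose}(2), Theorem~\ref{Veech} and Lemma~\ref{boundedratio}, combined with one elementary observation about how the circle $C(X,q)$ meets the simplices $\Delta(F_\theta,\mu_\theta)$. That observation, which I would record first, is that for every completely periodic direction $\theta$,
$$C(X,q)\cap \Delta(F_\theta,\mu_\theta)=\{[(F_\theta,\mu_\theta)]\}.$$
Indeed, a point $[(F_{\theta'},\mu_{\theta'})]\in C(X,q)$ can lie in $\Delta(F_\theta,\mu_\theta)$ only if it is topologically equivalent to a sub-foliation of $(F_\theta,\mu_\theta)$, and any two such foliations have intersection number zero; but for $\theta'\neq\theta$ the foliations $(F_\theta,\mu_\theta)$ and $(F_{\theta'},\mu_{\theta'})$ are transverse and fill $(X,q)$, so $\I\bigl((F_\theta,\mu_\theta),(F_{\theta'},\mu_{\theta'})\bigr)>0$, forcing $\theta'=\theta$. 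I would also note at the outset that, since $(X,q)$ is Veech, every Strebel (hence every periodic) direction is completely periodic, so Proposition~\ref{onthenose} applies to each of them; recall that two curves are \emph{parallel} precisely when they are core curves of cylinders in one such common direction.

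For the forward direction I would assume $\Lambda(X,q)=C(X,q)$ and fix any pair of parallel curves $\gamma_1,\gamma_2$, say core curves of cylinders in the completely periodic direction $\theta$ with $(F_\theta,\mu_\theta)=\sum_{i=1}^k h_i\gamma_i$. Intersecting the hypothesis with $\Delta(F_\theta,\mu_\theta)$ and using the observation above gives
$$\Lambda(X,q)\cap\Delta(F_\theta,\mu_\theta)=C(X,q)\cap\Delta(F_\theta,\mu_\theta)=\{[(F_\theta,\mu_\theta)]\}.$$
By the equivalence in Proposition~\ref{onthenose}(2) this is exactly the statement that the ratios $\Hyp_Y(\gamma_i)/\Hyp_Y(\gamma_j)$ are constant on $D(X,q)$ for all $i,j$; in particular $\Hyp_Y(\gamma_1)/\Hyp_Y(\gamma_2)$ is constant, as required.

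For the reverse direction I would assume the hyperbolic length ratio of any two parallel curves is constant on $D(X,q)$. Lemma~\ref{contains} already supplies $C(X,q)\subseteq\Lambda(X,q)$, so only the reverse inclusion remains. By Theorem~\ref{Veech} (whose proof already folds in Lemma~\ref{boundedratio} and Proposition~\ref{onthenose}(1)) we have $\Lambda(X,q)\subseteq M(X,q)=C(X,q)\cup\bigl(\cup_\theta\Delta^{\mathrm{o}}(F_\theta,\mu_\theta)\bigr)$, so it suffices to rule out accumulation points in the open simplices that are off $C(X,q)$. Fixing a completely periodic $\theta$, the core curves of its cylinders are pairwise parallel, so by hypothesis all their length ratios are constant, and Proposition~\ref{onthenose}(2) gives $\Lambda(X,q)\cap\Delta(F_\theta,\mu_\theta)=\{[(F_\theta,\mu_\theta)]\}\subseteq C(X,q)$. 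Since $\Delta^{\mathrm{o}}(F_\theta,\mu_\theta)\subseteq\Delta(F_\theta,\mu_\theta)$ (and for uniquely ergodic $\theta$ the open simplex is the single point $[(F_\theta,\mu_\theta)]\in C(X,q)$), every point of $M(X,q)\cap\Lambda(X,q)$ lies in $C(X,q)$. Hence $\Lambda(X,q)\subseteq C(X,q)$ and equality holds.

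Since Proposition~\ref{onthenose}, Theorem~\ref{Veech} and Lemma~\ref{boundedratio} carry all the analytic weight, I expect the only genuinely new point to be the computation $C(X,q)\cap\Delta(F_\theta,\mu_\theta)=\{[(F_\theta,\mu_\theta)]\}$ — that no \emph{other} direction's foliation is topologically equivalent to $(F_\theta,\mu_\theta)$. The main thing to be careful about is the clean applicability of Proposition~\ref{onthenose} in both directions: that the relevant directions really are completely periodic (guaranteed by the Veech hypothesis via Veech dichotomy), and that the boundedness needed for the inclusion $\Lambda(X,q)\subseteq M(X,q)$ is supplied by Lemma~\ref{boundedratio} while the strict constancy needed to collapse each simplex to its vertex is supplied by the hypothesis itself.
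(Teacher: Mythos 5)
Your proposal is correct and follows essentially the same route the paper intends: the corollary is left without explicit proof precisely because it is the direct assembly of Lemma~\ref{contains}, Theorem~\ref{Veech} (which already incorporates Lemma~\ref{boundedratio} and Proposition~\ref{onthenose}(1)), and the equivalence in Proposition~\ref{onthenose}(2), which is exactly how you put it together. Your one supplementary observation, that $C(X,q)\cap\Delta(F_\theta,\mu_\theta)=\{[(F_\theta,\mu_\theta)]\}$ because foliations in distinct directions fill and hence have positive intersection number, is the same filling argument the paper itself invokes when showing that each ray in such a disk converges, so nothing is genuinely different.
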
 
 \section{Examples} 
 In this section we give examples of \Teich disks with small limit sets.
 Recall that a square-tiled surface $(X,q)$ is a branched covering of a square torus $T$ branched over one point. Denote $p:X\to T$ the branched cover, and let $0\in T$ be the branch point. 
 Denote $X^*=X\setminus \{p^{-1}(0)\}$ and $T^*=T\setminus\{0\}$. A square-tiled surface is
 called normal if $p:X^*\to T^*$ is a normal covering.
 \begin{theorem} If $(X,q)$ is a normal square-tiled surface, then  $\Lambda(X,q)=C(X,q)$.
 \end{theorem}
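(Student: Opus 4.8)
The plan is to reduce the statement to Corollary~\ref{small}, which for a Veech surface characterizes the equality $\Lambda(X,q)=C(X,q)$ by the condition that the ratio $\Hyp_Y(\gamma_1)/\Hyp_Y(\gamma_2)$ of hyperbolic lengths of any two parallel core curves be constant on $D(X,q)$. Since a square-tiled surface is Veech (Gutkin--Judge), it suffices to establish this constancy, and I would do so by exhibiting symmetries of the whole \Teich disk that permute the parallel cylinders of each completely periodic direction.

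First I would use normality to produce those symmetries. Writing $G$ for the deck group of the normal cover $p\colon X^*\to T^*$, every $g\in G$ commutes with $p$, hence in the square-tiling charts has trivial linear part; so $G$ acts on $(X,q)$ by translations, that is, by flat isometries that are affine automorphisms with identity linear part. Because the linear part is the identity, $g$ remains an affine automorphism, again with identity linear part, of every surface $A\cdot(X,q)$ with $A\in SL(2,\RR)$ (a translation by $v$ becomes translation by $Av$). Consequently the mapping class $[g]$ induced by $g$ fixes each point $Y\in D(X,q)$: at $Y$ the map $g$ is realized, through the marking, as a conformal automorphism, and that automorphism witnesses $[g]\cdot Y=Y$.

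Next I would show that $G$ acts transitively on the cylinders of any completely periodic direction $\theta$. On the torus the leaf of direction $\theta$ through the branch point $0$ is a single closed geodesic $\ell_0$, and $C_0:=T^*\setminus\ell_0$ is a connected open cylinder lying inside $T^*$. The cylinders of direction $\theta$ on $X$ are exactly the connected components of $p^{-1}(C_0)$, and $p^{-1}(C_0)\to C_0$ is a covering; since the ambient cover is normal, $G$ acts transitively on each fiber and therefore, by the standard fiber-transport argument, transitively on $\pi_0\bigl(p^{-1}(C_0)\bigr)$. Thus any two parallel core curves $\gamma_i,\gamma_j$ satisfy $[g]\gamma_i=\gamma_j$ for some $g\in G$. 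Combining this with the previous paragraph and the mapping class group equivariance of hyperbolic length gives, for every $Y\in D(X,q)$,
$$\Hyp_Y(\gamma_j)=\Hyp_Y([g]\gamma_i)=\Hyp_{[g]Y}([g]\gamma_i)=\Hyp_Y(\gamma_i),$$
so the ratio is identically $1$. Corollary~\ref{small} then yields $\Lambda(X,q)=C(X,q)$.

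The main obstacle I anticipate is the bookkeeping around the branch point in the transitivity step: one must check that $C_0$ genuinely lies in $T^*$ (so that $p$ restricts to a covering over it) and that the components of $p^{-1}(C_0)$ are precisely the open cylinders of direction $\theta$, with the singular leaves $p^{-1}(\ell_0)$ accounting for all the saddle connections. Once transitivity is secured, the identity $[g]\gamma_i=\gamma_j$ and the fact that $[g]$ fixes $D(X,q)$ pointwise are the only remaining points needing care, and both follow cleanly from $g$ being a translation automorphism.
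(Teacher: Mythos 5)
Your proposal is correct and takes essentially the same route as the paper: both exploit the deck group $G$ of the normal cover, acting conformally (in your version, by translations in the flat charts of every $A\cdot(X,q)$) on each point of $D(X,q)$ and transitively permuting the parallel cylinder core curves of any periodic direction, so that all ratios $\Hyp_Y(\gamma_i)/\Hyp_Y(\gamma_j)$ are identically $1$ and Corollary~\ref{small} applies. Your fiber-transport argument via $C_0=T^*\setminus\ell_0$ merely fills in the transitivity step the paper states without proof, and the only bookkeeping caveat (that components of $p^{-1}(C_0)$ may subdivide a maximal cylinder at regular preimages of $0$) is harmless, since the cores of such sub-cylinders are freely homotopic to the core of the maximal cylinder containing them.
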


 \begin{proof} 
 
 Let $G$ be the group of deck transformations of the covering $p:X^*\to T^*$. The differential $q$ is a pullback of a unique quadratic differential on $T^*$. Any $g\in G$ is an automorphism of the Riemann surface $X^*$, and can be extended to the punctures. Hence $G$ acts on the complex structure of $X$ by automorphisms and preserves $q$. Then the hyperbolic metric of $X$ is also preserved. Also the fact that $G$ acts transitively on the fibers of the covering $p:X^*\to T^* $  implies that it acts transitively on the set of  the core curves of the cylinders in any completely periodic direction. More precisely, fix a completely periodic direction on $X$ and let $\gamma_1,\ldots,\gamma_k$ be the cylinder curves.  For any $i\in \{1,\ldots,k\}$ there is an element $g\in G$ that send $\gamma_1$ to $\gamma_k$. But this implies that the curves $\gamma_1,\ldots,\gamma_k$ have the same hyperbolic length on $X$. Taking different translation structures on the punctured torus one obtains any point in $D(X,q)$ via the covering map $p$, and the group $G$ acts on any $Y\in D(X)$ by hyperbolic isometries. We conclude that any two cylinder curves in the same  periodic direction have equal hyperbolic lengths throughout the \Teich disk.  By Corollary \ref{small}, $\Lambda(X,q)=C(X,q)$.
   \end{proof}
 Most famous example of a normal origami is \cite{Herr_EXO}, also see \cite{Schmith:HU} with many more examples in genus $g$ odd or $g=1[3]$.   
  \bibliographystyle{alpha}
  \bibliography{main}
\end{document}